\newenvironment{customlegend}[1][]{%
	\begingroup
	\csname pgfplots@init@cleared@structures\endcsname
	\pgfplotsset{#1}%
}{%
\csname pgfplots@createlegend\endcsname
\endgroup
}%
\def\addlegendimage{\csname pgfplots@addlegendimage\endcsname}
\theoremstyle{plain}
\newtheorem{definition}{Definition}[section]
\newtheorem{theorem}[definition]{Theorem}
\newtheorem{corollary}[definition]{Corollary}
\newtheorem{lemma}[definition]{Lemma}
\newtheorem{claim}[definition]{Claim}
\definecolor{myred}{RGB}{0,0,0}
\definecolor{mygreen}{RGB}{0,0,0}
\definecolor{myblue}{RGB}{0,0,0}
\newcommand{\red}[1]{\textcolor{myred}{#1}}
\newcommand{\green}[1]{\textcolor{mygreen}{#1}}
\newcommand{\blue}[1]{\textcolor{myblue}{#1}}
\newcommand{\cp}[3]{\bibot{\red{#1}|\green{#2}|\blue{#3}}}
\newcommand{\cpt}[3]{\bitop{\red{#1}|\green{#2}|\blue{#3}}}
\newcommand{\cps}[3]{|\cp{#1}{#2}{#3}|}
\newcommand{\R}{\red{R}}
\newcommand{\rp}{\red{R'}}
\newcommand{\rone}{\red{R_1}}
\newcommand{\rtwo}{\red{R_2}}
\newcommand{\B}{\blue{B}}
\newcommand{\bp}{\blue{B'}}
\newcommand{\bone}{\blue{B_1}}
\newcommand{\btwo}{\blue{B_2}}
\newcommand{\gone}{\green{G_1}}
\newcommand{\gtwo}{\green{G_2}}
\newcommand{\connected}{}
\newcommand{\kn}{K_{n,n}}
\newcommand{\bitop}[1]{\ensuremath{\overline{{#1}}}}
\newcommand{\bibot}[1]{\ensuremath{\uline{{#1}}}}
\newcommand{\eps}{\epsilon}
\newcommand{\comment}[1]{}
\renewcommand{\subset}{\subseteq}
\newcommand{\sm}{\setminus}
\title{Almost partitioning a 3-edge-coloured $K_{n,n}$ into 5 monochromatic cycles}
\author{Richard Lang\thanks{Department of Mathematical Engineering, University of Chile, Santiago, Chile. The first and the last author are supported by Millennium Nucleus Information and Coordination in Networks, and the last author is also supported by Fondecyt Regular no.~1140766.}, Oliver Schaudt\thanks{Institute for Informatics, University of Cologne, Cologne, Germany.}, Maya Stein\footnotemark[1]}
\begin{document}

\maketitle

\begin{abstract}
	\noindent 	
	We show that for  any colouring  of the edges of the complete bipartite graph $K_{n,n}$ with 3 colours there are 5 disjoint monochromatic cycles which together cover all but $o(n)$ of the vertices. In the same situation, 18 disjoint monochromatic cycles together cover all vertices.
\end{abstract}

\noindent \textbf{Keywords:} Monochromatic cycle partition, Ramsey-type problem, complete bipartite graph

\noindent \textbf{MSC 2010:} 05C69, 05C75, 05C38.

\section{Introduction}
The monochromatic cycle partition problem is a Ramsey-type problem  that originated in work of  Gerencs\'er and Gy\'arf\'as~\cite{GG67} and Gy\'arf\'as~\cite{Gya83}, and lately received a considerable amount of attention from the community.
Given a graph~$G$, and a (not necessarily proper) colouring of its edges with $r$ colours, we are interested in covering $V(G)$ with mutually disjoint monochromatic cycles, using as few cycles as possible. (For technical reasons, single vertices, single edges and the empty set count as cycles as well.) To state the problem more precisely, the aim is to determine the smallest number $m=m(r,G)$ such that for any $r$-edge colouring of $G$, there are $m$  disjoint monochromatic cycles that cover  $V(G)$.

The case $G=K_n$ received the most attention so far. An easy construction shows that at least $r$ cycles are necessary to cover all the vertices, and Erd\H os, Gy\'arf\'as and Pyber~\cite{EGP91} showed that the number of cycles needed is a function of $r$ (independent of $n$). 
The currently best known  upper bound of $100r\log r$ (for large $n$) for this function is due to Gy\'arf\'as, Ruszink\'o, S\'ark\"ozy and Szemer\'edi~\cite{GRSS06}.
For $r=2$, Bessy and Thomass\'e~\cite{BT10} showed that a partition into $2$ cycles (even of different colours) always exists, thus proving a conjecture of Lehel~\cite{Aye79} and extending earlier work of~\cite{LRS98,All08}. 
(See also~\cite{Pok16} for an alternative proof.)
Motivated by ideas of Schelp,  Balogh et al.~\cite{BBG+14} suggested  a strengthening of Lehel's conjecture: Every 2-coloured $n$-vertex graph of minimum degree at least $3n/4$ can be partitioned into a red and a blue cycle.
	As evidence for their conjecture, Balogh et al.~\cite{BBG+14} proved an asymptotic version: All but $o(n)$ vertices of any 2-coloured $n$-vertex graph of minimum degree $(3/4 + o(1))n$ can be partitioned into a red and a blue cycle.
	DeBiasio and Nelsen~\cite{DN14} adapted the absorbing method of~\cite{RRS09}, to show that under the same conditions, {\it all} vertices of the graph can be partitioned into a red and a ablue cycle. Extending this technique, Letzter~\cite{Let15} proved the conjecture of Balogh et al.~for large $n$.

The conjecture~\cite{EGP91} that  $r$ monochromatic cycles suffice to partition any $r$-coloured complete graph for all $r\geq 3$,   was disproved by Pokrovskiy~\cite{Pok14}. However, his examples allow partitions of all but one vertex.
In light of this, it has been proposed to tone down the conjecture, allowing for a constant number of uncovered vertices~\cite{BBG+14, Pok14}. On the positive side, for $r=3$,  three monochromatic cycles suffice to partition of all but $o(n)$ vertices of $K_{n}$, and, for large enough $n$, 17 monochromatic cycles partition  all of $V(K_n)$; this was shown by  Gy\'arf\'as, Ruszink\'o, S\'ark\"ozy, and Szemer\'edi~\cite{GRSS11}. 
(Actually, by a slight modification of their method, one can replace the number $17$ with $10$, see Section~\ref{sec:complete-graphs-10-cycles}).
Very recently, Pokrovskiy~\cite{Pok16} showed that it is indeed possible to partition all but a constant number of vertices of a 3-coloured complete graph into at most 3 cycles~\cite{Pok16}.
	This was independently confirmed by Letzter with a better constant~\cite{Let16}.

\smallskip

For $G$ being the balanced complete bipartite graph $K_{n,n}$,
first upper bounds for  monochromatic cycle partitions were given by Haxell~\cite{Hax97} and by Peng, R\"odl and Ruci\'nski~\cite{PRR02}. The current best known result is that $4 r^2$ monochromatic cycles suffice to partition all  vertices of $K_{n,n}$, if $n$ is large~\cite{LS16}. 

For a lower bound, an easy construction shows we need at least $2r-1$ cycles to cover all the vertices. 
For instance, starting out with a properly $r$-edge-coloured $K_{r,r}$,  blow up each vertex  in one partition class to a set of size $r$, while in the other partition class only blow up one vertex to a set of size $r(r-1)+1$. 
A similar construction is given in~\cite{Pok14}.

We believe that the lower bound of $2r-1$ might be the correct answer to the monochromatic cycle partition problem in balanced complete bipartite graphs.
This suspicion has recently been confirmed for $r=2$ by Letzter~\cite{Let16}, after preliminary work of Schaudt and Stein~\cite{SS14}.
	(See also~\cite{Lim16} for a short proof for a partition into 4 cycles.
Our contribution here is that the lower bound of $2r-1$ is asymptotically correct also for $r=3$.

\begin{theorem}
	\label{thm:LSS}
	For any 3-edge-colouring of $K_{n,n}$, 
	\begin{enumerate}[\rm (a)]
		\item \label{itm:LSSa} there is a partition of all but $o(n)$ vertices of $K_{n,n}$ into five monochromatic cycles, and
		\item \label{itm:LSSb} if $n$ is large enough, then the vertices of $K_{n,n}$ can be partitioned into 18 monochromatic cycles.
	\end{enumerate}
\end{theorem}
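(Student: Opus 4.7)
My plan is to follow the regularity-plus-stability approach that has become standard for monochromatic cycle partition problems in complete and complete bipartite graphs, as in \cite{GRSS11,LS15,SS14}. For part~\eqref{itm:LSSa}, I would first apply a bipartite version of Szemer\'edi's regularity lemma to the 3-edge-coloured $K_{n,n}$, obtaining a reduced bipartite graph $R$ whose edges carry the majority colour of the corresponding $\varepsilon$-regular pair. The problem then reduces to a purely structural statement about $R$: every 3-edge-coloured balanced complete bipartite graph admits a partition of its vertex set into five \emph{monochromatic connected matchings}, i.e.\ matchings each contained in a single monochromatic component. A connected matching in $R$ lifts, via the regularity of the underlying pairs and a standard Hamilton-cycle-in-super-regular-pair argument, to a monochromatic cycle covering almost all vertices of the clusters involved; short paths through the ambient monochromatic component glue the matched pairs into one cycle, losing only $o(n)$ vertices overall.

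The heart of the argument is this structural lemma, and I would attack it by case analysis. Begin by choosing a monochromatic connected matching $M_1$ of maximum size (say in red) covering a vertex set $U$; its maximality then restricts the possible red structure on $V \setminus U$. Next, invoke bipartite analogues of Gy\'arf\'as-type results on large monochromatic components in $2$-edge-coloured bipartite graphs to partition most of what remains using two further connected matchings in a second colour, and deal with the leftover using the third colour. Since the extremal construction sharp for $2r-1 = 5$ uses all three colours in an essentially symmetric way, one cannot afford to make greedy choices: the matchings in two (or three) colours must be chosen simultaneously, which is the delicate part.

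For part~\eqref{itm:LSSb}, I would bootstrap from~\eqref{itm:LSSa}: after the five cycles of the approximate partition, there remains an exceptional set $X$ of size $o(n)$. Distribute the vertices of $X$ according to the colours in which they have high degree into the covered part, and absorb each resulting class into a bounded number of additional monochromatic cycles, either via a pre-reserved absorbing structure in each colour or by iterating a residual cover lemma analogous to the one used for $K_n$ in \cite{GRSS11}. Keeping the total cost below $13$ extra cycles yields the stated bound of $18$.

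The step I expect to be the main obstacle is the structural lemma in part~\eqref{itm:LSSa}. The lower bound $2r-1 = 5$ leaves no slack at all, so the case analysis must track matchings in several colours at once, and a naive greedy choice of the first matching typically wastes an extra cycle. The passage from approximate to exact partition in part~\eqref{itm:LSSb} is technically involved but, given~\eqref{itm:LSSa}, should follow the blueprint already established for the complete graph.
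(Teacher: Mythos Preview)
Your high-level plan is essentially the one the paper carries out: for part~\eqref{itm:LSSa} the paper applies the regularity lemma, proves a key structural lemma (Lemma~\ref{lem:key-lemma} and its robust version Lemma~\ref{lem:key-lemma-robust}) that any $3$-edge-coloured balanced (almost) complete bipartite graph is spanned by five disjoint monochromatic connected matchings, and then lifts these to cycles; for part~\eqref{itm:LSSb} it reserves an absorber in advance, applies~\eqref{itm:LSSa} to the rest, absorbs the leftover with a bounded number of extra cycles, and closes the absorber with one final cycle.

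Two points where the paper's execution differs from your sketch are worth flagging. First, the proof of the structural lemma does \emph{not} begin with a maximum connected matching in one colour; as you yourself anticipate, that greedy start does not seem to close. Instead the paper argues by contradiction via a sequence of structural claims on the monochromatic \emph{components}: each colour must have at least three non-trivial components, no two components of different colours can span, some colour has exactly three components, and eventually the colouring is forced into a rigid $3\times 6$ block pattern (Figure~\ref{fig:nice-colouring}) which is then handled by a short counting argument. Second, for~\eqref{itm:LSSb} the reserved absorber is a \emph{single} monochromatic $\varepsilon$-Hamiltonian bipartite subgraph (obtained via Peng--R\"odl--Ruci\'nski and Haxell, Lemma~\ref{lem:eps-hamiltonian}) rather than one per colour; the $o(n)$ leftover is then absorbed using the one-sided covering lemma of Gy\'arf\'as et al.\ (Lemma~\ref{lem:somewhat-unbalanced}), which costs $2r=6$ cycles on each side, giving $5+12+1=18$.
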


The second part of our theorem improves the formerly best bound  of 1695 disjoint monochromatic cycles for covering any $3$-edge coloured $K_{n,n}$~\cite{Hax97}.
We remark that in~\cite{SS14} it is shown that $12$ monochromatic cycles suffice to partition all the vertices of any two-coloured $K_{n,n}$.

A related result
for $r=2$ and for partitions into {\it paths}, is due to Pokrovskiy~\cite{Pok14}. He showed that a 2-edge-coloured $K_{n,n}$ can be partitioned into two monochromatic paths, unless the colouring is a  {\it split colouring}, that is, an edge-colouring that has a colour-preserving homomorphism  to a properly edge-coloured $K_{2,2}$. 
In a split colouring,  three disjoint monochromatic cycles (or paths) are always enough to cover all vertices.  Pokrovskiy~\cite{Pok14} conjectures $2r-1$ disjoint monochromatic paths suffice for arbitrary $r$.

\smallskip

We now briefly sketch the proof of our main result, Theorem~\ref{thm:LSS}, thereby explaining the structure of the paper.
The proof of Theorem~\ref{thm:LSS}(\ref{itm:LSSa}) involves the construction of large monochromatic connected matchings (see below) and an application of the 
Regularity Lemma~\cite{Sze76}. This method  has been introduced by \L uczak~\cite{Luc99} and became a standard approach. 

A \emph{monochromatic connected matching} is a matching in a connected component of the graph spanned by the edges of a single colour, and such a component is called a \emph{monochromatic component}.
Slightly abusing notation, we treat matchings as both edge subsets and 1-regular subgraphs.
The following is our key lemma. 
Its proof is given in Section~\ref{sec:connected-matchings}.
 
\begin{lemma}
	\label{lem:key-lemma-exact}
	\label{lem:key-lemma}
	Let the edges of $K_{n,n}$ be coloured with three colours. 
	Then there is a partition of the vertices of $K_{n,n}$ into five or less monochromatic connected matchings.
\end{lemma}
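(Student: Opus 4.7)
The plan is to prove Lemma~\ref{lem:key-lemma-exact} by a detailed case analysis of the monochromatic component structure of the 3-coloured $K_{n,n}$. Write $A\cup B$ for the bipartition (with $|A|=|B|=n$), and call the colours red, blue, and green. The overall idea is to identify three monochromatic connected matchings that jointly cover almost all vertices, and then to absorb the residual small set of vertices into at most two further (possibly trivial single-vertex or single-edge) monochromatic connected matchings, exactly reaching the bound of five.

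To find a ``backbone'', I would fix a vertex $v\in A$ and partition its edges to $B$ by colour into $N_R(v), N_B(v), N_G(v)$, each sitting inside a monochromatic component through $v$. In the \emph{unbalanced} regime, where some class — say $N_R(v)$ — is much larger than the others, I let $C$ be the red component containing $v$ and take a maximum red matching $M_1$ in $C$. By K\"onig's theorem the unmatched sets $X\subseteq A\cap V(C)$ and $Y\subseteq B\cap V(C)$ admit no red edge between them, so the bipartite graph they span is 2-coloured (blue and green). I would then invoke the analogous two-colour bipartite result (the connected-matching lemma underpinning \cite{SS14}) to partition $X\cup Y$ into three monochromatic connected matchings. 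Vertices outside $V(C)$ have no red edges into $V(C)$, and by choosing $C$ carefully I would aim to cover the complement $V\setminus V(C)$ by a single further connected matching, giving a partition into $1+3+1=5$ matchings. In the complementary \emph{balanced} regime, the colouring closely resembles a blow-up of a properly 3-edge-coloured $K_{3,3}$ — precisely the extremal configuration realising the lower bound of $5$ — and here I would construct the five matchings explicitly in analogy with the optimal covering of that example: for each colour I pick a dominant component spanning a large bipartite ``block'', take a saturating matching inside it, and handle the residual vertices (which should lie essentially on one side of the bipartition) with at most two further, possibly trivial, matchings.

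The principal obstacle will be to carry out this scheme with no slack at all in the budget of five matchings. In the unbalanced regime, the two-colour result already consumes three matchings on the residue inside $V(C)$, so essentially no room is left for vertices outside $V(C)$; this forces a careful preliminary step in which the backbone component $C$ is chosen — from among all monochromatic components in all three colours — to minimise $V\setminus V(C)$ or at least to make it coverable by a single connected matching, possibly after first absorbing stray outside vertices via edges of the other two colours. In the balanced regime the combinatorial structure is extremely rigid (it must match the extremal example exactly), and the challenge is to show that three well-chosen saturating matchings in three differently-coloured components always cover one entire side of the bipartition together with enough of the opposite side to leave a residue handleable by two connected matchings. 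Merging these two regimes, together with the intermediate cases where some but not all colours are balanced, into a single clean argument while never exceeding the bound of $5$, will be the delicate technical core of the proof.
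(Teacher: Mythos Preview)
Your plan has the right instinct---reduce to the two-colour lemma where possible, and isolate the rigid ``blow-up of a properly $3$-coloured $K_{3,3}$'' configuration as the hard case---but as written it is not a proof, and the gaps are exactly where the real work lies.

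In your unbalanced regime, two steps do not go through. First, after taking a maximum red matching $M_1$ in your component $C$, the unmatched sets $X\subseteq A\cap V(C)$ and $Y\subseteq B\cap V(C)$ need not satisfy $|X|=|Y|$, so the two-colour lemma (which is stated for \emph{balanced} complete bipartite graphs) does not apply to $[X,Y]$; and even if it did, matchings inside $[X,Y]$ cannot partition $X\cup Y$ when $|X|\neq|Y|$. Second, and more seriously, there is no reason at all why $V\setminus V(C)$ should be coverable by a single connected monochromatic matching: it is itself a $3$-coloured complete bipartite graph of possibly linear size, with no constraint forcing one colour to span it. Saying you will ``choose $C$ carefully'' does not help---the paper shows (Claim~\ref{cla:at-least-3-components}) that if any colour has at most two non-trivial components then five matchings suffice easily, so in the genuinely hard instance \emph{every} colour has at least three non-trivial components and no single component comes close to spanning.

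Your balanced regime is where essentially the entire difficulty of the lemma lives, and you have only asserted the outcome. The paper spends a sequence of structural claims (Claims~\ref{cla:no-2-monochromatic-components-cover-everything}--\ref{cla:top-partition}) forcing, under the contradiction hypothesis, the \emph{exact} partition $\bitop{H}=\cpt{1}{1}{1}\cup\cpt{2}{2}{2}\cup\cpt{3}{3}{3}$ and $\bibot{H}=\bigcup_{\sigma\in S_3}\cp{\sigma(1)}{\sigma(2)}{\sigma(3)}$; none of this is automatic from the colouring merely ``resembling'' a blow-up. And even once that rigid structure is in hand, finding five matchings is not immediate: it requires the ``weak index'' counting argument (Claim~\ref{cla:a+b-into-x-gen} and the paragraphs following it), which exploits inequalities among the nine block sizes in a way your sketch does not anticipate. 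A dichotomy based on the colour profile at a single vertex $v$ is too local to detect or produce this global structure.
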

 
Now for the proof of Theorem~\ref{thm:LSS}(\ref{itm:LSSa}),  apply the Regularity Lemma to the  given 3-edge-coloured $K_{n,n}$. The reduced graph $\Gamma$ is
almost complete bipartite and inherits a $3$-colouring (via majority density of the pairs). 
A robust version of Lemma~\ref{lem:key-lemma}, namely Lemma~\ref{lem:key-lemma-robust} (see Section~\ref{sec:connected-matchings-dense}), permits us to partition almost all of $R$ into five monochromatic connected matchings.
In the subsequent step, presented in Section~\ref{sec:regularity}, we apply a specific case of the Blow-up Lemma \cite{GRSS07,KSS97,Luc99} to get from our matchings to
five monochromatic cycles 
which together  partition almost all vertices of $K_{n,n}$. 
 
The proof of Theorem~\ref{thm:LSS}(\ref{itm:LSSb}) is given in Section~\ref{sec:covering-with-18-cycles}.
It combines ideas of Haxell~\cite{Hax97} and Gy\'arf\'as et al.~\cite{GRSS11} with Theorem~\ref{thm:LSS}(\ref{itm:LSSa}). First, we fix a large monochromatic  subgraph $H$, which  is Hamiltonian and remains so even if some of the
vertices are deleted from it. 
Then,  using Theorem~\ref{thm:LSS}(\ref{itm:LSSa}), we cover almost all vertices of $K_{n,n} -V(H)$
with five vertex-disjoint monochromatic cycles. 
The amount of still uncovered vertices
being much smaller than the order of $H$, we can apply a Lemma from~\cite{GRSS06} in order to absorb these vertices using vertices from $H$, and producing only a few more cycles. 
We finish by taking one more monochromatic cycle, which covers the remainder of $H$.

\section{Covering with connected matchings}\label{sec:connected-matchings}
In this section we give the proof of the exact version of Lemma~\ref{lem:key-lemma-exact}. 
Its proof has been written with the proof of the more technical robust counterpart (Lemma~\ref{lem:key-lemma-robust} in Section~\ref{sec:connected-matchings-dense}) in mind, in order to ease the transition between the two proofs.
It may therefore appear to be a bit overly lengthy in some of its parts.

\subsection{Preliminaries}
This subsection contains some preliminary results for the proof of our key lemma, Lemma~\ref{lem:key-lemma}, which is given in the subsequent subsection. 
We start with some definitions.
The \emph{biparts} of a bipartite graph $H$ are its partition classes, which we denote by $\bitop{H}$ and $\bibot{H}$. If $X \subset \bitop{H}$ and $Y \subset \bibot{H},$ or if $X \subset \bibot{H}$ and $Y \subset \bitop{H},$ we write $[X,Y]$ for the bipartite subgraph induced by the edges between $X$ and $Y$. 
\begin{definition}[empty graph, trivial graph]
A bipartite graph is \emph{empty} if it has no vertices and \emph{trivial} if one of its biparts has no vertices.
\end{definition}

For a colouring of the edges of $H$ with colours red, green and blue, a {\it red \connected component} $R$ is a connected component in the subgraph obtained by deleting the non-red edges and a {\it red matching} is a matching whose edges are red. 
The same terms are defined for colours green and blue. 
We now introduce two types of colourings for 2-coloured bipartite graphs.
We call an edge colouring of a bipartite graph $H$ in red and blue a \emph{$V$-colouring} if there are monochromatic components $\R$ and $\B$ of distinct colours such that
\begin{enumerate}
	\item each of $\R$ and $\B$ is non-trivial;
	\item $\R \cup \B$ is spanning in $H;$
	\item $|V(\bitop { \R \cap \B})|=|V(\bitop{H})|$ or $|V(\bibot{\R \cap \B})|=|V(\bibot{H})|$.
\end{enumerate}
A colouring of  $E(H)$ in red and blue is \emph{split}, if 
\begin{enumerate}
	\item all monochromatic components are non-trivial;
	\item each colour has exactly two monochromatic components.
\end{enumerate}
The following lemma classifies the component structure of a 2-coloured bipartite graph.

\begin{lemma}
	\label{lem:connected-components-for-two-colours}
	If the bipartite  $2$-edge-coloured  graph $H$ is complete, then one of the following holds:
	\begin{enumerate}[\rm (a)]
		\item \label{itm:connected-components-for-two-colours-a} There is a spanning monochromatic component, 
		\item \label{itm:connected-components-for-two-colours-b} $H$ has a $V$-colouring, or
		\item \label{itm:connected-components-for-two-colours-c} the edge-colouring is split.
	\end{enumerate}
\end{lemma}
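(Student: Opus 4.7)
The plan is to assume that (a) fails—that is, $H$ has no spanning monochromatic component—and derive (b) or (c). Write $X=\bitop{H}$ and $Y=\bibot{H}$, and split the argument on whether every vertex of $H$ is incident to edges of both colours.

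Suppose first that every vertex has edges of both colours, so every vertex lies in a non-trivial red and a non-trivial blue component. Let the red components be $R_1,\ldots,R_p$ with biparts $X_i:=R_i\cap X$ and $Y_i:=R_i\cap Y$, all non-empty, and note that every edge between $X_i$ and $Y_j$ with $i\neq j$ is forced to be blue. If $p=1$ the red component is spanning, contradicting our assumption. If $p\geq 3$, for any distinct $i,k$ a third index $j$ exists, and a vertex $y\in Y_j$ is then blue-adjacent to both $X_i$ and $X_k$, merging them in blue; a symmetric argument merges all the $Y_\ell$ into the same blue component, producing a spanning blue component—again a contradiction. If $p=2$, the blue edges between the two red blocks yield the two connected pieces $X_1\cup Y_2$ and $X_2\cup Y_1$; any blue edge inside some $R_i$ would merge them into a single spanning blue component, so every edge inside $R_i$ must be red. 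Then $R_1,R_2$ are complete bipartite in red, the only blue components are $X_1\cup Y_2$ and $X_2\cup Y_1$, and the colouring is split—case (c).

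Suppose instead that some vertex is monochromatic in its incident colours; by symmetry assume $v\in X$ and all edges at $v$ are blue. Then the blue component $B$ of $v$ covers all of $Y$ (via $v$) and every $x\in X$ with at least one blue edge. Since $B$ is not spanning, there exists $u\in X$ with no blue edges, so every edge at $u$ is red, and the red component $R$ of $u$ covers all of $Y$ and every $x\in X$ with at least one red edge. Both $R$ and $B$ are then non-trivial (each contains $Y$ and an $X$-vertex); $R\cup B$ covers $V(H)$, because every vertex of $X$ has some incident edge and every vertex of $Y$ lies in both $R$ and $B$; and $\bibot{R\cap B}=Y$, verifying condition (3) of a $V$-colouring with $\bibot{H}$ as the big side—case (b). The three symmetric positions of the monochromatic vertex are handled identically.

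The main obstacle I foresee is the sub-case $p\geq 3$: one must carefully use a third red block to produce the blue ``bridges'' that merge the diagonal blocks into a single blue component. Once that merging argument is in place, the remaining work—checking non-triviality of the claimed components and verifying the bipart conditions in the definitions of $V$-colouring and split colouring—is essentially bookkeeping.
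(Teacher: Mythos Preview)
Your proof is correct, but it takes a genuinely different route from the paper's. The paper simply picks any non-trivial monochromatic component $R$ (say red), sets $X:=H-R$, and case-splits on whether $\bitop{X}$ and $\bibot{X}$ are empty: if both are empty, $R$ is spanning; if exactly one is empty, the all-blue edges from $R$ to the surviving side of $X$ give a $V$-colouring; if neither is empty, any blue edge inside $R$ or $X$ produces a spanning blue component, so otherwise the colouring is split. This is three sentences of actual argument.

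Your approach instead splits on whether some vertex is monochromatic in its incident edges. The monochromatic-vertex case gives the $V$-colouring very cleanly, and in the remaining case you count red components and handle $p=1$, $p=2$, $p\geq 3$ separately. This is longer but arguably makes the source of the $V$-colouring more transparent. One small point: in your $p\geq 3$ case you should say explicitly that the bridging vertex $y\in Y_j$ itself lies in the blue component containing all the $X_i$, so the blue component containing the $X_i$'s and the one containing the $Y_\ell$'s are in fact the same; this is immediate but your current phrasing (``a symmetric argument merges all the $Y_\ell$ into the same blue component'') leaves it implicit.
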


\begin{proof}
	Let $\R$ be a non-trivial \connected component in colour red, say. 
	Set $X := H - \R$ and note that all edges in  $[\bitop{\R}, \bibot{X}]$ and $[\bibot{\R}, \bitop{X}]$  are blue.
	
	We first assume that $|\bitop X| = 0$. 
	If also $|\bibot X| = 0$, we are done, since then $\R$ is spanning.
	Otherwise, $|\bibot X| > 0$, and thus the colouring is a $V$-colouring.
			   
	So  by symmetry we can assume that both $|\bitop X| > 0$ and $|\bibot X| > 0$.
	If there is a blue edge in $\R$ or in $X$, then $H$ is spanned by one blue 
	component.
	Hence, all edges inside $\R$ and $X$ are red  and the colouring is split.
\end{proof}
    
\begin{corollary} \label{cor:split-v}
	If a bipartite $2$-edge-coloured graph $H$ is complete, then 
	\begin{enumerate}[\rm (a)]
		\item \label{cor:split-v-a} there are one or  two non-trivial monochromatic \connected components that together span $H$, and
		\item \label{cor:v-b} if the colouring is not split, then there is a colour with exactly one non-trivial component.
	\end{enumerate}
\end{corollary}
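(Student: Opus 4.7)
The plan is to deduce the corollary from Lemma~\ref{lem:connected-components-for-two-colours} by a case analysis on which of its three conclusions holds.

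For part~(\ref{cor:split-v-a}) I would argue as follows. If $H$ has a spanning monochromatic component, then that component is non-trivial (since $H$ has vertices in both biparts) and by itself covers $V(H)$. If $H$ has a $V$-colouring, then the definition directly provides two non-trivial components $\R$ and $\B$ whose union spans $H$. If the colouring is split, then the proof of Lemma~\ref{lem:connected-components-for-two-colours} already exhibits $H$ as the vertex-disjoint union of two red components $\R$ and $X$, each with both biparts non-empty and hence non-trivial, and together they span $V(H)$. In all three cases we thus obtain at most two non-trivial monochromatic components covering $V(H)$.

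For part~(\ref{cor:v-b}), I assume that the colouring is not split, so only cases~(\ref{itm:connected-components-for-two-colours-a}) and~(\ref{itm:connected-components-for-two-colours-b}) of the lemma remain. In case~(\ref{itm:connected-components-for-two-colours-a}) a spanning monochromatic component contains every vertex of $H$, so every other component of that colour is vertex-free; hence that colour has exactly one non-trivial component. In case~(\ref{itm:connected-components-for-two-colours-b}), by the definition of a $V$-colouring we may assume, without loss of generality, that $V(\bitop{\R\cap\B})=V(\bitop{H})$. Then $\R$ already contains the entire top bipart of $H$, so any further red component has an empty top bipart and is therefore trivial. Thus red has exactly one non-trivial component.

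The only point that requires a moment's care is this last step: I would use that monochromatic components partition the vertex set, so if a single component of some colour occupies one full bipart, every other component of the same colour is forced to be trivial. Beyond this observation, the corollary is an essentially immediate consequence of Lemma~\ref{lem:connected-components-for-two-colours}, so no serious obstacle is expected.
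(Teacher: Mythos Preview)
Your argument is correct and follows exactly the route the paper intends: the corollary is stated without proof, as an immediate consequence of Lemma~\ref{lem:connected-components-for-two-colours}, and your case analysis spells this out accurately. One small simplification: in the split case you need not appeal to the proof of the lemma, since the definition of a split colouring already gives exactly two non-trivial red components, which (as components) partition $V(H)$.
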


Let us now turn to monochromatic matchings.
 
\begin{lemma}\label{lem:connected-matchings-for-two-colours} 
	Let $H$ be a balanced bipartite  complete graph whose edges are coloured red and blue. 
	Then either
		\begin{enumerate}[\rm (a)]
			\item $H$ is spanned by two vertex disjoint monochromatic connected matchings, one of each colour, or
			\item the colouring is split and
			\begin{itemize}
				\item $H$ is spanned by one red and two blue vertex disjoint connected monochromatic matchings and
				\item $H$ is spanned by one blue and two red vertex disjoint connected monochromatic matchings.
			\end{itemize}
		\end{enumerate}
	\end{lemma}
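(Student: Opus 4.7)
The plan is to classify the structure of the $2$-colouring via Lemma~\ref{lem:connected-components-for-two-colours} and treat the non-split and split regimes separately. When the colouring is not split, Corollary~\ref{cor:split-v}(\ref{cor:v-b}) supplies a colour, say red, with exactly one non-trivial monochromatic component $\R$; in particular, every vertex outside $V(\R)$ is incident only to blue edges. In this case I will establish (a). When the colouring is split I will construct both decompositions of (b) directly from the canonical structure of a split colouring.

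For the non-split case, let $M_R$ be a maximum matching inside $\R$. Since $M_R$ lives in a single red component it is automatically a red connected matching, and because $H$ is balanced, the uncovered set $U := V(H) \setminus V(M_R)$ has the same size $k$ on each side. The key observation is that every edge between $U \cap \bitop{H}$ and $U \cap \bibot{H}$ is blue: a red edge here would have both endpoints in $V(\R)$ (since vertices outside $V(\R)$ carry no red edges) and would therefore give an augmenting edge for $M_R$ inside $\R$, contradicting maximality. Hence $[U \cap \bitop{H}, U \cap \bibot{H}]$ is a complete bipartite graph $K_{k,k}$ in blue, which is connected and so lies inside a single blue component. Any perfect matching of this $K_{k,k}$ (empty if $k=0$) is the blue connected matching needed to complete~(a).

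For the split case, I first argue, using that $H$ is complete balanced bipartite and that both colours have exactly two non-trivial components, that the colouring takes the canonical form $\bitop{H} = A_1 \sqcup A_2$, $\bibot{H} = B_1 \sqcup B_2$ with all four parts non-empty, in which $[A_1, B_1] \cup [A_2, B_2]$ carries all the red edges and $[A_1, B_2] \cup [A_2, B_1]$ carries all the blue edges. Writing $a_i := |A_i|$ and $b_i := |B_i|$, balance gives $a_1 + a_2 = b_1 + b_2$, and by symmetry I may assume $a_2 \le b_1$, so that $x := b_1 - a_2 = a_1 - b_2 \ge 0$. Take a red matching of size $x$ inside $[A_1, B_1]$, a perfect matching inside $[A_1, B_2]$ covering the remaining $b_2$ vertices of $A_1$ together with all of $B_2$, and a perfect matching inside $[A_2, B_1]$ covering all of $A_2$ together with the remaining $a_2$ vertices of $B_1$. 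A direct count shows these three matchings partition $V(H)$, and each lies inside a connected complete bipartite subgraph, so each is a monochromatic connected matching. This yields the one-red-plus-two-blue decomposition, and an analogous construction (starting with a blue matching of appropriate size in $[A_1, B_2]$ or $[A_2, B_1]$, depending on the sign of $a_1 - b_1$) yields the one-blue-plus-two-red decomposition.

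The main technical subtlety I anticipate is the connectedness of the blue matching in the non-split case, which ultimately hinges on the uniqueness of $\R$ supplied by Corollary~\ref{cor:split-v}(\ref{cor:v-b}); once that is in place, the augmenting-edge argument forces the whole bipartite graph on $U$ to be blue and connected, and the rest is combinatorial bookkeeping.
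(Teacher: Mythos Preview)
Your proof is correct and follows essentially the same approach as the paper's: in the non-split case you take a maximum matching in the unique non-trivial component of one colour and observe that the leftover is a monochromatic complete balanced bipartite graph in the other colour, and in the split case you take matchings in the two components of one colour and finish with one matching in the surviving component of the other. The only cosmetic difference is that you invoke Corollary~\ref{cor:split-v}(\ref{cor:v-b}) directly, whereas the paper separates the spanning-component and $V$-colouring subcases of Lemma~\ref{lem:connected-components-for-two-colours} before treating them identically; the substance is the same.
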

\begin{proof}
	First assume  that the colouring is split. 
	We take one red maximum matching in each of the two red components. 
	This leaves at least one of the blue components with no vertices on each side. 
	We extract a third maximum matching from the leftover of the other blue component, thus leaving one of its sides with no vertices. 
	Thus the three matchings together span $H$.
	Note that we could have switched the roles of red and blue in order to obtain two blue and one red matching that span $H$.

	So by Lemma~\ref{lem:connected-components-for-two-colours}, we may assume that either there is a colour, say red, with a spanning  component $\R$, or $H$ has a $V$-colouring, with components $\R$ in red and $\B$ in blue, say. 
	In either case, we take a maximum red matching $\red{M}$  in $\R$. 
	Then there is an induced balanced bipartite subgraph of $H$, whose edges are all blue, which contains all uncovered vertices of each bipart of $H$. 
	If this subgraph is trivial, we are done. 
	Otherwise, we finish by extracting from it a maximum blue matching $\blue{M'} \subset B$.
	As $H$ is complete and there are no leftover edges in said subgraph, we obtain that $M \cup M'$ spans $H$, and we are done.
\end{proof}

We continue with a lemma about the component structure of $3$-edge-coloured bipartite graphs.

\begin{lemma}
	\label{lem:strange-one}
	Let the edges of the complete bipartite graph $H$ be coloured in red, green and blue, such that each colour has at least four non-trivial \connected components; then there are three monochromatic  \connected components that together span $H$.
\end{lemma}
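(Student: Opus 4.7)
The plan is to proceed by contradiction, assuming that no three monochromatic components span $H$.

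First, for any $v \in \bitop H$, the three components $R_v, G_v, B_v$ containing $v$ automatically cover $\bibot H$: every $w \in \bibot H$ is joined to $v$ by an edge of one of the three colours. Under the contradiction hypothesis these three components must miss some $x \in \bitop H$, so that $x$ lies in a component of each colour distinct from $R_v, G_v, B_v$. I then analyse the colour of each edge $xw$ with $w \in \bibot H$: the edge cannot be red if $w \in \bibot R_v$, green if $w \in \bibot G_v$, or blue if $w \in \bibot B_v$. This forces $\bibot R_v \cap \bibot G_v \cap \bibot B_v = \emptyset$, and for each pairwise intersection (say $\bibot R_v \cap \bibot G_v \setminus \bibot B_v$) the colour of $xw$ is forced to be blue, placing all such $w$ into a common blue component $B^* \neq B_v$. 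Analogously one obtains forced components $G^* \neq G_v$ and $R^* \neq R_v$. A direct check shows that $(R^*, G^*, B^*)$ then covers $\bibot H$ together with every upper vertex that was missed by $(R_v, G_v, B_v)$.

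Next, I invoke the hypothesis of at least four non-trivial components per colour through Lemma~\ref{lem:connected-components-for-two-colours}. Merging green and blue into a single colour yields a 2-edge-coloured complete bipartite graph $H$, and the lemma returns three possible structures. The conclusion ``red has a spanning monochromatic component'' would give only one non-trivial red component; the $V$-colouring case would force one bipart of $H$ to lie entirely in a single red component (again yielding a single non-trivial red component); and the split case would give exactly two red components -- all three options are incompatible with four non-trivial red components. Hence the remaining case must hold: the green-blue subgraph of $H$ contains a spanning connected subgraph of $H$. By symmetry, the same holds for each pair of colours.

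Finally, I combine the local data at $x$ with this global connectivity to produce the desired spanning triple. The candidate triples are $(R^*, G^*, B^*)$ and the three one-coordinate swaps $(R_v, G^*, B^*)$, $(R^*, G_v, B^*)$, $(R^*, G^*, B_v)$; each covers $\bibot H$ together with a different portion of the vertices that $(R^*, G^*, B^*)$ misses in $\bitop H$. The main obstacle will be showing that at least one of these triples must span $H$. This requires a careful case analysis according to which of the pairwise intersection sets from the first step are non-empty, together with handling possible coincidences among $R^*, G^*, B^*$ and between them and the components at $v$. The four-components hypothesis, through its consequences derived in the second step, is precisely what rules out the pathological configurations that would otherwise allow every candidate triple to leave some vertex uncovered.
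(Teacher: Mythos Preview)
Your approach is genuinely different from the paper's. The paper fixes a non-trivial red component $R$, looks at the two ``cross'' bipartite graphs $[\bitop R,\bibot{H-R}]$ and $[\bibot R,\bitop{H-R}]$ (which are green/blue only), and applies the two-colour classification of Lemma~\ref{lem:connected-components-for-two-colours} to each of them, performing a case analysis on split versus $V$-colourings. You instead fix a vertex $v$ and work with the triple $(R_v,G_v,B_v)$ of components through it.

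Your first two paragraphs contain correct observations. The conclusion of the first paragraph --- that $(R_x,G_x,B_x)$ covers $\bibot H$ together with every upper vertex $y$ missed by $(R_v,G_v,B_v)$ --- is fine \emph{provided} at least one of the pairwise intersections $\bibot{R_v}\cap\bibot{G_v}$, $\bibot{R_v}\cap\bibot{B_v}$, $\bibot{G_v}\cap\bibot{B_v}$ is non-empty (if, say, $\bibot{R_v}\cap\bibot{G_v}\neq\emptyset$, then any such $y$ sends a blue edge into this set, forcing $B_y=B_x$ and hence $y\in B_x$). You have not addressed the degenerate case where $\bibot{R_v},\bibot{G_v},\bibot{B_v}$ are pairwise disjoint. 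The second paragraph correctly deduces that the union of any two colour classes spans a connected subgraph.

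The real gap is the third paragraph: it is a plan, not a proof. You list four candidate triples and assert that one of them must span $H$, but you do not carry out the analysis, and one of your stated premises is in fact false. The swapped triples need not cover $\bibot H$: for $(R_v,G_x,B_x)$, take any $w\in\bibot{G_v}\cap\bibot{B_v}\setminus\bibot{R_v}$; then $xw$ is forced red, so $w\in\bibot{R_x}$, but $w\notin\bibot{G_x}$ (since $w\in\bibot{G_v}$ and $G_x\neq G_v$) and likewise $w\notin\bibot{B_x}$, so $w$ is missed by $(R_v,G_x,B_x)$. Nor do you explain concretely how the connectivity fact from your second paragraph enters this final step. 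As written, the argument stops precisely where the difficulty begins; the paper's component-based decomposition sidesteps this by never needing to compare triples anchored at different vertices.
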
   
 
\begin{proof}
	Let $\R$ be a red non-trivial component.
	Since there are three more red non-trivial  components, the three graphs $X := H - \R$, $[\bitop{\R}, \bibot{X}]$ and $[\bibot{\R}, \bitop{X}]$ are each non-trivial. 
	Moreover, the edges of the latter two graphs are green and blue.
	By Corollary~\ref{cor:split-v}(\ref{cor:split-v-a}) there are one or two non-trivial monochromatic components that together span $[\bibot{\R}, \bitop{X}]$. 
	So, if $[\bitop{\R}, \bibot{X}]$ has a  spanning monochromatic \connected component, then we can span $H$ with at most three components, which is as desired. 
	Therefore and by symmetry we may assume from now on that none of $[\bitop{\R}, \bibot{X}]$ and $[\bibot{\R}, \bitop{X}]$ has a spanning monochromatic component. 
	Suppose $[\bitop{\R}, \bibot{X}]$ has a split-colouring. 
	By Lemma~\ref{lem:connected-components-for-two-colours}, either $[\bibot{\R}, \bitop{X}]$ is split or one of $\bibot{\R}$ and $\bitop{X}$ is contained in the intersection of a blue and a green monochromatic \connected component. 
	In the latter case the union of three monochromatic components of the same colour contains one of the biparts of $H$. 
	But this is impossible as each colour has at least four non-trivial \connected components. 
	On the other hand, if both $[\bitop{\R}, \bibot{X}]$ and $[\bibot{\R}, \bitop{X}]$ have a split colouring, then each bipart  of $H$ is contained in the union of four green components  as well as in the union of four blue components, and thus all edges in $X$ are red. 
	But then there are only two non-trivial red components, $\R$ and $X$, a contradiction.
			 
	So by Lemma~\ref{lem:connected-components-for-two-colours}, and by symmetry, we know that $[\bitop{\R}, \bibot{X}]$ and $[\bibot{\R}, \bitop{X}]$ both   have green/blue $V$-edge-colourings. 
	Thus each of $[\bitop{\R}, \bibot{X}]$ and $[\bibot{\R}, \bitop{X}]$  has  a non-trivial blue component and a non-trivial green \connected component, say these are $\bone,$ $\gone$   and $\btwo,$ $\gtwo$ respectively. 
	Furthermore, $\bibot{X}$ or $\bitop{\R}$ is contained in the intersection $\bone \cap \gone$, and  $\bitop{X}$ or $\bibot{\R}$ is spanned by the intersection $\btwo \cap \gtwo.$
			
	We first look at the case where $\bibot{X}$ is contained in $\bibot{\bone \cap \gone}.$ If $\bibot{\R}$ is contained in $\bibot{\btwo \cap \gtwo}$, then both green and blue have at most two spanning \connected components, which is a contradiction. 
	On the other hand, if  $\bitop{X}$  is contained in $\bitop{\btwo \cap \gtwo},$ then $H$  is  spanned by the union of $\R$ and the blue components in $H$ that  contain $\bone$ and $\btwo$, and we are done.
			  
	Consequently we can assume by symmetry and by Lemma~\ref{lem:connected-components-for-two-colours} that $\bitop{\R}$ is spanned by $\bitop{\bone \cap \gone}$ and $\bibot{\R}$ is spanned by $\bibot{\btwo \cap \gtwo}$. Observe that $[\bibot{\green{G_1}},\bitop{\green{G_2}}]$ is coloured red and blue and  $[\bibot{\blue{B_1}},\bitop{\blue{B_2}}]$ is coloured red and green, since otherwise, we obtain the desired cover. 
	Suppose there is a red component of $[\bibot{\green{G_1}},\bitop{\green{G_2}}]$ that is spanning in $[\bibot{G_1},\bitop{G_2}]$. 
	Such a component, together with $\bone$ and $\btwo$, spans $H$. 
	So, we can assume $[\bibot{\green{G_1}},\bitop{\green{G_2}}]$ has no red spanning red component. 
	Moreover, since there are at least  four non-trivial blue components, $[\bibot{\green{G_1}},\bitop{\green{G_2}}]$ contains two blue components, which are  non-trivial  each.

	Since these blue components are non-trivial in $H$, $[\bibot{\green{G_1}},\bitop{\green{G_2}}]$ does not have a $V$-colouring (in itself). 
	Thus, by Lemma~\ref{lem:connected-components-for-two-colours}, $[\bibot{\green{G_1}},\bitop{\green{G_2}}]$ is split coloured in red and blue. 
	Similarly we see that $[\bibot{\blue{B_1}},\bitop{\blue{B_2}}]$ is split coloured in red and green.

	Consider the edges in  $[\bibot{\gone},\bitop{\btwo}]$ and $[\bibot{\bone},\bitop{\gtwo}]$. 
	If any of these edges is green or blue, then our graph is spanned by three green or by three blue components. 
	On the other hand, if all edges in $[\bibot{\gone},\bitop{\btwo}]$ and $[\bibot{\bone},\bitop{\gtwo}]$ are red, then $H$ has  only {three} non-trivial red components, a contradiction.
\end{proof}

\subsection{Proof of Lemma~\ref{lem:key-lemma}}
   
We are now ready to prove Lemma~\ref{lem:key-lemma}. 
Let $H$ be a balanced bipartite complete graph of order $2n$.
Our aim is to show that $H$ can be spanned with five vertex disjoint monochromatic connected matchings. 
We suppose that this is wrong in order to obtain a contradiction. 
We prove a series of claims in order to reduce the problem to a specific colouring, which then receives a distinct treatment.

\begin{claim}\label{cla:at-least-3-components}
	Each colour has at least three non-trivial \connected components.
\end{claim}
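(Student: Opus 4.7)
The plan is a direct contradiction argument: I suppose some colour, say red, has at most two non-trivial monochromatic components $\rone$ and (possibly) $\rtwo$, and then construct a partition of $V(H)$ into at most five vertex-disjoint monochromatic connected matchings, which contradicts the standing assumption that no such partition exists.

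First I take a maximum red matching $\red{M_i}$ inside each non-trivial red component, and set $U := V(H) \setminus V(\red{M_1} \cup \red{M_2})$, with the convention $\red{M_i} = \emptyset$ if the corresponding $\red{R_i}$ does not exist. Since a matching in a bipartite graph saturates the same number of vertices in each bipart, we have $|U \cap \bitop{H}| = |U \cap \bibot{H}|$, so the subgraph of $H$ induced on $U$ is a balanced complete bipartite graph (possibly empty).

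The key observation is that no edge of $H[U]$ is red: a red edge inside $\rone$ or $\rtwo$ joining two vertices left unsaturated by the matching would contradict the maximality of that matching; a red edge between $\rone$ and $\rtwo$ would merge them into one red component; and a red edge at a vertex outside every non-trivial red component would place that vertex inside a non-trivial red component. Hence $H[U]$ is a balanced complete bipartite graph whose edges are coloured only in green and blue, so Lemma~\ref{lem:connected-matchings-for-two-colours} partitions $V(U)$ into at most three vertex-disjoint monochromatic connected matchings. Adjoining $\red{M_1}$ and $\red{M_2}$ yields a partition of $V(H)$ into at most $2 + 3 = 5$ monochromatic connected matchings, the desired contradiction.

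I do not expect a genuine obstacle here: the two things to check are the balance of $U$ (immediate from bipartite matching theory) and the absence of red edges in $H[U]$ (immediate from the maximality of each $\red{M_i}$ together with the definition of a connected component). The real difficulties, I anticipate, will arise in the subsequent claims of the proof of Lemma~\ref{lem:key-lemma}, where a similar strip-away-a-matching-and-recurse strategy must be pushed further to upgrade ``at least three'' components to the ``at least four'' hypothesis needed to invoke Lemma~\ref{lem:strange-one}.
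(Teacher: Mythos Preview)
Your proof is correct and matches the paper's approach essentially line for line: take maximum red matchings in the at most two non-trivial red components, observe that the leftover is a balanced complete bipartite graph with only green and blue edges, and apply Lemma~\ref{lem:connected-matchings-for-two-colours} to cover it with three more connected matchings. Your justification for why $H[U]$ contains no red edge is in fact more explicit than the paper's, which asserts this without argument.
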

\begin{proof}
	Suppose  the claim is wrong for colour red, say. 
	By assumption, there are two (possibly trivial) red components $R_1$ and $R_2$ in $H$, such that all other red components are trivial. 
	Let $\red{M}$ be a maximum red matching in $\rone \cup \rtwo$. 
	Then every edge in the balanced bipartite subgraph $X:= H - \red{M}$ is green or blue. 
	By Lemma~\ref{lem:connected-matchings-for-two-colours}, $H$ can be spanned  with three vertex-disjoint monochromatic connected matchings. 
	So in total we found at most five vertex-disjoint monochromatic connected matchings that together span $H$.
\end{proof} 

\begin{claim}
	\label{cla:no-2-monochromatic-components-cover-everything}
	There are no two monochromatic \connected components that together span~$H$.
\end{claim}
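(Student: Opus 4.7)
The plan is to derive a contradiction with our standing assumption by showing that the existence of two monochromatic components spanning $V(H)$ forces a partition of $V(H)$ into at most five disjoint monochromatic connected matchings.

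\textbf{Same colour case.} Suppose both spanning components are red, say $R_1$ and $R_2$; their vertex sets then partition $V(H)$. Take maximum red matchings $M_1 \subseteq R_1$ and $M_2 \subseteq R_2$. These are disjoint connected red matchings, and the residual $Z := V(H) \setminus V(M_1 \cup M_2)$ is balanced and bipartite complete. No red edge survives in $Z$: any such edge between $M_i$-uncovered vertices inside $R_i$ would contradict maximality, and no red edge can join vertices of distinct red components. Hence $Z$ is $2$-edge-coloured in green and blue, and Lemma~\ref{lem:connected-matchings-for-two-colours} partitions $Z$ into at most three further connected matchings, for at most five in total.

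\textbf{Different colour case.} Write the two components as $R$ (red) and $B$ (blue), and set $A := V(R) \setminus V(B)$, $X := V(B) \setminus V(R)$, $D := V(R) \cap V(B)$. A crucial observation is that every cross-bipart edge between $A$ and $X$ must be green: a red edge would place an $X$-vertex into $R$, and a blue edge would place an $A$-vertex into $B$. If $A = \emptyset$ then $V(B) = V(H)$, and taking a maximum connected blue matching in $B$ leaves a $2$-coloured bipartite complete residual that Lemma~\ref{lem:connected-matchings-for-two-colours} partitions into at most three further matchings, for at most four in total; the case $X = \emptyset$ is symmetric. Otherwise, with both $A$ and $X$ nonempty, take a maximum red matching $M_R$ in $R$ and a maximum blue matching $M_B$ in $B - V(M_R)$. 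In the residual $Z$, all edges incident to $D$-vertices, all $[A, X]$ edges and all cross-bipart $[D, D]$ edges are forced green by the colour constraints together with the maximality of $M_R$ and $M_B$; only cross-bipart edges within $A \cap Z$ or within $X \cap Z$ may be non-green, confined respectively to blue components other than $B$ or red components other than $R$.

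The main obstacle will be covering this residual $Z$ by at most three further connected matchings in the proper overlap subcase. The idea is to extract one large connected green matching from the dense green subgraph on $Z$ and, should any $A \cap Z$ or $X \cap Z$ vertices remain uncovered, absorb them using connected matchings in the auxiliary blue or red components that carry the non-green edges. A Hall-type argument exploiting the sizes of $A \cap Z$, $X \cap Z$, $D \cap Z$, together with the observation that $D$-vertices (when present in $Z$) link the two ``halves'' $[A \cap Z, X \cap Z]$ of the green subgraph into a single connected component, shows that three additional connected matchings always suffice. The total count therefore never exceeds five, contradicting our standing assumption and completing the proof.
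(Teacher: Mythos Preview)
Your setup in the different-colour case is sound: the colour analysis of edges in the residual $Z$ is correct, and the observation that only the interior bipartite pieces $[\overline{A\cap Z},\underline{A\cap Z}]$ and $[\overline{X\cap Z},\underline{X\cap Z}]$ can carry non-green edges is right. (Your same-colour case and your $A=\emptyset$ subcase are also correct, though both are in fact impossible here: Claim~\ref{cla:at-least-3-components} forces each colour to have at least three non-trivial components, so neither two red components nor a single blue component can span $H$. This is how the paper dispatches those cases in one line.)

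The genuine gap is your last paragraph. You assert that a ``Hall-type argument'' yields three further connected matchings, but you never give it, and the obstacles are real. First, the green subgraph on $Z$ need not be connected: if $D\cap Z=\emptyset$ the two all-green pieces $[\overline{A\cap Z},\underline{X\cap Z}]$ and $[\underline{A\cap Z},\overline{X\cap Z}]$ may lie in distinct green components, so one green matching is not enough a priori. Second, and more seriously, the non-green edges inside $A\cap Z$ are blue but need not lie in a single blue component of $H$ (and likewise for the red edges inside $X\cap Z$); your phrase ``the auxiliary blue or red components that carry the non-green edges'' quietly allows an unbounded number of components, so ``absorbing'' leftovers by connected matchings is not automatic. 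You have not shown why three matchings suffice rather than many.

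The paper closes this gap differently. After taking $M_R$ and $M_B$ it takes \emph{maximum} green matchings $M_1^{\text{green}},M_2^{\text{green}}$ in the (at most two) green components of $H-V(M_R\cup M_B)$ that contain the all-green pieces $[\overline{B'},\underline{R'}]$ and $[\underline{B'},\overline{R'}]$. K\"onig-type maximality then forces one of the secondary residuals $R'',B''$ to be empty; on the surviving one (say $R''$) there are no red edges, and a short dichotomy finishes: if $R''$ has no green edge it is complete bipartite in blue, so one connected blue matching covers it (total five); if $R''$ does have a green edge, maximality forces $M_1^{\text{green}},M_2^{\text{green}}$ to be empty, and then Lemma~\ref{lem:connected-matchings-for-two-colours} applied to the two-coloured $R'$ gives at most three matchings (again total five). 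That case split is exactly what your final paragraph is missing.
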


\begin{proof}
	Suppose the claim is wrong and there are monochromatic components $R$ and $B$ that together span $H$. 
	By Claim~\ref{cla:at-least-3-components} we can assume that they have distinct colours, say $R$ is red and $B$ is blue.
	Take a red matching $\red{M^{\mbox{\scriptsize red}}}$ of maximum size  in $\R$ and  a blue matching $\blue{M^{\mbox{\scriptsize blue}}}$ of maximum size in $\B - V(\red{M^{\mbox{\scriptsize red}}})$. 
	Set  $\rp := \R - V(\red{M^{\mbox{\scriptsize red}}} \cup \blue{M^{\mbox{\scriptsize blue}}})$ and  $\bp := \B - V(\red{M^{\mbox{\scriptsize red}}} \cup \blue{M^{\mbox{\scriptsize blue}}})$. 
	By maximality, any edge  between $\bitop{\bp}$ and $\bibot{\rp}$ is green. 
	The same holds for the edges between $\bibot{\bp}$ and $\bitop{\rp}$. 
			 
	If $[\bibot{B'},\bitop{R'}]$ is empty, we finish by picking a maximum matching in $[\bibot{R'},\bitop{B'}]$.  
	We proceed analogously if $[\bibot{R'},\bitop{B'}]$ is empty.
	{Assuming that both are	non-empty we now pick} now pick a maximum matching in each of the green components of  $H- V(\red{M^{\mbox{\scriptsize red}}} \cup \blue{M^{\mbox{\scriptsize blue}}})$ that contain $[\bitop{\bp},\bibot{\rp}]$, $[\bibot{\bp},\bitop{\rp}]$. (If this is the same component, we only pick one matching.  If $R'$ or $B'$ is empty, we let the matchings be empty.)  Call these green matchings $\green{M^{\mbox{\scriptsize green}}_1}$ resp.~$\green{M^{\mbox{\scriptsize green}}_2}$. 
	Let $B'' :=B'-V(M^{\mbox{\scriptsize green}}_1 \cup M^{\mbox{\scriptsize green}}_2 )$ and $R'' :=R'-V(M^{\mbox{\scriptsize green}}_1 \cup M^{\mbox{\scriptsize green}}_2 )$.

	Observe that by the maximality of $\green{M^{\mbox{\scriptsize green}}_1}$ and~$\green{M^{\mbox{\scriptsize green}}_2}$,  if one of  $\bibot{R''}$, $\bitop{B''}$ is non-empty, then the other one is empty. The same holds for the sets $\bibot{B''}$, $\bitop{R''}$. Thus one of the two graphs $R''$, $B''$ is empty, say this is $B''$.

	The edges in $R''$ are green and blue. If $R''$ contains no green edges, we can pick another blue matching of maximum size and are done. Then again, if $R''$ contains a green edge, it follows by  maximality of $\green{M^{\mbox{\scriptsize green}}_1}$ and~$\green{M^{\mbox{\scriptsize green}}_2}$ that both of them are empty, which implies that there are no green edges in $R' \cup B'$.  In this case we ignore $\green{M^{\mbox{\scriptsize green}}_1}$ and~$\green{M^{\mbox{\scriptsize green}}_2}$ and finish as follows: By Lemma~\ref{lem:connected-matchings-for-two-colours}, $R'$  can be spanned by at most 3 vertex disjoint monochromatic connected matchings. This proves the claim.
\end{proof}

\begin{claim}
	\label{cla:no-monochromatic-component-included-in-another}
	Let $Y$ and $Z$ be monochromatic \connected components  of distinct colours such that $Y \cap Z$ is non-trivial. 
	Then $Y-Z$ is not empty. 

\end{claim}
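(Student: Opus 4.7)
The plan is to prove Claim~\ref{cla:no-monochromatic-component-included-in-another} by contradiction. Suppose the monochromatic components $Y$ (say red) and $Z$ (say blue) satisfy $Y \cap Z$ non-trivial yet $V(Y) \subset V(Z)$, so that $V(Y)$ meets both biparts of $H$. The aim is to exhibit two monochromatic components of distinct colours whose vertex sets together cover $V(H)$, contradicting Claim~\ref{cla:no-2-monochromatic-components-cover-everything}.

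First I would set $U := V(H) \sm V(Z)$ and observe $U \neq \es$: otherwise $Z$ alone spans $H$, and pairing $Z$ with any non-trivial red component (which exists by Claim~\ref{cla:at-least-3-components}) already contradicts Claim~\ref{cla:no-2-monochromatic-components-cover-everything}. Next, every edge between $V(Y)$ and $U$ is green: it cannot be red (its $U$-endpoint would join the red component $Y \subset V(Z)$, contradicting $U \cap V(Z) = \es$) and it cannot be blue (its $U$-endpoint would join $Z$). In particular the complete bipartite graphs $[\bitop{Y}, \bibot{U}]$ and $[\bibot{Y}, \bitop{U}]$ are entirely green.

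To exploit this, let $M^R$ be a maximum red matching in $Y$ and $M^B$ a maximum blue matching in $Z - V(M^R)$, and set $V_Y := V(Y) \sm V(M^R \cup M^B)$. By maximality $V_Y$ contains no red edge, and since $V_Y \subset V(Z)$ it contains no blue edge either; hence every edge between $\bitop{V_Y}$ and $\bibot{V_Y}$ is green. In the main case where both biparts of $V_Y$ are non-empty, this green complete bipartite subgraph is connected, so $V_Y$ sits in a single green component $G$. Moreover $U \subset V(G)$, since each vertex in $\bibot{U}$ (resp.\ $\bitop{U}$) is green-adjacent to every vertex in $\bitop{V_Y} \subset V(G)$ (resp.\ $\bibot{V_Y} \subset V(G)$) by the green edges established in the previous paragraph. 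Therefore $V(Z) \cup V(G) \supset V(Z) \cup U = V(H)$, so the blue component $Z$ and the green component $G$ together span $H$, contradicting Claim~\ref{cla:no-2-monochromatic-components-cover-everything}.

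The hard part is the boundary case, say $\bibot{V_Y} = \es$, in which $M^R \cup M^B$ saturates $\bibot{Y}$ and $V_Y$ sits in a single bipart; here the direct argument loses green contact with $\bitop{U}$ through $V_Y$. One can still check, by the same maximality reasoning, that every edge from $\bitop{V_Y}$ to $\bibot{H - V(M^R \cup M^B)}$ is green, producing a green component $G^{*}$ containing $\bitop{V_Y} \cup \bibot{(V(Z) \sm V(Y))} \cup \bibot{U}$. I would then split on whether $G^{*}$ reaches $\bitop{U}$ through some further green edge---which again yields $Z \cup G^{*}$ spanning $H$ and contradicts Claim~\ref{cla:no-2-monochromatic-components-cover-everything}---or whether every such potential bridge is red, forcing the remaining vertices into a two-coloured complete bipartite configuration where Lemma~\ref{lem:connected-matchings-for-two-colours} supplies at most three additional monochromatic connected matchings; these, together with $M^R$ and $M^B$, partition $V(H)$ into at most five monochromatic connected matchings, contradicting the standing counterexample assumption. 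Managing this boundary case cleanly is the main obstacle.
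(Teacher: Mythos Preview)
Your main-case argument is fine, but the detour through the matchings $M^R,M^B$ is what creates the boundary case in the first place, and your sketch for that case is not a proof: you never justify why, when $G^{*}$ fails to reach $\bitop{U}$, the leftover vertices form a \emph{balanced} two-coloured complete bipartite graph to which Lemma~\ref{lem:connected-matchings-for-two-colours} applies, nor why the resulting three matchings are each contained in a single monochromatic component of $H$. The five-matching fallback is also the wrong target: this claim is used later to derive structural contradictions, so one wants a clean contradiction to Claim~\ref{cla:no-2-monochromatic-components-cover-everything}, not another covering argument.

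The paper's proof avoids all of this by never taking matchings. With $X:=H-Z$ (your $U$), it uses Claim~\ref{cla:at-least-3-components} to get that $X$ is \emph{non-trivial}, so both $[\bitop{Y},\bibot{X}]$ and $[\bibot{Y},\bitop{X}]$ are non-trivial and entirely green. If these lay in the same green component $G$, then $G\cup Z$ would span $H$, contradicting Claim~\ref{cla:no-2-monochromatic-components-cover-everything}; hence they lie in distinct green components, and in particular $X$ has no green edge. Now comes the idea you are missing: by Claim~\ref{cla:at-least-3-components} there is a third non-trivial green component $G$, and since all of $\bitop{X}$ and $\bibot{X}$ already sit in the first two green components, $V(G)\subset V(Z)$. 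Thus $G$ satisfies the same hypotheses as $Y$ (namely $G\cap Z$ non-trivial and $G-Z$ empty), so rerunning the argument with $G$ in place of $Y$ shows that $X$ has no red edge either. Then every edge of the non-trivial complete bipartite graph $X$ is blue, and $X$ together with $Z$ spans $H$, contradicting Claim~\ref{cla:no-2-monochromatic-components-cover-everything}. This bootstrapping step --- replacing $Y$ by a green component contained in $Z$ --- is what eliminates the case analysis you are struggling with.
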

\begin{proof}
	Let $Y$ be a red component, $Z$ be a blue component, and let $X := H - (Y \cup Z)$. 
	Suppose that $Y-Z$ is empty. 
	We first note that all edges in  $[\bitop{Y \cap Z}, \bibot{X}]$ and $[\bibot{Y \cap Z}, \bitop{X}]$ are green. 
	Moreover, by Claim~\ref{cla:at-least-3-components}, there is another non-trivial blue component in $H$, which  implies that $X$ is non-trivial.
		
	The subgraphs  $[\bitop{Y \cap Z}, \bibot{X}]$ and $[\bibot{Y \cap Z}, \bitop{X}]$ cannot belong to the same green component, since otherwise $H$ is spanned by the union of said green component and $Z$, which is not possible by Claim~\ref{cla:no-2-monochromatic-components-cover-everything}. 
	Consequently, $X$ has no green edges.
	By  Claim~\ref{cla:at-least-3-components} there is   a  green non-trivial 
	component $G \subset  Y \cup Z$. 
	As $H = Z  \cup (Y -Z) \cup X$ and $Y-Z$ is empty, we obtain that  $G \cap Z$ is non-trivial  in $H$ and   $ G - Z \subset Y-Z$  is empty. 
	Thus  $G$ has the same properties as $Y$ with respect to $Z$ and we can repeat the same  arguments as above to obtain  that all edges in $X$ are blue. 
	But this is a contradiction to Claim~\ref{cla:no-2-monochromatic-components-cover-everything}, as $X$ and $Z$ together span~$H$.
\end{proof}

\begin{claim}
	\label{cla:one-has-at-most-3-components}
	There is a colour that has exactly three non-trivial  \connected components.
\end{claim}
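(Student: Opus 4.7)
The plan is to argue by contradiction. Suppose every colour has at least four non-trivial monochromatic connected components. Then Lemma~\ref{lem:strange-one} provides three monochromatic components $D_1, D_2, D_3$ whose union spans $H$.

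A simple counting argument shows that the $D_i$ cannot all share a single colour: otherwise, every additional component of that colour would need to be both vertex-disjoint from $D_1 \cup D_2 \cup D_3$ (since distinct components of one colour are vertex-disjoint) and contained in $V(H) = V(D_1) \cup V(D_2) \cup V(D_3)$, so the colour would have only three non-trivial components, contradicting the hypothesis.

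In the remaining cases (three distinct colours, or exactly two of the $D_i$ sharing a colour), I aim to construct five vertex-disjoint monochromatic connected matchings partitioning $V(H)$, thereby contradicting the overarching hypothesis that Lemma~\ref{lem:key-lemma} fails. The construction is greedy: take a maximum matching $M_1$ in $D_1$, then a maximum matching $M_2$ in $D_2 - V(M_1)$, and finally a maximum matching $M_3$ in $D_3 - V(M_1 \cup M_2)$; these are three pairwise vertex-disjoint monochromatic connected matchings. The task then reduces to covering the leftover $L = V(H) - V(M_1 \cup M_2 \cup M_3)$ with at most two further monochromatic connected matchings.

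The main obstacle is controlling $H[L]$. The key observation is that, since $D_i$ is a monochromatic connected component, every edge of its colour incident to a vertex of $V(D_i)$ lies inside $D_i$; combined with the maximality of $M_i$, this forbids edges of the colour of $D_i$ between two vertices both in $V(D_i) \cap L$ inside $H[L]$. Partitioning $L$ by the pattern of membership in the $V(D_i)$'s and applying this observation piece by piece, one can show that $H[L]$ is essentially two-edge-coloured on a complete bipartite ground, modulo vertices lying in the triple intersection $V(D_1) \cap V(D_2) \cap V(D_3)$, which would otherwise be isolated in $H[L]$. A refinement of the matching construction (for instance, arranging that $M_1$ already saturates the triple intersection inside $D_1$, using Claim~\ref{cla:no-monochromatic-component-included-in-another} to ensure this is possible) handles this corner case, and Lemma~\ref{lem:connected-matchings-for-two-colours} (or an unbalanced variant drawn from Lemma~\ref{lem:connected-components-for-two-colours}) then partitions $H[L]$ into at most two further monochromatic connected matchings, delivering the five-matching partition and hence the desired contradiction.
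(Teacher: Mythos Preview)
Your opening moves coincide with the paper's: assume every colour has at least four non-trivial components, invoke Lemma~\ref{lem:strange-one} to get three spanning components, and rule out the possibility that all three share one colour. After that, however, the paper and your plan diverge completely, and your plan has a real gap.

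The paper never tries to build five matchings here. Instead it exploits the already-proved structural Claims~\ref{cla:no-2-monochromatic-components-cover-everything} and~\ref{cla:no-monochromatic-component-included-in-another}. If two of the spanning components share a colour, a fourth component of that colour sits inside the third spanning component, contradicting Claim~\ref{cla:no-monochromatic-component-included-in-another}. If all three colours are distinct, the paper carries out a fairly delicate structural analysis (about a page) that repeatedly uses both claims to force a contradiction. No matchings are taken.

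Your alternative---pull maximum matchings $M_1,M_2,M_3$ from $D_1,D_2,D_3$ and then cover the leftover $L$ with two more connected matchings via Lemma~\ref{lem:connected-matchings-for-two-colours}---rests on the assertion that $H[L]$ is ``essentially two-edge-coloured''. This is false. With $D_1,D_2,D_3$ of colours red, green, blue respectively, take $v\in L$ in $D_1$ only (top side) and $w,u\in L$ on the bottom, with $w$ in $D_2$ only and $u$ in $D_3$ only. Then $vw$ must be blue (it cannot be red since $w\notin D_1$, nor green since $v\notin D_2$) and $vu$ must be green; symmetrically, a top vertex in $D_2$ only would send a red edge to $u$. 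So all three colours occur in $H[L]$, and none of these vertices lies in the triple intersection you single out as the only obstruction. Worse, those blue and green edges live in components \emph{different} from $D_3$ and $D_2$, so even finding two matchings that are \emph{connected} is problematic. Lemma~\ref{lem:connected-matchings-for-two-colours} simply does not apply, and the ``refinement'' and ``unbalanced variant'' you allude to are doing all the work without any indication of how they would go. The two-of-the-same-colour case, which the paper dispatches in two lines via Claim~\ref{cla:no-monochromatic-component-included-in-another}, is likewise not handled by your greedy scheme.
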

\begin{proof}
	We show that there is a colour with at most three non-trivial  components.
	This together with  Claim~\ref{cla:at-least-3-components} yields the desired result.
	So suppose otherwise. Then each colour has at least four non-trivial components. 
	By Lemma~\ref{lem:strange-one}, there are \connected components $X,$ $Y$ and $Z$ that together span~$H.$
			
	By assumption, not all of $X,$ $Y$ and $Z$ have the same colour. 
	If two of these components, say $X$ and $Y$, have the same colour, say red, then $H-(X \cup Y)$ contains a red component that is non-trivial, by the assumption that our claim is false. 
	The intersection of this red component with $Z$ is non-trivial.
	Hence we get a contradiction to Claim~\ref{cla:no-monochromatic-component-included-in-another}.
			
	So assume $X$ is red, $Y$ is blue and $Z$ is green. 
	We claim that (after possibly swapping top and bottom parts)
	\begin{equation}
		\label{equ:bot-YcapZ-small}
		\bibot{(Y \cap  Z) - X} \text{ is empty}.
	\end{equation}
	Indeed, otherwise $(Y \cap  Z) - X$ is non-trivial. 
	Then, as $[\bibot{X}, \bitop{(Y \cap Z) -X}]$ is non-trivial  and its edges are green and blue, we get $\bibot{X} \subset Y \cup Z$ since every vertex in $\bibot{X}$ sees a vertex in $\bitop{Y \cap Z}$. 
	In the same way we obtain $\bitop{X} \subset Y \cup Z$. 
	Thus $Z \cup Y$ is spanning, which is not possible by  Claim~\ref{cla:no-2-monochromatic-components-cover-everything}.
	This proves~\eqref{equ:bot-YcapZ-small}.
		
	By assumption, $H - X$ contains three non-trivial  red components $R_1$, $R_2$ and $R_3$, say. 
	For $i \neq j$, $[\bitop{R_i \cap (Y-Z)}, \bibot{R_j \cap (Z-Y)}]$ has no red, blue or green edges and thus is trivial. So for at most one $i \in \{1,2,3\}$ the subgraph $R_i \cap [\bitop{Y - Z}, \bibot{Z - Y} ]$ is non-trivial. The same holds for $[\bibot{R_i \cap (Y-Z)}, \bitop{R_j \cap (Z-Y)}]$. Consequently, and by the pigeonhole principle, we can assume that,
	\begin{equation}\label{equ:R'-R''}
	R_1 \cap [\bitop{Y - Z}, \bibot{Z - Y} ] \text{ and } R_1 \cap [\bibot{Y - Z}, \bitop{Z - Y} ] \text{  are both trivial.}
	\end{equation}
	As $R_1$ is non-trivial, we can suppose that  without loss of generality $R_1 \cap Y$ is non-trivial.
	Thus, by~\eqref{equ:bot-YcapZ-small} $ \bibot{R_1 \cap (Y-Z)}$  is non-empty. 
	Hence, by~\eqref{equ:R'-R''} we get:
	\begin{equation}\label{equ:R_1captopZ-Y-klein}
		| \bitop{R_1 } \cap \bitop{ Z-Y}| = 0.
	\end{equation}
	Moreover,  Claim~\ref{cla:no-monochromatic-component-included-in-another} (applied to $R_1$ and $Y$) implies that $R_1$ has at least one  vertex in  $\bitop{Z-Y}$ or $\bibot{Z-Y}$. 
	By~\eqref{equ:R_1captopZ-Y-klein} we have the latter case and hence
	\begin{equation}
		\label{equ:unten-RcapZ-Y-gross}
		\bibot{R_1 \cap (Z-Y)} \text{ and }  \bibot{R_1 \cap (Y-Z)} \text{    are each non-empty.}
	\end{equation}
	The fact that  $[\bitop{Y-(X \cup Z)},\bibot{R_1\cap (Z-Y)}]$ and $[\bitop{Z-(X\cup Y)},\bibot{R_1 \cap (Y-Z)}]$ only have red edges, together with~\eqref{equ:R'-R''} and~\eqref{equ:unten-RcapZ-Y-gross}, yields that
	\begin{equation}
		\label{equ:ZcapY}
		\bitop{Y  - (X \cup  Z)} \text{ and }  \bitop{Z  - (X \cup  Y)} \text{   are each empty}
	\end{equation}
	Now by~\eqref{equ:ZcapY} (and by the existence of $R_1$, $R_2$, $R_3$), we know that $\bitop{(Y \cap Z)-X}$ is non-empty. 
	So each vertex of $\bibot{X}$ has a neighbour in $\bitop{(Y \cap Z) -X}$ and hence $\bibot{X} \subset \bibot{Y \cup Z}$. 
	Since, by Claim~\ref{cla:no-2-monochromatic-components-cover-everything}, $H$ is not spanned by $Y \cup Z$, we have that $\bitop{X - (Y \cup Z)}$ is non-empty. 
	This and~\eqref{equ:unten-RcapZ-Y-gross} imply that $[\bitop{X - (Y \cup Z)}, \bibot{Y - (X \cup Z)}]$ and $[\bitop{X - (Y \cup Z)}, \bibot{Z - (X \cup Y)}]$ are non-trivial each. 
	As the edges of these subgraphs are green and blue respectively, there are green and blue components $G$ and $B$ such that $\bibot{H-X -[ (G \cap Y) \cup (B \cap Z)}]$ is empty. 

	Now let $G'$ be another non-trivial green component.
	Then $\bibot{G'- X}$ is empty, while $\bibot{G' \cap X}$ is non-empty. 
	By~\eqref{equ:ZcapY} it follows that $\bitop{G' -X}$ is empty, while $\bitop{G' \cap X}$ is non-empty. 
	This is not possible by Claim~\ref{cla:no-monochromatic-component-included-in-another} and  completes the proof.
\end{proof}
Using Claim~\ref{cla:one-has-at-most-3-components} we assume from now on that without loss of generality, colour red has exactly three non-trivial \connected components $\red{R_1},$ $\red{R_2}$ and $\red{R_3}$. 
For $i = 1,2,3$, let $\red{M_i}$ be a red matching of maximum size in $\red{R_i}$. 

The remaining graph $Y :=H- \red{M_1} -\red{M_2} -\red{M_3}$ has no red edges.
If $Y$ is trivial, then as $|\bitop Y|=|\bibot Y|$, the graph $Y$ is empty, and so we are done. 
If $Y$ can be spanned by two disjoint monochromatic connected matchings, we are also done, since in that case, we found five matchings which together span~$H.$ 
So we can assume that the colouring of $Y$ is split, by Lemma~\ref{lem:connected-matchings-for-two-colours} and as the edges of $Y$ are green and blue.
We denote the blue and green \connected components of $Y$ by $\blue{B'_1},$ $\blue{B'_2},$ 
respectively $\green{G'_1},$  $\green{G'_2}$, where $\bitop{\blue{B'_1}} = \bitop{\green{G'_1}}$, $\bitop{\blue{B'_2}} = \bitop{\green{G'_2}}$, $\bibot{\blue{B'_1}} = \bibot{\green{G'_2}}$, and $\bibot{\blue{B'_2}} = \bibot{\green{G'_1}}$.
Note that the subgraph
\begin{equation}
	\label{equ:M1M2M3-spanning}
	\blue{B'_1} \cup \blue{B'_2} \cup \red{M_1} \cup \red{M_2} \cup \red{M_3} \text{ is spanning in } H.
\end{equation}
By Lemma~\ref{lem:connected-matchings-for-two-colours}, $Y$ can be spanned by two blue matchings $M_4 \subset B'_1$, $M_5 \subset B'_2$ and an additional green matching. 
If any of the matchings $M_i$ is trivial, we can ignore it and still have a sufficiently large cover of $H$. 
Thus we get that
\begin{equation}
	\label{equ:big-guys}
	B'_1,~B'_2,~G'_1,~G'_2,~\red{M_1},~\red{M_2}, \text{ and } \red{M_3} \text{ are non-trivial.}
\end{equation}

Moreover, let $\blue{B_1}$ and $\blue{B_2}$ be the blue \connected components in $H$ that contain $\blue{B'_1}$ and $\blue{B'_2}$, respectively.
We define $\green{G_1}$ and $\green{G_2}$  analogously. 
If $\blue{B_1} = B_2$, we are done as $M_4 \cup M_5$ is a connected matching. 
This and symmetry imply
\begin{equation}
	\label{cla:B_1-not-B_2}
	\blue{B_1} \neq \blue{B_2} \text{ and } \green{G_1} \neq \green{G_2}.
\end{equation}
 
 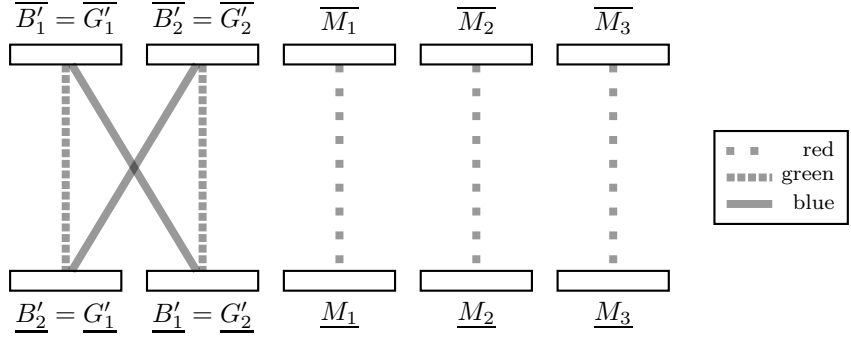
\begin{figure}
 	\centering
 	\begin{tikzpicture}[thick,
 	every node/.style={}, tedge/.style={opacity=0.4,line width=3},
 	gfit/.style={rectangle,draw,inner sep=0pt,text width=0.0cm},
 	rfit/.style={rectangle,draw,inner sep=0pt,text width=0.0cm},
 	rot/.style={myred,loosely dashed},
 	blau/.style={myblue},
 	gruen/.style={mygreen,densely dotted}
 	]
 	
		\begin{customlegend}[legend cell align=right, 
		legend entries={ 
			red, green, blue
		},
		legend style={at={(11.5,2)},font=\footnotesize}] 
		\addlegendimage{draw,  opacity=0.4,line width=3, loosely dashed}
		\addlegendimage{draw, opacity=0.4,line width=3, densely dotted}
		\addlegendimage{draw,  opacity=0.4,line width=3}
		\end{customlegend}
 	
 	\foreach \i in {1,2,...,15}  \node (t\i) at (0.6*\i,3) {};
 	\foreach \i in {1,2,...,15}  \node (b\i) at (0.6*\i,0) {};

 	\node [gfit,fit=(t1) (t3),label=above:{$\bitop{B'_1} = \bitop{G'_1}$}] {};
 	\node [gfit,fit=(t4) (t6),label=above:{$\bitop{B'_2} = \bitop{G'_2}$}] {};
 	\node [gfit,fit=(t7) (t9),label=above:{$\bitop{M_1}$}] {};
 	\node [gfit,fit=(t10) (t12),label=above:{$\bitop{M_2}$}] {};
 	\node [gfit,fit=(t13) (t15),label=above:{$\bitop{M_3}$}] {};
 	
 	\node [gfit,fit=(b1) (b3),label=below:{$\bibot{B'_2} = \bibot{G'_1}$}] {};
 	\node [gfit,fit=(b4) (b6),label=below:{$\bibot{B'_1} = \bibot{G'_2}$}] {};
 	\node [gfit,fit=(b7) (b9),label=below:{$\bibot{M_1}$}] {};
 	\node [gfit,fit=(b10) (b12),label=below:{$\bibot{M_2}$}] {};
 	\node [gfit,fit=(b13) (b15),label=below:{$\bibot{M_3}$}] {};
 	
 	\foreach \b in {8,11,14}
 	\path[draw, rot, tedge] (t\b) -- (b\b);
 	
 	\foreach \b in {2,5}
 	\path[draw, gruen, tedge] (t\b) -- (b\b);
 	
 	\path[draw, blau, tedge] (t2) -- (b5);
 	\path[draw, blau, tedge] (t5) -- (b2);
 	\end{tikzpicture}
 	\caption{The structure of the colouring before Claim~\ref{cla:something-is-red}}
 	\label{fig:something-is-red}
 \end{figure}
 The colouring so far is shown in Figure~\ref{fig:something-is-red}.
\begin{claim}
	\label{cla:something-is-red}
	For each $i = 1,2,3$ we have that 
	\begin{enumerate}[\rm (a)]
		\item \label{itm:something-is-red-a} 
		      \begin{itemize}
		      	\item if $|\bitop{\red{M_i}} \setminus \bitop{\green{G_1} \cup \green{G_2}}| > 0$, then 
		      	      $\bibot{\blue{B'_1}} \subset \bibot{\red{R_i}} $ or $\bibot{\blue{B'_2}} \subset  \bibot{\red{R_i}};$      
		      	      		      	      		      	      
		      	\item if $|\bitop{\red{M_i}} \setminus \bitop{\blue{B_1} \cup \blue{B_2}}| > 0$, then
		      	      $\bibot{\green{G'_1}} \subset \bibot{\red{R_i}} $ or $\bibot{\green{G'_2}} \subset  \bibot{\red{R_i}};$ 
		      	      		      	      		      	      
		      \end{itemize}         
		\item  \label{itm:something-is-red-b}  
		      \begin{itemize}
		      	\item if $|\bibot{\red{M_i}} \setminus \bibot{\green{G_1} \cup \green{G_2}}| > 0$, then 
		      	      $\bitop{\blue{B'_1}} \subset \bitop{\red{R_i}}$ or $\bitop{\blue{B'_2}} \subset  \bitop{\red{R_i}};$ 
		      	\item if $|\bibot{\red{M_i}} \setminus \bibot{\blue{B_1} \cup \blue{B_2}}| > 0$, then
		      	      $\bitop{\green{G'_1}} \subset \bitop{\red{R_i}} $ or $\bitop{\green{G'_2}} \subset  \bitop{\red{R_i}};$ 
		      \end{itemize}
		\item  \label{itm:something-is-red-c}  
		      \begin{itemize}
		      	\item if $|\bitop{\red{M_i}} \setminus \bitop{\green{G_1} \cup \green{G_2} \cup  \blue{B_1} \cup  \blue{B_2}}|   > 0,$
		      	      then $\bibot{\blue{B'_1} \cup \blue{B'_2}} = \bibot{\green{G'_1} \cup \green{G'_2}} \subset \bibot{ \red{R_i} };$
		      	\item if $|\bibot{\red{M_i}} \setminus \bibot{\green{G_1} \cup \green{G_2} \cup  \blue{B_1} \cup  \blue{B_2}}|   > 0,$
		      	      then $\bitop{\blue{B'_1} \cup \blue{B'_2}} = \bitop{\green{G'_1} \cup \green{G'_2}} \subset \bitop{ \red{R_i} }.$
		      \end{itemize}
	\end{enumerate}
\end{claim}
\begin{proof}
	For the first part of (a), assume $|\bitop{\red{M_1}} \setminus \bitop{G_1 \cup G_2}| > 0$. 
	Note that there is no green edge between $\bitop{\red{M_1}} \setminus \bitop{G_1 \cup G_2}$  and $\bibot{G'_1}.$ 
	First assume that $\bitop{\red{M_1} \cap B_1} \setminus \bitop{G_1 \cup G_2}$ 
	 is non-empty.
	Then, by~\eqref{cla:B_1-not-B_2}, any edge between $\bitop{\red{M_1} \cap B_1} \setminus \bitop{G_1 \cup G_2}$ and $\bibot{B'_2}=\bibot{G_1'}$ is red. 
	So, by~\eqref{equ:big-guys} the result follows.
	So we can assume that this is not true. 
	Similarly the result holds if $|\bitop{\red{M_1} \cap B_2} \setminus \bitop{G_1 \cup G_2}| > 0$. 
		Therefore we can assume that $\bitop{\red{M_1}} \setminus \bitop{B_1 \cup B_2 \cup G_1 \cup G_2}$ is non-empty.
		In this case, since all edges between $\bitop{M_1}\setminus \bitop{G_1 \cup G_2 \cup B_1 \cup B_2}$ and $\bibot{B'_1}$ are red, the result follows again by~\eqref{equ:big-guys}.
	Statement (b) and the second part of (a) follow similarly.
				 
	For the first part of (\ref{itm:something-is-red-c}), note that any edge between $\bitop{\red{M_i}} \setminus \bitop{\green{G_1} \cup \green{G_2} \cup  \blue{B_1} \cup  \blue{B_2}}$ and $\bibot{\blue{B'_1} \cup \blue{B'_2}} = \bibot{\green{G'_1} \cup \green{G'_2}}$ has to be red and use ~\eqref{equ:big-guys}. 
	The second part of (c) is analogous.
\end{proof}
By Claim~\ref{cla:at-least-3-components} there are green and blue non-trivial components $\green{G_3} \neq G_1,G_2$ and  $\blue{B_3} \neq B_1,B_2$ in $H.$ 
 
\begin{claim}
	\label{cla:blue-and-green-touch}
	It holds that $|V(\green{G_3} \cap \blue{B_3} \cap (M_1 \cup M_2 \cup M_3)) | > 0$. 
\end{claim}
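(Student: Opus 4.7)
The plan is to argue by contradiction, assuming $V(\green{G_3}) \cap V(\blue{B_3}) = \emptyset$. First I would localise both $\green{G_3}$ and $\blue{B_3}$ inside the red-matched region. The split colouring of $Y$ together with~\eqref{equ:big-guys} forces every vertex of $V(Y)$ to belong to one of the non-trivial blue components $\blue{B'_1}, \blue{B'_2}$ and one of the non-trivial green components $\green{G'_1}, \green{G'_2}$, so $V(Y) \subseteq V(\blue{B_1 \cup B_2}) \cap V(\green{G_1 \cup G_2})$. Since $\green{G_3}, \blue{B_3}$ are distinct components, $V(\green{G_3}) \cup V(\blue{B_3}) \subseteq V(\red{M_1 \cup M_2 \cup M_3})$, so the claim reduces to showing $V(\green{G_3}) \cap V(\blue{B_3}) \neq \emptyset$.

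Next I would analyse the edges of $[\bitop{\green{G_3}}, \bibot{\blue{B_3}}]$. A green edge there would place the bottom endpoint into $V(\green{G_3}) \cap V(\blue{B_3})$, contradicting the assumption, and a blue edge is excluded symmetrically; hence every such edge is red. This forces $\bitop{\green{G_3}}$ and $\bibot{\blue{B_3}}$ into a single red component, and thus into $\bitop{\red{M_i}}$ and $\bibot{\red{M_i}}$ for some index $i$. Running the same argument on $[\bibot{\green{G_3}}, \bitop{\blue{B_3}}]$ yields an index $j$ with $\bibot{\green{G_3}} \subseteq \bibot{\red{M_j}}$ and $\bitop{\blue{B_3}} \subseteq \bitop{\red{M_j}}$. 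Now Claim~\ref{cla:something-is-red}(a), applied to $v \in \bitop{\green{G_3}} \subseteq \bitop{\red{M_i}} \setminus \bitop{\green{G_1 \cup G_2}}$, yields $\bibot{\blue{B'_1}} \subseteq \bibot{\red{R_i}}$ or $\bibot{\blue{B'_2}} \subseteq \bibot{\red{R_i}}$; applied to $v' \in \bitop{\blue{B_3}} \subseteq \bitop{\red{M_j}} \setminus \bitop{\blue{B_1 \cup B_2}}$ it yields the analogous statement for $\bibot{\red{R_j}}$. Part (b) produces mirror statements for $\bitop{\red{R_i}}$ and $\bitop{\red{R_j}}$.

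In the subcase $i \neq j$, the disjointness of red components combined with the non-triviality~\eqref{equ:big-guys} of $\blue{B'_1}, \blue{B'_2}$ forces the two pairs of conclusions above to pick different sets, so that after relabelling $\bibot{Y} \subseteq \bibot{\red{R_i}} \cup \bibot{\red{R_j}}$ and $\bitop{Y} \subseteq \bitop{\red{R_i}} \cup \bitop{\red{R_j}}$. Applying Claim~\ref{cla:something-is-red} once more to the third matching $\red{M_k}$ (with $\{i,j,k\}=\{1,2,3\}$) would place $\bibot{\blue{B'_s}}$ or $\bitop{\blue{B'_s}}$ into $V(\red{R_k})$, which is incompatible with red disjointness and non-triviality; the only way out is that the premises fail, forcing $V(\red{M_k}) \subseteq V(\blue{B_1 \cup B_2}) \cap V(\green{G_1 \cup G_2})$. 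The subcase $i = j$ is similar, with the two remaining red matchings playing the role of $\red{M_k}$.

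From this tight structural picture I plan to assemble five vertex-disjoint monochromatic connected matchings partitioning $V(H)$, contradicting the standing assumption that no such partition exists. Three of these are $\red{M_1}, \red{M_2}, \red{M_3}$, accounting for $V(\red{M_1 \cup M_2 \cup M_3})$, and the other two are maximum matchings in the green components $\green{G_1}$ and $\green{G_2}$, which between them are to saturate $V(Y) = V(\green{G'_1 \cup G'_2})$. The saturation is feasible because the complete bipartite pieces $\green{G'_1}, \green{G'_2}$, enlarged by the forced extensions of $\green{G_1}, \green{G_2}$ into $V(\red{M_k})$ and supported by the absorption of the off-diagonal biparts $\bibot{\blue{B'_s}}, \bitop{\blue{B'_s}}$ into $\red{R_i} \cup \red{R_j}$, admit matchings of the required sizes via a Hall-type argument. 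I expect the main obstacle to lie precisely in this last saturation step: one must carefully compare the bipart sizes of $\blue{B'_1}, \blue{B'_2}$ with the deficiencies created when one takes maximum matchings in $\green{G_1}, \green{G_2}$, and verify that no side of $V(Y)$ is left uncovered under any configuration permitted by the structural constraints above.
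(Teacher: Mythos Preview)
Your structural analysis in the first three paragraphs is correct and in fact slightly cleaner than the paper's: by observing that every edge of $[\bitop{G_3}, \bibot{B_3}]$ must be red you pin down all of $\bitop{G_3}$ and $\bibot{B_3}$ to a single $\red{M_i}$, whereas the paper only locates non-trivial subgraphs $B'_3\subset B_3$, $G'_3\subset G_3$ inside individual $\red{M_i}$'s and then argues about index coincidences. (A minor point: the subcase $i=j$ is not ``similar'' but immediately impossible, since then $G_3\subset R_i$ with $G_3\cap R_i$ non-trivial and $G_3-R_i$ empty, contradicting Claim~\ref{cla:no-monochromatic-component-included-in-another}.) Your use of Claim~\ref{cla:something-is-red} to obtain $V(Y)\subseteq V(R_i\cup R_j)$ and then $V(\red{M_k})\subseteq V(G_1\cup G_2)\cap V(B_1\cup B_2)$ is also correct and matches the paper.

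The gap is in your final paragraph. You propose to exhibit five disjoint connected matchings, namely $\red{M_1},\red{M_2},\red{M_3}$ together with maximum green matchings in $G_1$ and $G_2$ that are to saturate $V(Y)$. But once you keep all three red matchings, the green matchings are confined to $G'_1$ and $G'_2$, and a matching in $G'_1$ together with one in $G'_2$ covers $V(Y)$ only if $|\bitop{G'_1}|=|\bibot{G'_1}|$ --- a numerical condition that is \emph{not} implied by anything you derived (your constraints are purely about component memberships, not about bipart sizes). Your phrase ``forced extensions of $G_1,G_2$ into $V(\red{M_k})$'' does not help: using any vertex of $V(\red{M_k})$ in a green matching destroys disjointness from $\red{M_k}$. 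No Hall-type argument rescues this, and the saturation step as stated simply fails.

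The paper's endgame, starting from exactly the same structural picture you reached, does not try to build matchings at all. Instead (say in the case $B'_1\subset R_i$, $B'_2\subset R_j$) it considers the edges between $\bitop{B_3}\subset\bitop{R_i}$ and $\bibot{B'_2}=\bibot{G'_1}\subset\bibot{R_j}$: these are neither red nor blue, hence green, so $\bitop{B_3}\subset\bitop{G_1}$. Then every edge from $\bitop{B_3}$ to $\bibot{R_k}$ is green (not red since $k\neq i$, not blue since $\bibot{R_k}\cap\bibot{B_3}=\emptyset$), forcing $\bibot{R_k}\subset\bibot{G_1}$; the symmetric argument with $\bibot{B_3}$ gives $\bitop{R_k}\subset\bitop{G_1}$. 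Thus $R_k\cap G_1$ is non-trivial and $R_k-G_1$ is empty, contradicting Claim~\ref{cla:no-monochromatic-component-included-in-another}. You should replace your fourth paragraph with this short colour-forcing argument.
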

\begin{proof}
	Assume otherwise. 
	That is, assume
	$$ |V(\green{G_3} \cap \blue{B_3} \cap (M_1 \cup M_2 \cup M_3)) | = 0.$$
	The components $B_3$ and $G_3$ do not meet with $\blue{B_1'} \cup \blue{B_2'} = \green{G_1'} \cup \green{G_2'}$ and by \eqref{equ:M1M2M3-spanning}, there are no  vertices outside of $ \blue{B'_1} \cup \blue{B'_2} \cup \red{M_1} \cup \red{M_2} \cup \red{M_3}$. 
	We conclude that $B_3 \cap (M_1 \cup M_2 \cup M_3)$ and $G_3 \cap (M_1 \cup M_2 \cup M_3)$ are each non-trivial. 
	Hence there are indices $i,i',j,j'$ such that there is a blue non-trivial subgraph $\blue{B_3'} \subset \blue{B_3}$ and a green non-trivial subgraph $\green{G_3'} \subset \green{G_3}$ such that  $\bitop{\blue{B_3'}} \subset \bitop{\red{M_i}}$ and $\bibot{\blue{B_3'}} \subset \bibot{\red{M_{i'}}}$, and $\bitop{\green{G_3'}} \subset \bitop{\red{M_j}}$ and $\bibot{\green{G_3'}} \subset \bibot{\red{M_{j'}}}$. 
	Actually, we can choose these indices such that $  i \neq i'$ and  $j \neq j'$. 
	Since if $i= i'$, say, Claim~\ref{cla:no-monochromatic-component-included-in-another} yields that $(B_3 \cap H) \setminus M_i$ is not empty and therefore, by~\eqref{equ:M1M2M3-spanning}, there is some index $k \neq i$ such that $B_3 \cap M_k$ is not empty, which allows us to swap $i'$ for $k$.

	For an index $k \neq i$, the edges between $\bitop{\blue{B_3'} \ \subset \bitop{\red{R_1}}cap \red{M_i}}$ and $\bibot{\green{G_3'} \cap \red{M_k}}$ are blue and green.
	As by our initial assumption $|V(\green{G_3} \cap \blue{B_3} \cap (M_1 \cup M_2 \cup M_3)) | = 0,$  this implies that $|\bibot{\green{G_3} \cap  \red{M_k}}| = 0$. 
	In the same way we obtain that $|\bitop{\green{G_3}  \cap  \red{M_k}}| = 0$ for $k \neq i'$ or $|\bitop{B_3'\cap M_i}| = 0$, but the latter cannot happen by the choice of $B'_3$.
	Hence we have $i = j'$ and $i' = j$; in other words, $$|\bibot{\red{M_i} \cap \green{G_3}}| >0, ~ |\bitop{\red{M_j} \cap \green{G_3}}| >0, ~|\bitop{\red{M_i} \cap \blue{B_3}}| > 0  \text{ and }  |\bibot{\red{M_j} \cap \blue{B_3}}| > 0.$$
			  
	So by Claim~\ref{cla:something-is-red} (a) and (b), either we have $\blue{B'_1} \subset \red{R_i}$ and $\blue{B'_2} \subset \red{R_j}$, or we have $\green{G'_1} \subset \red{R_i}$ and $\green{G'_2} \subset \red{R_j}$.
	Indeed, the fact that $|\bibot{\red{M_i} \cap \green{G_3}}| >0$ together with Claim~\ref{cla:something-is-red} (b) implies that one of $\bitop{B'_1} = \bitop{G'_1} \subset \bitop{R_i}$, $\bitop{B'_2} = \bitop{G'_2} \subset \bitop{R_i}$ holds. Without loss of generality, we assume the latter. 
		Next, as $|\bitop{\red{M_i} \cap \green{B_3}}| >0$, Claim~\ref{cla:something-is-red} (a) implies that $\bibot{G'_1} = \bibot{B'_2} \subset \bibot{R_i}$ or $\bibot{G'_2} = \bibot{B'_1} \subset \bibot{R_i}$. Without loss of generality, we assume the former.
		We repeat the same with index $j$, and since we already know that $B'_2 \subset R_i$, the output of Claim~\ref{cla:something-is-red} has to be $\bibot{B'_1} = \bibot{G'_2} \subset \bibot{R_j}$ for $|\bitop{\red{M_j} \cap \green{G_3}}| >0$ and $\bitop{B'_1} = \bitop{G'_1} \subset \bitop{R_j}$ for $|\bibot{\red{M_j} \cap \blue{B_3}}| > 0$. For the remainder, let us assume that $\blue{B'_1} \subset \red{R_i}$ and $\blue{B'_2} \subset \red{R_j}$.
		
	Then $ \green{G'_1} \cap \red{R_k} = \emptyset = \green{G'_2} \cap \red{R_k}$, where $k$ is the third index, which together with Claim~\ref{cla:something-is-red} (a) and (b), gives that $|\red{R_k} \cap (\green{G_3} \cup \blue{B_3})| = 0$.
	The edges between $\bibot{\blue{B'_2}} = \bibot{\green{G'_1}} \subset \bibot{\green{G_1} \cap R_j}$ and $\bitop{\blue{B'_3} \cap R_i}$ have to be green, which implies $\bitop{\blue{B'_3}} \subset \bitop{\green{G_1}}$.
	As any edge between $\bitop{\blue{B'_3}}$ and $\bibot{\red{R_k} -  \blue{B_3}}$ has to be green this implies $|\bibot{\red{R_k} \cap \green{G_1} }| > 0$ since $R_k$ is non-trivial and $|\bibot{\red{R_k} \cap \blue{B_3} }| = 0$. This also implies that $|\bibot{R_k - G_1|} = 0$.

	By repeating the same argument with $\bitop{\blue{B'_1}} = \bitop{\green{G'_1}} \subset \bitop{\green{G_1}}$ and $\bibot{\blue{B'_3}}$, it follows that $|\bitop{\red{R_k} \cap \green{G_1} }|> 0$ and $|\bitop{R_k - G_1|} = 0$. 
	So $R_k \cap G_1$ is  non-trivial and $\red{R_k} - \green{G_1}$ is empty, a contradiction to Claim~\ref{cla:no-monochromatic-component-included-in-another}.
\end{proof}
Claim~\ref{cla:blue-and-green-touch} and the symmetry between the $M_i$ in both biparts allow us to assume that without loss of generality
\begin{equation} 
	\label{equ:1}
	|\bitop{\red{M_3} \cap \green{G_3} \cap \blue{B_3}}| > 0.
\end{equation}
This implies $|\bitop{\red{M_3}} \setminus \bitop{\green{G_1} \cup \green{G_2} \cup  \blue{B_1} \cup  \blue{B_2}}|   > 0$ and thus by Claim~\ref{cla:something-is-red}(\ref{itm:something-is-red-c}) with $i = 3$ we obtain   \begin{equation}
\label{equ:2}
\bibot{\blue{B'_1} \cup \blue{B'_2}} = \bibot{\green{G'_1} \cup \green{G'_2}} \subset \bibot{ \red{R_3} }.
\end{equation} 
This implies that $(\bibot{\red{R_1} \cup R_2}) \cap (\bibot{\green{G'_1} \cup \green{G'_2}}) = \emptyset$.
Since the edges between $\bitop{\red{M_3} \cap \green{G_3} \cap \blue{B_3}}$ and $\bibot{\red{R_1} \cup \red{R_2}}$ are coloured green and blue, we have
by (\ref{equ:1}) that
\begin{equation}
	\label{equ:3}
	\bibot{\red{M_1} \cup \red{M_2}}  \subset \bibot{\red{R_1} \cup \red{R_2}} \subset \bibot{\green{G_3} \cup \blue{B_3}}.
\end{equation}
So, by~\eqref{equ:big-guys} and Claim~\ref{cla:something-is-red}(\ref{itm:something-is-red-b}) with $i = 1$,  we can assume that without loss of generality
\begin{equation}
	{\label{equ:4}
		\bitop{\blue{B'_1}} = \bitop{\green{G'_1}} \subset \bitop{\red{R_1}}} 
\end{equation}
and hence by~\eqref{equ:big-guys} and Claim~\ref{cla:something-is-red}(\ref{itm:something-is-red-b}) with $i = 2$ it follows that
\begin{equation}
	{\label{equ:5}
		\bitop{\blue{B'_2}} = \bitop{\green{G'_2}} \subset \bitop{\red{R_2}}}.
\end{equation}
\begin{figure}
	\centering
	\centering\scalebox{0.7}{
		\begin{tikzpicture}[thick,
		every node/.style={}, tedge/.style={opacity=0.4,line width=3},
		gfit/.style={rectangle,draw,inner sep=0pt,text width=0.0cm},
		rfit/.style={rectangle,draw,inner sep=0pt,text width=0.0cm},
		rot/.style={myred,loosely dashed},
		blau/.style={myblue},
		gruen/.style={mygreen,densely dotted}
		]
		
		\begin{customlegend}[legend cell align=right, 
		legend entries={ 
			red, green, blue
		},
		legend style={at={(18,1.13)},font=\footnotesize}] 
		\addlegendimage{draw,  opacity=0.4,line width=3, loosely dashed}
		\addlegendimage{draw, opacity=0.4,line width=3, densely dotted}
		\addlegendimage{draw,  opacity=0.4,line width=3}
		\end{customlegend}
		
		\foreach \i in {1,2,...,18}  \node (t\i) at (1*\i,3) {};
		\foreach \i in {1,2,...,15}  \node (b\i) at (1*\i,0) {};

		\node [gfit,fit=(t1) (t3),label=above:{ $\bitop{B'_1} = \bitop{G'_1} \subset \bitop{{R_1}}$}] {};
		\node [gfit,fit=(t4) (t6),label=above:{$\bitop{B'_2} = \bitop{G'_2} \subset \bitop{\red{R_2}}$}] {};
		\node [gfit,fit=(t7) (t9),label=above:{$\bitop{M_1}$}] {};
		\node [gfit,fit=(t10) (t12),label=above:{$\bitop{M_2}$}] {};
		\node [gfit,fit=(t13) (t15),label=above:{$\bitop{M_3 \sm (G_3 \cup B_3)}$}] {};			\node [gfit,fit=(t16) (t18),label=above:{$\bitop{M_3 \cap G_3 \cap B_3} $}] {};
		
		\node [gfit,fit=(b1) (b3),label=below:{ $\bibot{B'_2}=\bibot{G'_1} \subset \bibot{ \red{R_3} }$}] {};
		\node [gfit,fit=(b4) (b6),label=below:{ $\bibot{B'_2}= \bibot{G'_1} \subset \bibot{ \red{R_3}}$}] {};
		\node [gfit,fit=(b7) (b9),label=below:{$\bibot{M_1}  \subset \bibot{\green{G_3} \cup \blue{B_3} } $}] {};
		\node [gfit,fit=(b10) (b12),label=below:{$\bibot{M_2}  \subset \bibot{\green{G_3} \cup \blue{B_3} }$}] {};
		\node [gfit,fit=(b13) (b15),label=below:{$\bibot{M_3}$}] {};
		
		\foreach \b in {7,10,13}
		\path[draw, rot, tedge] (t\b) -- (b\b);
		
		\path[draw, rot, tedge] (t16) -- (b1);
		\path[draw, rot, tedge] (t16) -- (b4);
		
		\foreach \b in {2,5}
		\path[draw, gruen, tedge] (t\b) -- (b\b);
		\path[draw, gruen, tedge] (t17) -- (b8);
		\path[draw, gruen, tedge] (t17) -- (b11);

		\path[draw, blau, tedge] (t3) -- (b6);
		\path[draw, blau, tedge] (t6) -- (b3);
		\path[draw, blau, tedge] (t18) -- (b9);
		\path[draw, blau, tedge] (t18) -- (b12);
		\end{tikzpicture}}
	\caption{The structure of the colouring after~\eqref{equ:5}.}
	\label{fig:5}
\end{figure}
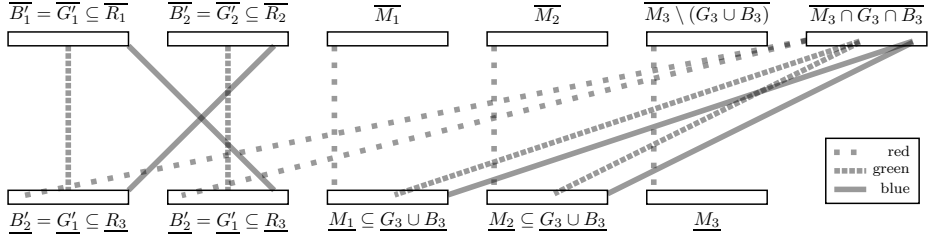
The structure of the colouring so far is sketched in Figure~\ref{fig:5}.
The assertions~\eqref{equ:4} and~\eqref{equ:5} imply that 
$\bitop{\red{R_3} } \cap \bitop{\green{G'_1} \cup \green{G'_2}} = \emptyset$.
Suppose that there is an $x \in \bitop{\red{R_1} \cup \red{R_2}} \setminus \bitop{\green{G_1} \cup \green{G_2} \cup  \blue{B_1} \cup  \blue{B_2}}$. 
By (\ref{equ:2}), the edges between $x$ and $\bibot{\green{G'_1} \cup \green{G'_2}} = \bibot{{B'_1} \cup {B'_2} }$ are not red, and neither green or blue by choice of $x$. 
As $\green{G'_1}$ and $\green{G'_2}$ are both non-trivial in $H$ by 
\eqref{equ:big-guys} and $H$ is complete, we obtain a contradiction. 
Hence
\begin{equation}
	\label{equ:6}
	\bitop{\red{M_1} \cup \red{M_2}} \setminus \bitop{\green{G_1} \cup \green{G_2} \cup  \blue{B_1} \cup  \blue{B_2}}   = \emptyset.
\end{equation}
In the same fashion, suppose there is an $x \in (\bibot{\red{M_3}}  \setminus \bibot{\green{G_1} \cup \green{G_2} }) \cup (\bibot{\red{M_3}}  \setminus \bibot{ \blue{B_1} \cup  \blue{B_2}})$. By~\eqref{equ:4} and~\eqref{equ:5}, the edges between $x$ and $\bitop{\blue{B'_1}} = \bitop{\green{G'_1}}$ respectively $\bitop{\blue{B'_2}} = \bitop{\green{G'_2}}$ are neither green nor blue by choice of $x$. Again, using~\eqref{equ:big-guys} and the completeness of $H$, we obtain a contradiction as
\begin{equation}
	\label{equ:7}
	\bibot{\red{M_3}}  \setminus \bibot{\green{G_1} \cup \green{G_2} } =  \bibot{\red{M_3}}  \setminus \bibot{ \blue{B_1} \cup  \blue{B_2}} = \emptyset.
\end{equation}
Finally, suppose there is an $x \in \bitop{B_3 \cup G_3} \cap \bitop{M_1 \cup M_2} $. By~\eqref{equ:big-guys}, $x$ sees vertices in $\bibot{M_3}$. This, however,  contradicts~\eqref{equ:7}  and thus
\begin{equation}\label{equ:B3cupG3capM1cupM2empty}
	\bitop{B_3 \cup G_3} \cap \bitop{M_1 \cup M_2} = \emptyset.
\end{equation}

Next, we restore the symmetry between the colours.
\begin{claim}
	\label{cla:all-have-exactly-3-components}
	Each colour has exactly three  \connected components.
\end{claim}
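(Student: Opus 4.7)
The plan is to argue by contradiction: assume green has a fourth non-trivial connected component $G_4$ (the case of blue is symmetric). The goal is $V(G_4)\subset V(B_3)$, since then $G_4\cap B_3$ is non-trivial while $G_4-B_3$ is empty, contradicting Claim~\ref{cla:no-monochromatic-component-included-in-another} applied with $Y:=G_4$ and $Z:=B_3$.

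First I would localise $G_4$. Since $\bitop{B'_1\cup B'_2}=\bitop{G'_1\cup G'_2}\subset\bitop{G_1\cup G_2}$ by~\eqref{equ:4}--\eqref{equ:5} and $\bibot{B'_1\cup B'_2}=\bibot{G'_1\cup G'_2}\subset\bibot{G_1\cup G_2}$ by the definition of $G'_1,G'_2$, no vertex of $G_4$ lies in $V(B'_1\cup B'_2)$; hence $V(G_4)\subset V(M_1\cup M_2\cup M_3)$. By~\eqref{equ:7}, $\bibot{G_4}\cap\bibot{M_3}=\emptyset$, so $\bibot{G_4}\subset\bibot{M_1\cup M_2}$; combining this with~\eqref{equ:3} and $G_4\cap G_3=\emptyset$ yields $\bibot{G_4}\subset\bibot{B_3}$. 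Analogously,~\eqref{equ:6} forces $\bitop{G_4}\cap\bitop{M_1\cup M_2}\subset\bitop{B_1\cup B_2}$, in particular disjoint from $\bitop{B_3}$.

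The core step is to rule out $\bitop{G_4}\cap\bitop{M_1\cup M_2}\neq\emptyset$, in the template of the proof of~\eqref{equ:B3cupG3capM1cupM2empty}. For $v\in\bitop{G_4}\cap\bitop{M_i}$ with $i\in\{1,2\}$, the edges from $v$ to $\bibot{M_3}$ are neither red (different red components) nor green (by~\eqref{equ:7} they would force $v\in G_1\cup G_2$, contradicting $v\in G_4$), so they are all blue, putting $\bibot{M_3}$ into the blue component of $v$, say $B_1$. Running the same argument at a vertex of $\bitop{B'_2}\subset B_2$ (which exists by~\eqref{equ:big-guys} and~\eqref{cla:B_1-not-B_2}), and invoking~\eqref{equ:B3cupG3capM1cupM2empty} to rule out $B_3$-style interference, produces a contradiction with $B_1\neq B_2$. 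Hence $\bitop{G_4}\subset\bitop{M_3}$. Finally, for $v\in\bitop{G_4}\cap\bitop{M_3}$, the edges from $v$ into $\bibot{B_3}$ are non-red (different red components) and cannot all be green unless $\bibot{B_3}\subset\bibot{G_4}$, since any green edge from $v$ to a vertex of $\bibot{G_j}$ ($j\in\{1,2,3\}$) would place $v\in G_j$. If at least one such edge is blue, then $v\in B_3$; in the extremal case $\bibot{B_3}=\bibot{G_4}$, I would instead analyse $v$'s (all-blue) edges to $\bibot{G_3}$, which isolate $v$'s blue component into a single $B_j$ and collide with the existence of the other blue component through~\eqref{cla:B_1-not-B_2}. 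Thus $V(G_4)\subset V(B_3)$, closing the contradiction. The analogous statement for blue follows by swapping the colours throughout.

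\smallskip
The main obstacle will be the core step and its extremal subcase: ruling out $\bitop{G_4}\cap\bitop{M_1\cup M_2}\neq\emptyset$ demands a careful case analysis over which blue component hosts $v$, and the extremal subcase $\bibot{B_3}=\bibot{G_4}$ needs additional bookkeeping that squeezes $v$'s blue component through the structure of $\bibot{G_3}$ together with the biparts of $B'_1,B'_2$.
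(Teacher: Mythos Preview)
Your proposal has a genuine gap: you only handle a \emph{non-trivial} fourth component $G_4$, but the claim asserts that each colour has exactly three components in total, including trivial ones. Your entire strategy hinges on reaching a contradiction via Claim~\ref{cla:no-monochromatic-component-included-in-another} applied with $Y:=G_4$ and $Z:=B_3$; that claim requires $G_4\cap B_3$ to be non-trivial, which fails outright if $G_4$ is a single vertex. The paper, by contrast, explicitly treats the case $\bibot{G_4}=\emptyset$ (in fact this is the longer half of its argument), and this case is essential: the subsequent partition of $\bitop{H}$ and $\bibot{H}$ into the pieces $\cpt{i}{j}{k}$, $\cp{i}{j}{k}$ needs every vertex to lie in one of $G_1,G_2,G_3$.

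Even within the non-trivial case your route is rougher than necessary. Once you have $\bibot{G_4}\subset\bibot{B_3}$ from~\eqref{equ:3}, a single glance at the edges between any vertex of $\bibot{G_4}$ and $\bitop{G'_1}\cup\bitop{G'_2}$ (using~\eqref{cla:B_1-not-B_2},~\eqref{equ:4},~\eqref{equ:5}) forces $\bibot{G_4}\subset\bibot{B_1\cup B_2}$, and these two containments already give $\bibot{G_4}=\emptyset$. There is no need to chase $\bitop{G_4}$ into $\bitop{B_3}$. Your ``core step'' sketch (``running the same argument at a vertex of $\bitop{B'_2}$'') does not produce the advertised contradiction with $B_1\neq B_2$: a vertex $w\in\bitop{B'_2}$ lies in $G_2$, so its edges to $\bibot{M_3}$ may legitimately be green, not blue. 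And your extremal subcase $\bibot{B_3}=\bibot{G_4}$ is not resolved by ``analysing $v$'s edges to $\bibot{G_3}$'': those edges are indeed blue, but they only pin $v$ to $B_1$ or $B_2$, not to $B_3$, so your target $V(G_4)\subset V(B_3)$ fails there. The paper avoids all of this by finding direct colour contradictions (no valid colour left for certain edges) rather than aiming for an inclusion into $B_3$.
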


\begin{proof}
	We already know that $R_1$, $R_2$ and $R_3$ are the only red components in $H$.
	Suppose there is a (possibly trivial)  green \connected component $\green{G_4}$ distinct from $\green{G_1},$ $\green{G_2}$ and $\green{G_3}.$   Assume first that $\bibot{\green{G_4}}\neq\emptyset$. 
	Note that any edge between $\bibot{\green{G_4}}$ and $\bitop{G_1' \cup G_2'}$ is red or blue. 
	By \eqref{cla:B_1-not-B_2}, no vertex of $\bibot{\green{G_4}}$ can send blue edges to both $\bitop{G_1'}$ and $\bitop{G_2'}$. 
	Moreover, by \eqref{equ:4} and~\eqref{equ:5}, no vertex of $\bibot{\green{G_4}}$ can send red edges to both $\bitop{G_1'}$ and $\bitop{G_2'}$. 
	Since $H$ is complete and $\overline{G_1'}=\overline{B_1'}$ and $\overline{G_2'}=\overline{B_2'}$ are non-trivial, we derive $\bibot{G_4} \subseteq \bibot{R_1 \cup R_2} \cap \bibot{B_1 \cup B_2}$. 
	But this contradicts~\eqref{equ:1}, because $H$ is complete.
				
	Now let us assume that  $\bibot{\green{G_4}}=\emptyset$, and so, $\bitop{\green{G_4}}\neq\emptyset$. 
	In other words, $G_4$ consists of a single vertex with no incident green edges. 
	Suppose that $\bitop{G_4}\cap \bitop{ M_3} = \emptyset$. 
	So by~\eqref{equ:big-guys} and \eqref{equ:2},  the edges between $\bitop{\green{G_4}}$ and $\bibot{G_1' \cup G_2'}$ are blue, which contradicts that $B'_1$ and $B'_2$ lie in distinct blue components, as asserted by~\eqref{cla:B_1-not-B_2}.
	Therefore $\bitop{G_4} \subset \bitop{ M_3}$. 
	As $\bibot{G_4} = \emptyset$, all edges between $\bitop{G_4}$ and $\bibot{M_1 \cup M_2}$ are blue. 
	By~\eqref{equ:7} and~\eqref{equ:B3cupG3capM1cupM2empty},  $B_3 \subset [\bibot{M_1 \cup M_2},\bitop{M_3}]$.    
	Since $H$ is  complete and $B_3$ is non-trivial, we obtain that $\bitop{G_4} \subset \bitop{B_3}$. 
	We also have that $G_3 \subset [\bibot{M_1 \cup M_2},\bitop{M_3}]$ by~\eqref{equ:7} and~\eqref{equ:B3cupG3capM1cupM2empty}. 
	Since $G_3$ is non-trivial it follows that, $\bibot{G_3} \cap \bibot{M_1 \cup M_2}$ is non-empty. 
	Since the edges between $\bitop{G_4}$ and $\bibot{G_3}$ are blue, we obtain that $\bibot{M_1 \cup M_2} \cap \bibot{G_3 \cap B_3} \neq \emptyset$.
	But this represents a contradiction to~\eqref{equ:4} or~\eqref{equ:5}, since there is no colour left for the edges between $\bibot{G_3 \cap B_3}$ and $\bitop{B_1' \cup B_2'}$.
	Since a fourth blue component would behave the same way as $G_4$, this finishes the proof of the claim.
\end{proof}
By~\eqref{equ:2} it follows that  $\bibot{\red{R_i}} = \bibot{\red{M_i}}$ for $i = 1,2$. 
In the same way~\eqref{equ:4} and \eqref{equ:5} imply that 
\begin{equation}\label{equ:R3=M3}
	\bitop{\red{R_3}} = \bitop{\red{M_3}}.
\end{equation} 
For $1 \leq i,j,k \leq 3$ we denote $\cpt{i}{j}{k} := \bitop{\red{R_i} \cap \green{G_j} \cap \blue{B_k}}$ and $\cp{i}{j}{k} := \bibot{\red{R_i} \cap \green{G_j} \cap \blue{B_k}}.$ 
From~(\ref{equ:big-guys}),~(\ref{equ:1}),~(\ref{equ:4}) and~(\ref{equ:5}) we obtain that
\begin{equation}
	\label{equ:cool}
	|\cpt{1}{1}{1}|, |\cpt{2}{2}{2}|, |\cpt{3}{3}{3}| > 0.
\end{equation}
Note that by definition and completeness it follows that for all $i,i',j,j',k,k'$ with $i \neq i'$, $j \neq j'$ and $k \neq k'$ we have (modulo switching biparts)
	\begin{equation}\label{equ:pw-dif-follows-empty}
	\text{ if $|\cpt{i}{j}{k}| > 0$, then $|\cp{i'}{j'}{k'}| = 0$.}
	\end{equation}

Let us show that $\cp{i}{j}{k} =\emptyset,$ unless $i,j,k$ are pairwise different. 
	Indeed, otherwise, if say  $\cp{1}{1}{k} \neq \emptyset$  for $k = 1,2$ or $3$, we obtain a contradiction to~\eqref{equ:pw-dif-follows-empty} as $|\cpt{2}{2}{2}|, |\cpt{3}{3}{3}| > 0$ by~\eqref{equ:cool}.

Hence $\bibot{H}$ can be decomposed into sets $\cp{i}{j}{k},$ where $1 \leq i,j,k \leq 3$ are pairwise different. 
So we have:
\begin{equation}
	\label{equ:bot-partition}
	\cp{1}{3}{2} \cup \cp{1}{2}{3}  \cup \cp{2}{3}{1}  \cup \cp{2}{1}{3} \cup \cp{3}{2}{1} \cup \cp{3}{1}{2} = \bibot{H}.
\end{equation}
\begin{claim}
	\label{cla:top-partition-prelim}
	We have $\bitop{H}=\cpt{1}{1}{1} \cup \cpt{2}{2}{2} \cup \cpt{3}{3}{3}\cup\cpt{3}{1}{2} \cup \cpt{3}{2}{1} $.
\end{claim}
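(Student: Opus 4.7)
The plan is to decompose $\bitop{H}$ into the pairwise disjoint pieces $\bitop{B'_1}$, $\bitop{B'_2}$, $\bitop{M_1}$, $\bitop{M_2}$, $\bitop{M_3}$ provided by~\eqref{equ:M1M2M3-spanning}, and to identify for each piece all triples $(i,j,k)$ for which it can meet $\cpt{i}{j}{k}$. The main tool is~\eqref{equ:pw-dif-follows-empty}, which I read as the rule that whenever $\cpt{i}{j}{k}$ and $\cp{i'}{j'}{k'}$ are both non-empty, at least one of $i=i'$, $j=j'$, $k=k'$ must hold, since any edge between two such witnesses would admit no valid colour. The bottom-side non-empty inputs I shall feed into this rule are: $\cp{3}{2}{1} \supset \bibot{B'_1}$ and $\cp{3}{1}{2} \supset \bibot{B'_2}$, both non-empty by~\eqref{equ:big-guys}; $\bibot{M_1} \subset \cp{1}{2}{3} \cup \cp{1}{3}{2}$ and $\bibot{M_2} \subset \cp{2}{1}{3} \cup \cp{2}{3}{1}$, also non-empty by~\eqref{equ:big-guys}; and $\bibot{G_3} = \cp{1}{3}{2} \cup \cp{2}{3}{1}$, $\bibot{B_3} = \cp{1}{2}{3} \cup \cp{2}{1}{3}$, where the equalities follow from~\eqref{equ:bot-partition} together with the fact (already noted) that $\cp{i}{j}{k}=\emptyset$ unless $i,j,k$ are pairwise distinct, and the non-emptiness from the non-triviality of $G_3, B_3$ guaranteed by Claim~\ref{cla:at-least-3-components} and Claim~\ref{cla:all-have-exactly-3-components}.

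The easy cases come for free: $\bitop{B'_1} \subset \cpt{1}{1}{1}$ follows from~\eqref{equ:4} combined with $\bitop{B'_1} = \bitop{G'_1}$ and $B'_1 \subset B_1$, and symmetrically $\bitop{B'_2} \subset \cpt{2}{2}{2}$ from~\eqref{equ:5}. For $v \in \bitop{M_1}$, one has $v \in \bitop{R_1}$ and, by~\eqref{equ:B3cupG3capM1cupM2empty}, $v \notin \bitop{G_3 \cup B_3}$, so $v \in \cpt{1}{j}{k}$ with $j,k \in \{1,2\}$. The forbidden options $(j,k)=(1,2)$ and $(j,k)=(2,1)$ conflict with $\cp{3}{2}{1} \neq \emptyset$ and $\cp{3}{1}{2} \neq \emptyset$ respectively, via~\eqref{equ:pw-dif-follows-empty}; the option $(j,k)=(2,2)$ forces both $\cp{2}{1}{3}$ and $\cp{2}{3}{1}$ to be empty, hence $\bibot{M_2}=\emptyset$, contradicting~\eqref{equ:big-guys}. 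The treatment of $\bitop{M_2}$ is entirely analogous, giving $\bitop{M_2} \subset \cpt{2}{2}{2}$.

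The main obstacle is the case $v \in \bitop{M_3}$. By~\eqref{equ:R3=M3} we have $v \in \cpt{3}{j}{k}$ for some $j,k$, and six pairs $(j,k)$ must be excluded. The pairs $(1,1)$ and $(2,2)$ are handled exactly as in the $\bitop{M_1}$ case, by forcing $\bibot{M_1}$ respectively $\bibot{M_2}$ to be empty. The genuinely delicate cells are $\cpt{3}{1}{3}, \cpt{3}{3}{1}, \cpt{3}{2}{3}, \cpt{3}{3}{2}$: here neither $\cp{3}{2}{1}$ nor $\cp{3}{1}{2}$ can supply a contradiction via~\eqref{equ:pw-dif-follows-empty}, since they share the coordinate $i=3$ with the bad cell. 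The resolution is to observe that each such cell forces exactly two bottom cells to be empty, and these two cells together exhaust the bottom of $G_3$ or of $B_3$: for $j=3$ the empty cells are $\cp{1}{3}{2}$ and $\cp{2}{3}{1}$, exhausting $\bibot{G_3}$, while for $k=3$ they are $\cp{1}{2}{3}$ and $\cp{2}{1}{3}$, exhausting $\bibot{B_3}$. In either case the corresponding third monochromatic component becomes trivial, contradicting Claim~\ref{cla:at-least-3-components}. The reverse inclusion is trivial, and this completes the proof.
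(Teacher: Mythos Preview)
Your argument is sound in structure, but there is a small slip in the last paragraph: the assignment of $G_3$ and $B_3$ to the two subcases is swapped. If the top cell has $j=3$, say $\cpt{3}{3}{1}\neq\emptyset$, then \eqref{equ:pw-dif-follows-empty} forces empty those bottom cells whose green coordinate is \emph{different} from $3$; among the six cells in \eqref{equ:bot-partition} these are $\cp{1}{2}{3}$ and $\cp{2}{1}{3}$, not $\cp{1}{3}{2}$ and $\cp{2}{3}{1}$ (the latter share the green coordinate $3$ with the top cell and so are not excluded). Hence it is $\bibot{B_3}$ that becomes empty in the case $j=3$, and symmetrically $\bibot{G_3}$ in the case $k=3$. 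With this correction the argument goes through.

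Your route differs from the paper's. You decompose $\bitop{H}$ via the explicit cover $\bitop{B'_1}\cup\bitop{B'_2}\cup\bitop{M_1}\cup\bitop{M_2}\cup\bitop{M_3}$ from \eqref{equ:M1M2M3-spanning} and, for each piece, first restrict the possible $(i,j,k)$ using earlier structural facts (\eqref{equ:4}, \eqref{equ:5}, \eqref{equ:B3cupG3capM1cupM2empty}, \eqref{equ:R3=M3}) before invoking \eqref{equ:pw-dif-follows-empty}. The paper instead works directly on the $27$ top cells and eliminates them in two strokes: first all cells with exactly two equal indices (one representative case such as $\cpt{3}{1}{1}$ forces $\bibot{R_1}=\cp{1}{2}{3}\cup\cp{1}{3}{2}=\emptyset$, contradicting non-triviality of $R_1$), and then all cells with $i\neq 3$ and $3\in\{j,k\}$, using only that $\cp{3}{1}{2}$ and $\cp{3}{2}{1}$ are non-empty (which follows from \eqref{equ:2} and \eqref{equ:big-guys}). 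The paper's version is shorter and needs fewer of the previously established facts; yours is more concrete and actually locates each piece of $\bitop{H}$ inside a specific cell, which is a little more informative but also more work.
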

\begin{proof}
	First, we show there is no $\cpt{i}{j}{k} \neq \emptyset$ such that exactly two of $i,j,k$ are equal.
	If $\cpt{3}{1}{1} \neq \emptyset$, say, then $|\cp{1}{2}{3}| , |\cp{1}{3}{2}| =  0 $ by~\eqref{equ:pw-dif-follows-empty}.
	Together with~\eqref{equ:bot-partition}, this implies that $R_1$ is trivial, a contradiction.
	Second, note that (\ref{equ:2}) implies that $\cpt{3}{1}{2}$ and $\cpt{3}{2}{1}$ are non-empty. 
	Again, by~\eqref{equ:pw-dif-follows-empty}, it follows that $\cpt{i}{j}{k} = \emptyset,$ if $i \neq 3$ and $3 \in \{j,k\}$.
	This proves the claim.
\end{proof}

\begin{claim}
	\label{cla:top-partition}
	We have $\bitop{H}= \cpt{1}{1}{1} \cup \cpt{2}{2}{2} \cup \cpt{3}{3}{3}$.
\end{claim}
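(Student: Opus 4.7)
The plan is to rule out the two extra classes $\cpt{3}{1}{2}$ and $\cpt{3}{2}{1}$ left over from Claim~\ref{cla:top-partition-prelim}. A first observation is that they cannot both be non-empty: applying \eqref{equ:pw-dif-follows-empty} to a vertex in each would force $\cp{1}{2}{3} = \cp{2}{3}{1} = \cp{1}{3}{2} = \cp{2}{1}{3} = \emptyset$, which by \eqref{equ:bot-partition} empties both $\bibot{R_1}$ and $\bibot{R_2}$, contradicting the non-triviality of $R_1$ and $R_2$ asserted in \eqref{equ:big-guys}. So, exchanging the names of green and blue if necessary (which swaps $\cpt{3}{1}{2}$ and $\cpt{3}{2}{1}$), I may assume $\cpt{3}{2}{1} = \emptyset$ and aim for a contradiction from $\cpt{3}{1}{2} \neq \emptyset$.

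Under this assumption, \eqref{equ:pw-dif-follows-empty} applied to $\cpt{3}{1}{2}$ yields $\cp{1}{2}{3} = \cp{2}{3}{1} = \emptyset$, so by \eqref{equ:bot-partition} we have $\bibot{R_1} = \cp{1}{3}{2}$ and $\bibot{R_2} = \cp{2}{1}{3}$. Since the only colour coordinate shared by $\cpt{i}{i}{i}$ and $\bibot{R_i}$ is $r = i$, every edge between these sets is red, so each of $R_1, R_2$ is a complete bipartite graph in red, and \eqref{equ:big-guys} forces $|\cpt{i}{i}{i}| > |\bibot{R_i}|$ for $i = 1, 2$. From here I construct an explicit partition of $V(H)$ into at most five monochromatic connected matchings, contradicting the standing assumption at the start of the proof that no such partition exists.

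The construction economises on red matchings by taking: a green $G_3$-matching saturating $\cp{1}{3}{2}$ into $\cpt{3}{3}{3}$ (which is non-empty by \eqref{equ:cool}), a red $R_2$-matching saturating $\cp{2}{1}{3}$, a green $G_1$-matching absorbing the bulk of $\cpt{1}{1}{1} \cup \cpt{3}{1}{2}$ into $\cp{3}{1}{2}$, and two further matchings chosen from $G_2, B_1, B_2, R_3$ to absorb the leftover of $\cpt{2}{2}{2}$ and of $\bibot{R_3}$. The main obstacle is the bookkeeping of a short case analysis on the relative sizes of $|\cpt{3}{3}{3}|, |\cp{1}{3}{2}|, |\cp{2}{1}{3}|, |\cpt{i}{i}{i}| - |\bibot{R_i}|, |\cp{3}{1}{2}|, |\cp{3}{2}{1}|$; in each case the balance $|\bitop{H}| = |\bibot{H}|$, together with the edges of $R_3$ forced red from $\cpt{3}{3}{3} \cup \cpt{3}{1}{2}$ into both classes of $\bibot{R_3}$ (by the pairwise-different-coordinate rule), supplies enough flexibility for Hall's condition to be met and the five-matching partition to be realised.
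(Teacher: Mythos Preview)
Your opening move — ruling out that both $\cpt{3}{1}{2}$ and $\cpt{3}{2}{1}$ are non-empty via \eqref{equ:pw-dif-follows-empty} and \eqref{equ:big-guys} — is correct and is essentially what the paper does. Your inequality $|\cpt{i}{i}{i}| > |\bibot{R_i}|$ for $i=1,2$ is also right, though it rests not only on \eqref{equ:big-guys} but on \eqref{equ:2} (which forces $\bibot{R_i} = \bibot{M_i}$) together with \eqref{equ:4}, \eqref{equ:5} (which place the non-empty $\bitop{B'_i}$ inside $\bitop{R_i}\setminus\bitop{M_i}$).

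The gap is in the construction of the five matchings. Your very first step, a green $G_3$-matching ``saturating $\cp{1}{3}{2}$ into $\cpt{3}{3}{3}$'', already needs $|\cp{1}{3}{2}| \le |\cpt{3}{3}{3}|$, and nothing established so far gives this; knowing $\cpt{3}{3}{3}\neq\emptyset$ from \eqref{equ:cool} is not enough. You then defer the rest to ``a short case analysis'' without carrying it out, and it is not at all clear that the pieces you list — one $R_2$-matching, one $G_1$-matching, and two more drawn from $G_2,B_1,B_2,R_3$ — can always be assembled into a full cover. In particular you have abandoned $M_3$, thereby losing the one structural fact \eqref{equ:R3=M3} that disposes of all of $\bitop{R_3}=\cpt{3}{3}{3}\cup\cpt{3}{1}{2}$ in a single stroke. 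The paper instead \emph{keeps} $M_3$ (so $\bitop{R_3}$ is already saturated), then greedily adds a maximum green matching in $G_1-M_3$ and a maximum blue matching in $B_2-M_3-M_1^{\mbox{\scriptsize green}}$; since $\cpt{3}{1}{2}\cup\cp{3}{1}{2}\subset R_3\cap G_1\cap B_2$, after these three matchings both $\cpt{3}{1}{2}$ and $\cp{3}{1}{2}$ are automatically empty, and only $\cpt{1}{1}{1}$, $\cpt{2}{2}{2}$ survive on top, making the residual case analysis genuinely short. Your scheme may be salvageable, but as written it is a plan rather than a proof.
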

\begin{proof}
	By the previous claim it remains to show that $\cpt{3}{1}{2} = \cpt{3}{2}{1} = \emptyset.$ 
	To this end, suppose that $\cpt{3}{1}{2} \neq \emptyset$ and thus $\cps{1}{2}{3} , \cps{2}{3}{1} = 0$  by~\eqref{equ:pw-dif-follows-empty}.
	If $\cpt{3}{2}{1} \neq \emptyset$ as well, then by~\eqref{equ:pw-dif-follows-empty} also $\cps{1}{3}{2} = 0$ which, by Claim~\ref{cla:top-partition-prelim} and (\ref{equ:bot-partition}) gives the contradiction that	$\rone \subset [\cpt{1}{1}{1}, \cp{1}{2}{3} \cup \cp{1}{3}{2}]$ is trivial.
	So we have  $$\bitop{H} = \cpt{1}{1}{1} \cup \cpt{2}{2}{2} \cup \cpt{3}{3}{3} \cup \cpt{3}{1}{2},$$ with $\cpt{3}{1}{2} \neq \emptyset$.
	This partition is shown in Figure~\ref{fig:ugly-colouring}.
	\begin{figure}
		\centering\scalebox{1}{
		\begin{tikzpicture}[thick,
				every node/.style={}, tedge/.style={opacity=0.4,line width=3},
				gfit/.style={rectangle,draw,inner sep=0pt,text width=0.0cm},
				rfit/.style={rectangle,draw,inner sep=0pt,text width=0.0cm},
				rot/.style={myred,loosely dashed},
				blau/.style={myblue},
				gruen/.style={mygreen,densely dotted}
			]

		\begin{customlegend}[legend cell align=right, 
		legend entries={ 
			red, green, blue
		},
		legend style={at={(10,2)},font=\footnotesize}] 
		\addlegendimage{draw,  opacity=0.4,line width=3, loosely dashed}
		\addlegendimage{draw, opacity=0.4,line width=3, densely dotted}
		\addlegendimage{draw,  opacity=0.4,line width=3}
		\end{customlegend}
									
			\foreach \i in {1,2,...,12}  \node (t\i) at (0.6*\i,3) {};
			\foreach \i in {1,2,...,18}  \node (b\i) at (0.6*\i,0) {};

			\node [gfit,fit=(t1) (t3),label=above:{$\cpt{1}{1}{1}$}] {};
			\node [gfit,fit=(t4) (t6),label=above:{$\cpt{2}{2}{2}$}] {};
			\node [gfit,fit=(t7) (t9),label=above:{$\cpt{3}{3}{3}$}] {};
			\node [gfit,fit=(t10) (t12),label=above:{$\cpt{3}{1}{2}$}] {};
									
			\node [gfit,fit=(b1) (b3),label=below:{$\cp{1}{3}{2}$}] {};
			\node [gfit,fit=(b7) (b9),label=below:{$ \cp{3}{2}{1} $}] {};
			\node [gfit,fit=(b4) (b6),label=below:{$\cp{2}{1}{3}$}] {};
			\node [gfit,fit=(b10) (b12),label=below:{$\cp{3}{1}{2}$}] {};
									
			\foreach \b in {2}
			\path[draw, rot, tedge] (t2) -- (b\b);
			\foreach \b in {5}
			\path[draw, rot, tedge] (t5) -- (b\b);
			\foreach \b in {8,10}
			\foreach \t in {8,10}
			\path[draw, rot, tedge] (t\b) -- (b\t);    
			\foreach \b in {2,11}
			\foreach \t in {5,11}
			\path[draw, gruen, tedge] (t\b) -- (b\t);
			\foreach \b in {8}
			\path[draw, gruen, tedge] (t5) -- (b\b);
			\foreach \b in {2}
			\path[draw, gruen, tedge] (t8) -- (b\b);

			\foreach \t in {5,12}
			\foreach \b in {2,12}
			\path[draw, blau, tedge] (t\t) -- (b\b);
			\foreach \b in {2}
			\path[draw, blau, tedge] (t\b) -- (b8);
			\foreach \b in {8}
			\path[draw, blau, tedge] (t8) -- (b5);    
		\end{tikzpicture}}
		\caption{The colouring from the proof of Claim~\ref{cla:top-partition}}
		\label{fig:ugly-colouring}
	\end{figure}
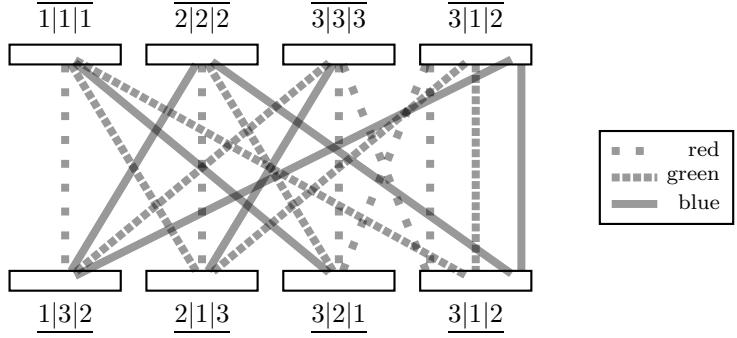

	Ignoring from now on the matchings $M_1$ and $M_2$, we aim at covering $H$ with $M_3$ and four other matchings. 
	To this end take a green matching $\green{M_1^{\mbox{\scriptsize green}}}$ of maximum size in $\green{G_1}-\red{M_3}$ and next a blue matching $\blue{M_2^{\mbox{\scriptsize blue}}}$ of maximum size in $\blue{B_2}-\red{M_3} - \green{M_1^{\mbox{\scriptsize green}}}.$ 
	Denote
	\begin{itemize}
		\item  $\cpt{i}{j}{k}' := \cpt{i}{j}{k} \setminus \bitop{\red{M_3} \cup \green{M_1^{\mbox{\scriptsize green}}} \cup \blue{M_2^{\mbox{\scriptsize blue}}} }$ and
		\item $\cp{i}{j}{k}' := \cp{i}{j}{k} \setminus \bibot{\red{M_3} \cup \green{M_1^{\mbox{\scriptsize green}}} \cup \blue{M_2^{\mbox{\scriptsize blue}}} }.$
	\end{itemize}
	We can assume that $M_3 \cup \green{M_1^{\mbox{\scriptsize green}}} \cup \blue{M_2^{\mbox{\scriptsize blue}}}$ is not spanning.
	Thus, as $H$ is complete, the maximality of the matchings $\red{M_3},$  $\green{M_1^{\mbox{\scriptsize green}}}$ and $\blue{M_2^{\mbox{\scriptsize blue}}}$ implies that $\cpt{3}{1}{2}', \cp{3}{1}{2}' = \emptyset$. 

	Moreover it follows that
	\begin{itemize}
		\item $|\cpt{1}{1}{1}'| = 0$ or $|\cp{2}{1}{3}'| = 0$ by maximality of $\green{M_1^{\mbox{\scriptsize green}}} \subset \green{G_1},$
		\item $|\cpt{2}{2}{2}' | =0 $ or $|\cp{1}{3}{2}'| = 0$ by maximality of $\blue{M_2^{\mbox{\scriptsize blue}}} \subset \blue{B_2},$
		\item $\cpt{3}{3}{3}' =   \emptyset$ as $\bitop{\red{R_3}} = \bitop{\red{M_3}}$ by~\eqref{equ:R3=M3}.
	\end{itemize}
	If $|\cpt{1}{1}{1}'| , | \cpt{2}{2}{2}'| = 0$, then we have found three disjoint connected matchings that span $H$, contradicting our assumption. 
	If $|\cp{2}{1}{3}'| ,| \cp{1}{3}{2}'|= 0$, we take a green matching in $G_2$ and a blue maximum matching in $B_1$, among the yet unmatched vertices. 
	After this step, there are no vertices of $\cp{3}{2}{1}'$ left uncovered and therefore all vertices of $\bibot{H}$ are covered. 
	Thus, as $H$ is balanced, we have found five disjoint monochromatic connected matchings which together span $H$. 
	So, either $|\cpt{2}{2}{2}'| ,|\cp{2}{1}{3}'| = 0$, or $|\cpt{1}{1}{1}'|,|\cp{1}{3}{2}'| = 0$.
	In either case we can find two disjoint monochromatic connected matchings that cover all  vertices of the two other 	sets from the previous sentence and all vertices  of  $\cp{3}{2}{1}'$. 
	So we have five disjoint monochromatic connected matchings spanning $H$, a contradiction. 
\end{proof}

For ease of notation we set
$$X := |\cpt{1}{1}{1}|,~Y:= |\cpt{2}{2}{2}|,~Z:= |\cpt{3}{3}{3}| \ \text{  and}$$
$$A:= |\cp{1}{3}{2}|,~B:= |\cp{1}{2}{3}|,~C:=  |\cp{2}{3}{1}|,~D:=  |\cp{2}{1}{3}|,~E:= |\cp{3}{2}{1}|,~F:= |\cp{3}{1}{2}|.$$  
By Claim~\ref{cla:top-partition} and (\ref{equ:bot-partition}) we have $|\bitop{H}| = X +Y +Z $ and $|\bibot{H}| = A + B + C + D + E+ F$.
Note that the edges between any upper and lower part are monochromatic (see Figure~\ref{fig:nice-colouring}). 

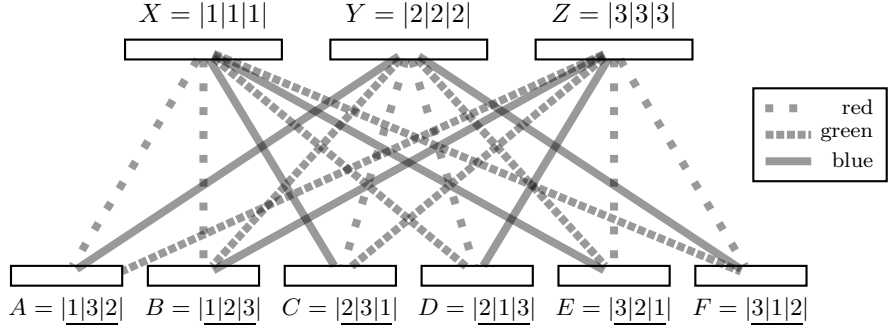
\begin{figure}
	\centering
	\begin{tikzpicture}[thick,
			every node/.style={}, tedge/.style={opacity=0.4,line width=3},
			gfit/.style={rectangle,draw,inner sep=0pt,text width=0.0cm},
			rfit/.style={rectangle,draw,inner sep=0pt,text width=0.0cm},
			rot/.style={myred,loosely dashed},
			blau/.style={myblue},
			gruen/.style={mygreen,densely dotted}
		]
						
		\begin{customlegend}[legend cell align=right, 
		legend entries={ 
			red, green, blue
		},
		legend style={at={(12,2.5)},font=\footnotesize}] 
		\addlegendimage{draw,  opacity=0.4,line width=3, loosely dashed}
		\addlegendimage{draw, opacity=0.4,line width=3, densely dotted}
		\addlegendimage{draw,  opacity=0.4,line width=3}
		\end{customlegend}
						
		\foreach \i in {1,2,...,9}  \node (t\i) at (1.2+0.9*\i,3) {};
		\foreach \i in {1,2,...,18}  \node (b\i) at (0.6*\i,0) {};

		\node [gfit,fit=(t1) (t3),label=above:{$X=|\cpt{1}{1}{1}|$}] {};
		\node [gfit,fit=(t4) (t6),label=above:{$Y=|\cpt{2}{2}{2}|$}] {};
		\node [gfit,fit=(t7) (t9),label=above:{$Z=|\cpt{3}{3}{3}|$}] {};
						
		\node [gfit,fit=(b1) (b3),label=below:{\small{$A=|\cp{1}{3}{2}|$}}] {};
		\node [gfit,fit=(b4) (b6),label=below:{\small{$B=|\cp{1}{2}{3}|$}}] {};
		\node [gfit,fit=(b7) (b9),label=below:{\small{$C=| \cp{2}{3}{1}| $}}] {};
		\node [gfit,fit=(b10) (b12),label=below:{\small{$D=|\cp{2}{1}{3}|$}}] {};
		\node [gfit,fit=(b13) (b15),label=below:{\small{$E=|\cp{3}{2}{1}|$}}] {};
		\node [gfit,fit=(b16) (b18),label=below:{\small{$F=|\cp{3}{1}{2}|$}}] {};

		\foreach \b in {2,5}
		\path[draw, rot, tedge] (t2) -- (b\b);
		\foreach \b in {8,11}
		\path[draw, rot, tedge] (t5) -- (b\b);
		\foreach \b in {14,17}
		\path[draw, rot, tedge] (t8) -- (b\b);    
		\foreach \b in {11,17}
		\path[draw, gruen, tedge] (t2) -- (b\b);
		\foreach \b in {5,14}
		\path[draw, gruen, tedge] (t5) -- (b\b);
		\foreach \b in {3,8}
		\path[draw, gruen, tedge] (t8) -- (b\b);     
		\foreach \b in {8,14}
		\path[draw, blau, tedge] (t2) -- (b\b);
		\foreach \b in {2,17}
		\path[draw, blau, tedge] (t5) -- (b\b);
		\foreach \b in {11,5}
		\path[draw, blau, tedge] (t8) -- (b\b);

	\end{tikzpicture}
	\caption{The partition of $\kn.$}
	\label{fig:nice-colouring}
\end{figure}
Also note that  we reached complete symmetry between the colours and the indices of the components, so we will from now on again treat them as interchangeable.

Observe that for (at least) one index $i\in\{1,2,3\}$ it holds that $|\bitop{R_i}|\leq |\bibot{R_i}|$.
We shall call such an index $i$ a {\em weak} index for the  colour red. 
If furthermore $|\bitop{R_i}|< |\bibot{R_i \cap B_j}|=|\bibot{R_i \cap  G_k}|$ and $|\bitop{R_i}|< |\bibot{R_i \cap  B_k}|=|\bibot{R_i \cap  G_j}|$, where $j,k$ are the other two indices from $\{1,2,3\}$, then we call $i$ {\em very weak} for colour red.
Analogously define {\em (very) weak} indices for colours blue and red. 
\begin{claim}
	\label{cla:a+b-into-x-gen} If index $i$ is weak for colour $c$, then
	\begin{enumerate}[(a)]
		\item\label{cla:a+b-into-x-gen-a} the indices in $\{1,2,3\}-\{i\}$ are not weak for colour $c$, and
		\item\label{cla:a+b-into-x-gen-b} index $i$ is very weak for colour $c$.
	\end{enumerate}
\end{claim}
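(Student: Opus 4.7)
The plan is to prove both parts by contradiction: assuming the claim fails, I construct five monochromatic connected matchings that span $H$, contradicting the standing assumption of Lemma~\ref{lem:key-lemma}. The key invariant is the global balance $X+Y+Z = A+B+C+D+E+F = n$, which turns almost every needed size comparison into a direct consequence of the hypothesized inequalities. By the colour--index symmetry just established, I can assume in each part that the distinguished colour is red.

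For part (a), suppose that $i=1$ and $i=2$ are both weak for red, so $X\le A+B$ and $Y\le C+D$; then the global balance yields $Z\ge E+F$. I take red matchings $M_1\subseteq R_1$ of size $X$ covering $\cpt{1}{1}{1}$, $M_2\subseteq R_2$ of size $Y$ covering $\cpt{2}{2}{2}$, and $M_3\subseteq R_3$ of size $E+F$ covering $\cp{3}{2}{1}\cup\cp{3}{1}{2}$. This leaves $Z-E-F$ vertices of $\cpt{3}{3}{3}$ uncovered on top and, by the global balance, the same total uncovered on the bottom, distributed between $\bibot{R_1}$ and $\bibot{R_2}$. Since $[\cpt{3}{3}{3},\,\cp{1}{3}{2}\cup\cp{2}{3}{1}]$ is a complete bipartite subgraph of $G_3$ and $[\cpt{3}{3}{3},\,\cp{1}{2}{3}\cup\cp{2}{1}{3}]$ is a complete bipartite subgraph of $B_3$, I finish with a green connected matching $M_4\subseteq G_3$ covering the green-reachable bottom leftovers and a blue connected matching $M_5\subseteq B_3$ covering the remainder, yielding the desired contradiction.

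For part (b), suppose $i=1$ is weak for red but not very weak. By (a) the other two indices are not weak, so $Y>C+D$ and $Z>E+F$. Negating the very-weak condition and exploiting the green--blue symmetry (which swaps $A\leftrightarrow B$ and $G_i\leftrightarrow B_i$), I may assume $X\ge A$. I now choose $M_1\subseteq R_1$ of size $X$ to saturate $\cp{1}{3}{2}$ completely (possible since $A\le X\le A+B$), together with red matchings $M_2\subseteq R_2$ and $M_3\subseteq R_3$ of sizes $C+D$ and $E+F$ saturating $\bibot{R_2}$ and $\bibot{R_3}$. The only remaining bottom vertices are $A+B-X$ in $\cp{1}{2}{3}$, while on top $Y-C-D$ vertices of $\cpt{2}{2}{2}$ and $Z-E-F$ vertices of $\cpt{3}{3}{3}$ remain; the totals match by the global balance. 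Since $[\cpt{2}{2}{2},\cp{1}{2}{3}]\subseteq G_2$ and $[\cpt{3}{3}{3},\cp{1}{2}{3}]\subseteq B_3$ are each complete bipartite, a green matching $M_4\subseteq G_2$ and a blue matching $M_5\subseteq B_3$ complete the cover. The main obstacle throughout is the bookkeeping: checking that leftover sizes on top and bottom agree and that each required matching lies inside a genuine monochromatic connected component; every such verification reduces, via the global balance, to one of the hypothesized inequalities $X\le A+B$, $Y>C+D$, $Z>E+F$, and (in (b)) $X\ge A$.
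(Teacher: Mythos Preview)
Your proof is correct and follows essentially the same approach as the paper's: both parts are proved by contradiction, constructing five monochromatic connected matchings via three red matchings that saturate the appropriate smaller sides, followed by one green and one blue matching absorbing the balanced leftover. The only cosmetic difference is that the paper chooses $i=2$ as the representative weak index while you choose $i=1$, which is immaterial given the symmetry.
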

\begin{proof}
	Let us show this for $i=2$ and colour red (the other cases are analogous).
	By assumption, $Y \leq C+D$.
	Since $X < A+B$ and $Z < E+F$ cannot both hold, we can assume without loss of generality that $ Z \geq E+F$.
	Now if $X \leq A+B$, then we pick maximal red matchings in $[\cpt{1}{1}{1},\cp{1}{3}{2} \cup \cp{1}{2}{3}]$, $[\cpt{2}{2}{2},\cp{2}{3}{1} \cup  \cp{2}{1}{3}]$ and $[\cp{3}{2}{1} \cup \cp{3}{1}{2},\cpt{3}{3}{3}]$, thus covering all vertices of $\cpt{1}{1}{1}\cup \cpt{2}{2}{2} \cup \cp{3}{2}{1} \cup \cp{3}{1}{2}$. 
	To finish we cover all of the remaining vertices in $\cpt{3}{3}{3} \cup (\bibot{H \setminus R_3})$ with a blue and a green matching, a contradiction.  
	Hence $  X > A+B$. 
	Using this fact, $ Z > E+F$ follows by symmetry.
	This proves (a).
			 
	In order to show (b), let us first prove that $ Y < C$. 
	We pick a maximal red matching in each of $R_1$ and $R_3$, thus covering all  vertices of $\bibot{R_1 \cup R_3}$.
	Now if $Y \geq C,$ then all vertices of $\cp{2}{3}{1}$ are contained in a maximal red matching that also contains all vertices of $\cpt{2}{2}{2}$.
	We  cover all of the remaining vertices in $\bitop{R_1 \cup R_3}$ with a blue and a green matching, a contradiction. 
	The fact that $Y<D$ follows analogously.
\end{proof}

Suppose two of the three indices $1,2,3$ are weak for  different colours, say 1 is weak for red and  2 is weak for green. 
Then Claim~\ref{cla:a+b-into-x-gen}\eqref{cla:a+b-into-x-gen-b} gives that $X<A$ and $Y<E$. 
Thus we can match all vertices of $\cpt{1}{1}{1}$  into $\cp{1}{3}{2}$ and all vertices of $\cpt{2}{2}{2}$  into $\cp{3}{2}{1}$ with two matchings, one red and one green, and cover all of the remaining vertices with three disjoint matchings, one from each of $R_3$, $G_3$, $B_3$, a contradiction.

Hence, since each colour has a weak index, there is an index $i$ that is weak for all three colours, $i=2$ say. 
We match  all  vertices of $\cpt{2}{2}{2}$   into $\cp{3}{1}{2}$ with a blue matching $M$. 
Let  us from now work with the remaining set $\cp{3}{1}{2}' = \cp{3}{1}{2} \setminus V(M)$ of cardinality $F'=F-Y$. 
Set $n'=n-Y$.
(So instead of five we will have to find four monochromatic connected matchings covering all vertices of $H-M$.)
Without loss of generality assume $Z\geq X$. 
Claim~\ref{cla:a+b-into-x-gen}\eqref{cla:a+b-into-x-gen-a} gives that
\begin{equation}\label{xxx}
	\text{$X > A+B,C+E,D+F' $ and  $Z > A+C,B+D,E+F'.$}
\end{equation}
Hence $X>n'/3$. 
So, one of the three sums $A+C,B+D,E+F'$ has to be strictly smaller than $X$, say $A+C<X$. 
Consequently, $Z = n'-X<B+D+E+F'$.
 
If $Z\geq D+E+F'$, then we cover all  vertices of $\bibot{R_3 -M}$ with a red matching, and cover all  vertices of the remains of $\cpt{3}{3}{3}$ with a  blue matching that also covers all vertices of $\cp{2}{1}{3}$. 
Now all that is left on the top is $\cpt{1}{1}{1}$, which we can match  with a red and a blue matching into the remains of $\cp{1}{3}{2} \cup \cp{1}{2}{3}\cup \cp{2}{3}{1}$.
Thus we found four connected matchings that cover all vertices of $H-V(M)$, and are done.

So we may assume $Z< D+E+F'$ and thus $X>A+B+C$.
If $X\leq A+B+C+E$, then we can proceed similarly as in the previous paragraph to find four matchings covering all vertices of $H$.
Hence $X> A+B+C+E$, implying that $Z<D+F'$. 
But by~\eqref{xxx} we have $D+F'<X$ a contradiction to our assumption that $X\leq Z$. 
This finishes the proof of Lemma~\ref{lem:key-lemma}.

\section{Covering almost all vertices with connected matchings}\label{sec:connected-matchings-dense}

\newcommand{\nt}{non-trivial}

\subsection{Preliminaries}
The goal of this section is to prove a version of Lemma~\ref{lem:key-lemma-exact} for almost complete graphs. This result is given in Lemma~\ref{lem:key-lemma-robust}. 

Let $G$ be a graph  with biparts $A$ and $B$ and let $H$ be a subgraph of $G$. 
We call $H$  $\gamma$\emph{-dense} in $G$ if it has at least $\gamma |A| |B|$ edges.
If $H = G$, we often simply say $G$ is $\gamma$-dense. 
Let $H$ be a subgraph of $G$. 
If $H$ has biparts $X \subset A$ and $Y \subset B$ such that $|X|\geq\gamma |A|$ and $|Y|\geq\gamma |B|$, then we call $H$ \emph{$\gamma$-\nt{}} (in $G$), or we say $G$ is $\gamma$-spanned by $H$. Usually, we use the term $\gamma$-\nt{}  when $\gamma \approx 0$ and we use the term $\gamma$-spanned  when $\gamma \approx 1$.

\begin{lemma}
	\label{lem:key-lemma-robust}
	There is an $\eps_0 > 0$ such that for each $0 < \eps \leq \eps_0$ there are $n_0$  and $\rho = \rho (\eps)$	such that for all $n\geq n_0$  the following holds.

	Every $3$-edge-coloured balanced bipartite $(1-\eps)$-dense graph of size $2n$ is $(1- \rho)$-spanned by at most five disjoint  monochromatic connected matchings.
\end{lemma}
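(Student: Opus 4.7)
The plan is to adapt the proof of Lemma~\ref{lem:key-lemma-exact} essentially verbatim, replacing each exact statement by an approximate analogue with a small error term that accumulates linearly in the (bounded) number of case distinctions. Concretely, I would fix a small constant $\gamma = \gamma(\eps)$ with $\gamma \to 0$ as $\eps \to 0$ and, throughout the argument, replace each occurrence of ``non-trivial'' by ``$\gamma$-non-trivial'' (each bipart of size at least $\gamma n$), and each conclusion ``this set is empty'' by ``this set has size at most $\gamma n$''. The observation that licenses the translation is that in a $(1-\eps)$-dense bipartite graph, any induced subgraph both of whose biparts have size at least $\gamma n$ contains at least $(\gamma^2-\eps)n^2$ edges. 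When $\gamma^2 \gg \eps$, such a subgraph qualitatively behaves like a complete bipartite graph for our purposes, and in particular cannot be monochromatic of a colour we have already forbidden.

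First, I would prove robust versions of the preliminary Lemmas~\ref{lem:connected-components-for-two-colours}, \ref{lem:connected-matchings-for-two-colours} and \ref{lem:strange-one}. The robust two-colour structural lemma asserts that $H$ is either $(1-\gamma')$-spanned by a single monochromatic component, admits an approximate $V$-colouring, or is approximately split-coloured, where $\gamma' = O(\gamma)$; the proof is the same case analysis, each step now concluding only that some set is small rather than empty, losing an additive $O(\gamma n)$. The robust matching-cover lemma follows by taking a maximum matching in a $(1-\gamma')$-spanning component, which (by a König-type argument combined with the density assumption) leaves $O(\gamma' n)$ unmatched vertices on each side, and iterating on the remainder. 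The robust three-colour component lemma is then proved identically to Lemma~\ref{lem:strange-one}, by applying the robust two-colour classification to the induced subgraphs that arise.

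With these tools I would then rerun each of Claims~\ref{cla:at-least-3-components}--\ref{cla:top-partition} with the analogous robust substitutions. Since the proof uses only a bounded number of branchings and invocations of the preliminary lemmas, the cumulative error remains $\rho_0 = O(\gamma')$, and the conclusion is that all but $\rho_0 n$ vertices of $H$ fall into the twelve sets pictured in Figure~\ref{fig:nice-colouring}, satisfying approximate containment and emptiness assertions. Finally, I would rerun the final counting argument on the sizes $X,Y,Z,A,B,C,D,E,F$ to locate five monochromatic connected matchings covering all but $O(\rho_0 n)$ vertices. Since every comparison used in that step (such as $X > A+B$) is only invoked to dictate a finite case split, each inequality survives being weakened by an additive $O(\rho_0 n)$; we then output the same five matchings chosen by that branch and absorb the now-uncovered $O(\rho_0 n)$ vertices into the error parameter~$\rho$.

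The main obstacle is the bookkeeping: at each of the many branch points one has to verify that ``take a maximum matching here'' truly leaves only $O(\gamma n)$ vertices unmatched (which depends on the matching living inside a $\gamma$-non-trivial component rather than a truly spanning one), and then tune the chain of constants $\gamma \ll \gamma' \ll \rho_0 \ll \rho$ so that the final $\rho = \rho(\eps)$ still satisfies $\rho(\eps) \to 0$ as $\eps \to 0$. I expect the qualitative argument to go through without new ideas, yielding $\rho(\eps)$ polynomial in $\eps$, but the detailed propagation of error through Claims~\ref{cla:no-2-monochromatic-components-cover-everything}, \ref{cla:one-has-at-most-3-components} and \ref{cla:blue-and-green-touch} (where many subcases are iterated) is where the bulk of the technical work will lie.
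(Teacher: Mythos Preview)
Your proposal is correct and matches the paper's approach essentially step for step: the paper introduces a hierarchy $\eps \ll \delta \ll \gamma \ll \rho$, proves robust analogues of Lemmas~\ref{lem:connected-components-for-two-colours}--\ref{lem:strange-one}, and then reruns each of Claims~\ref{cla:at-least-3-components}--\ref{cla:top-partition} and the final counting argument with the obvious $\gamma$-non-trivial/$\gamma$-empty substitutions, exactly as you describe. One small point worth noting is that in the paper's execution the structure actually becomes \emph{exact} again from Claim~\ref{cla:all-have-exactly-3-components} onward (the approximate arguments force each colour to have precisely three components, and the sets $\cpt{i}{j}{k}$, $\cp{i}{j}{k}$ partition $V(H)$ exactly), so the final counting requires almost no additional error-tracking; but this only simplifies the bookkeeping you anticipated.
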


For the proof of Lemma~\ref{lem:key-lemma-robust} we need some more notation. 
Again, let $G$ be a graph  with biparts $A$ and $B$ and let $H$ be a subgraph of $G$. 
We say $H$ has \emph{$\gamma$-complete degree} in $G$ if $\deg_H(y) > \gamma |A|$ for $y \in B\cap V(H)$ and $\deg_H(x) > \gamma |B|$ for $x \in A\cap V(H)$. 
Clearly, if $H$ has $\gamma$-complete degree in $G$, then in particular, $H$ is $\gamma$-dense in $G$.

The following lemmas are well-known and follow from standard averaging arguments.
\begin{lemma}
	\label{lem:dense-minimum-degree}
	For $\eps > 0$ let $H$ be a $(1-\eps)$-dense bipartite graph.
	Then $H$ has a $(1-\sqrt{\eps})$-spanning subgraph $H'$ with $(1- 2\sqrt{\eps})$-complete degree (in $H$).
\end{lemma}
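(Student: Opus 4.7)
The plan is to construct $H'$ by iterative low-degree vertex deletion. Writing $a=|A|$ and $b=|B|$ for the biparts of $H$, I set $H_0=H$ and at each step either halt (once every surviving $v\in A$ satisfies $\deg_{H_i}(v)>(1-2\sqrt{\eps})b$ and every surviving $v\in B$ satisfies $\deg_{H_i}(v)>(1-2\sqrt{\eps})a$) or else pick an offending vertex and delete it to form $H_{i+1}$. By construction the terminal graph $H'$ has $(1-2\sqrt{\eps})$-complete degree in $H$, so the only task is to verify that it is still $(1-\sqrt{\eps})$-spanning.

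The crux is a non-edge counting argument. Let $r_A$ and $r_B$ be the numbers of vertices removed from $A$ and $B$ over the whole process. Each time a vertex $v\in A\cap V(H_i)$ is deleted, it has at least $2\sqrt{\eps}b$ non-neighbours in $B\cap V(H_i)$, giving that many distinct non-edges of $H$ incident to $v$; the symmetric statement holds for deletions on the $B$-side. Crucially these non-edges are never double-counted: once $v$ is gone, no later deletion can charge the non-edge $\{v,u\}$ again, since $v$ is no longer in the current graph. Because $H$ has at most $\eps ab$ non-edges in total, this yields
\[
2\sqrt{\eps}\,(r_A\, b + r_B\, a) \;\leq\; \eps\, ab,
\]
and hence $r_A/a + r_B/b \leq \sqrt{\eps}/2$, in particular $r_A < \sqrt{\eps}\,a$ and $r_B < \sqrt{\eps}\,b$.

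This immediately gives $|A\cap V(H')|\geq (1-\sqrt{\eps})a$ and $|B\cap V(H')|\geq (1-\sqrt{\eps})b$, so $H'$ is $(1-\sqrt{\eps})$-spanning in $H$, finishing the proof. The only delicate point is ensuring that the non-edge charges do not overlap across iterations; the observation that a removed vertex is no longer in the surviving graph settles this at once, so there is no real obstacle.
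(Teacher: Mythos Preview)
Your iterative deletion is a natural idea, but the key counting step has a gap. You assert that when $v\in A\cap V(H_i)$ is deleted it has at least $2\sqrt{\eps}\,b$ non-neighbours in $B\cap V(H_i)$. This is not justified: the deletion criterion only says $\deg_{H_i}(v)\le (1-2\sqrt{\eps})b$, so the number of non-neighbours of $v$ inside the \emph{current} $B$-side is $|B\cap V(H_i)|-\deg_{H_i}(v)\ge 2\sqrt{\eps}\,b-r_B^{(i)}$, where $r_B^{(i)}$ counts the $B$-vertices already removed. As soon as any $B$-vertex has been deleted this falls short of $2\sqrt{\eps}\,b$, and your displayed inequality $2\sqrt{\eps}(r_Ab+r_Ba)\le \eps ab$ no longer follows. (Summing the corrected lower bounds gives only $2\sqrt{\eps}(r_Ab+r_Ba)-r_Ar_B\le\eps ab$, and this alone does not force $r_A<\sqrt{\eps}\,a$ and $r_B<\sqrt{\eps}\,b$: for $\eps<1/16$ even $(r_A,r_B)=(a,b)$ satisfies it.)

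The clean fix is to drop the iteration and delete once with a milder threshold. Set $A'=\{v\in A:\deg_H(v)>(1-\sqrt{\eps})b\}$ and define $B'$ symmetrically. Each $v\in A\setminus A'$ is incident to at least $\sqrt{\eps}\,b$ non-edges of $H$, whence $|A\setminus A'|\le\sqrt{\eps}\,a$, and likewise $|B\setminus B'|\le\sqrt{\eps}\,b$; so $H':=H[A'\cup B']$ is $(1-\sqrt{\eps})$-spanning. For $v\in A'$ one then has $\deg_{H'}(v)\ge\deg_H(v)-|B\setminus B'|>(1-\sqrt{\eps})b-\sqrt{\eps}\,b=(1-2\sqrt{\eps})b$, and symmetrically on the other side. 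The paper itself gives no proof (it calls the lemma ``well-known and easy to calculate''), and this one-shot argument is presumably what the authors have in mind.
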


\begin{lemma}
	\label{lem:dense-connectivity-and-matchings}
	For $1/4> \eps > 0$ let $H$ be a bipartite graph with biparts $A,B$, having $(1- \eps)$-complete degree. 
	Then any  $2\eps$-\nt{} subgraph of $H$ is connected.
\end{lemma}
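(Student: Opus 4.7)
The plan is to deduce connectedness by a standard common-neighbour argument, once the degrees inside the subgraph have been controlled. Let $H'$ be a $2\eps$-non-trivial subgraph of $H$, with biparts $X \subseteq A$ and $Y \subseteq B$ satisfying $|X| \geq 2\eps|A|$ and $|Y| \geq 2\eps|B|$; one takes $H'$ to be the subgraph induced by $H$ on $X \cup Y$.

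The first step is the degree estimate. For any $x \in X$, at most $|B|-|Y|$ of the more than $(1-\eps)|B|$ neighbours of $x$ in $H$ can lie outside $Y$, so
\[
\deg_{H'}(x) > (1-\eps)|B| - (|B|-|Y|) = |Y| - \eps|B| \geq \tfrac{1}{2}|Y|,
\]
where the final inequality uses $|Y| \geq 2\eps|B|$. Symmetrically $\deg_{H'}(y) > \tfrac{1}{2}|X|$ for every $y \in Y$.

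The second step is the common-neighbour computation. For any two $x_1, x_2 \in X$, inclusion--exclusion inside $Y$ gives
\[
|N_{H'}(x_1) \cap N_{H'}(x_2)| \geq \deg_{H'}(x_1) + \deg_{H'}(x_2) - |Y| > 2(|Y|-\eps|B|) - |Y| = |Y| - 2\eps|B| \geq 0,
\]
so $x_1$ and $x_2$ share a neighbour in $Y$ and are joined by a path of length two in $H'$. Hence all of $X$ lies in one connected component of $H'$. Finally, every $y \in Y$ has strictly positive degree into $X$ by the degree bound, so $y$ is adjacent to a vertex of that component, and $H'$ is connected.

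There is essentially no obstacle: the only subtlety worth flagging is that the \emph{strict} inequality in the definition of $(1-\eps)$-complete degree is what rescues the endpoint case $|Y| = 2\eps|B|$, guaranteeing a non-empty intersection of neighbourhoods rather than merely a non-negative bound. The hypothesis $\eps < 1/4$ is not directly used by the proof beyond ensuring that the $2\eps$-non-triviality condition is not vacuous.
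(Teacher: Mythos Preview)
Your proof is correct; the paper itself omits the proof, remarking only that the lemma is ``well-known and easy to calculate''. Your common-neighbour argument is exactly the standard one, and your reading of ``subgraph'' as the induced bipartite subgraph $[X,Y]$ matches how the lemma is applied throughout the paper.
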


We omit the easy proofs of the next two lemmas.

\begin{lemma}\label{lem:densetodense}
	\label{lem:dense-subgraph}
	For $\delta,\eps > 0$ let $H$ be a $(1-\eps)$-dense bipartite graph with a $\delta$-\ subgraph $H'$. 
	Then $H'$ is $(1-\eps /\delta^2)$-dense in $H'$.
\end{lemma}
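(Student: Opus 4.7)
My plan is to bound the non-edges of $H'$ by the non-edges of $H$, and then rescale this bound using the lower bounds $|X| \geq \delta|A|$ and $|Y| \geq \delta|B|$ coming from $\delta$-non-triviality. I read $H'$ as the induced subgraph of $H$ on its biparts $X \subset A$ and $Y \subset B$, since otherwise the claim $e(H') \geq (1-\eps/\delta^2)|X||Y|$ need not hold — this reading is consistent with how the paper uses the term ``subgraph'' elsewhere in this dense/regularity set-up.

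First I would set up notation: write $A,B$ for the biparts of $H$ and $X \subset A$, $Y \subset B$ for the biparts of $H'$. The $\delta$-non-triviality hypothesis gives $|X| \geq \delta|A|$ and $|Y| \geq \delta|B|$, hence
\[
|A||B| \;\leq\; \frac{|X||Y|}{\delta^{2}}.
\]
This is the only use of the non-triviality hypothesis, and it is what converts a density loss at the scale of $H$ into one at the smaller scale of $H'$.

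Next, by the $(1-\eps)$-density of $H$, the number of non-edges of $H$ between $A$ and $B$ is at most $\eps|A||B|$. Every non-edge of $H'$ between $X$ and $Y$ is, by the induced-subgraph reading, also a non-edge of $H$ between $A$ and $B$, so the number of non-edges of $H'$ is at most $\eps|A||B|$ as well. Combining with the previous inequality,
\[
e(H') \;\geq\; |X||Y| - \eps|A||B| \;\geq\; |X||Y|\Bigl(1 - \frac{\eps}{\delta^{2}}\Bigr),
\]
which is exactly the statement that $H'$ is $(1-\eps/\delta^{2})$-dense (viewed on its own biparts).

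There is no real obstacle here: the argument is a two-line density calculation, which explains why the authors relegate it to ``we omit the easy proofs''. The only conceptual point is the interpretation of ``subgraph'', and the only quantitative point is that the density loss blows up by the factor $1/\delta^{2}$, accounting for the two-sided shrinkage of the biparts — a feature one should keep in mind when applying the lemma inside the proof of Lemma~\ref{lem:key-lemma-robust}, where $\delta$ will have to be chosen not too small compared with $\sqrt{\eps}$.
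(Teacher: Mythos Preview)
Your argument is correct and is precisely the routine non-edge count the authors have in mind when they write ``we omit the easy proofs''; your remark that $H'$ must be read as the induced subgraph on its biparts is well taken and matches how the lemma is actually applied later in the paper. There is nothing to add.
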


\begin{lemma}
	\label{lem:dense-complete-degree-subgraph}
	For $\delta,\eps > 0$ let $H$ be a bipartite graph of $(1-\eps)$-complete degree 
	and $H'$ be a $\delta$-\nt{} subgraph. 
	Then $H'$ has $(1-\eps/\delta)$-complete degree in itself.
\end{lemma}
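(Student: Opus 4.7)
My plan is to prove the lemma by a direct degree-counting argument applied vertex by vertex. Denote the biparts of $H$ by $A$ and $B$, and write $X \subseteq A$, $Y \subseteq B$ for the biparts of the $\delta$-non-trivial subgraph $H'$, so that $|X| \geq \delta |A|$ and $|Y| \geq \delta |B|$. The key observation is that any non-neighbour in $H'$ of a vertex $v\in V(H')$ is also a non-neighbour of $v$ in $H$, so the $(1-\eps)$-complete degree assumption on $H$ gives a strong uniform bound on the number of non-neighbours within $H'$, which then needs to be rescaled by the size of the biparts of $H'$.

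Concretely, I would fix an arbitrary vertex $y \in Y \subseteq B$. The hypothesis that $H$ has $(1-\eps)$-complete degree means that $\deg_H(y) > (1-\eps)|A|$, so there are strictly fewer than $\eps |A|$ non-neighbours of $y$ in $A$, and in particular at most $\eps |A|$ non-neighbours of $y$ within the subset $X$. Since $|X| \geq \delta |A|$ gives $|A| \leq |X|/\delta$, this yields
\[
\deg_{H'}(y) \;\geq\; |X| - \eps |A| \;>\; |X| - \frac{\eps}{\delta}\,|X| \;=\; \Bigl(1 - \frac{\eps}{\delta}\Bigr)|X|,
\]
which is the desired degree bound for vertices on the $Y$-side of $H'$. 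The argument for vertices $x \in X$ is completely symmetric, using the hypothesis $|Y| \geq \delta |B|$ in place of $|X|\geq\delta|A|$.

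There is essentially no obstacle here: the lemma is a purely quantitative transfer principle, saying that if the ambient graph has few non-edges per vertex, then any subgraph whose biparts occupy a $\delta$-fraction of the ambient biparts inherits the same property, up to the rescaling factor $1/\delta$. The only mild point to be careful about is that $\delta$-non-trivial subgraph here is used in the sense that $\deg_{H'}(y)$ counts neighbours of $y$ inside $X$ (i.e., $H'$ is taken to contain the full set of $H$-edges between $X$ and $Y$, as in Lemma~\ref{lem:dense-subgraph}); otherwise an arbitrary subgraph on the same vertex set could drop edges and the conclusion would fail.
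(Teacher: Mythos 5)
Your argument is correct: bounding the number of non-neighbours of each vertex of $H'$ by the at most $\eps|A|$ (resp.\ $\eps|B|$) non-neighbours it has in $H$ and rescaling via $|A|\le|X|/\delta$ is exactly the intended (and only reasonable) proof, which the paper explicitly omits as easy. You are also right to flag that $H'$ must be read as the induced bipartite graph $[X,Y]$, which is how the lemma is applied throughout the paper.
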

 
The proof of the next lemma is given as a warm-up. In the remainder of this section $H$ is a bipartite graph with biparts $\bitop{H}$ and $\bibot{H}.$
 
\begin{lemma}
	\label{lem:dense-eps-span-subgraph}
	For $1/5\geq\eps > 0$ let $H$ be a $2$-edge-coloured  bipartite graph of  $(1-\eps)$-complete degree, with bipartition $A,B$.
	Then $H$ has a $((1-\eps)/2)$-spanning monochromatic component.
\end{lemma}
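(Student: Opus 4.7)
The plan is to start from an arbitrary vertex $v \in A$ (the case where either bipart is empty is trivial), use a majority-colour argument to find a large monochromatic component on the $B$-side, and if that component is not already $((1-\eps)/2)$-spanning, sweep up the remaining vertices into a single component of the other colour. Since $\deg_H(v) \geq (1-\eps)|B|$, one of the two colour classes accounts for at least $(1-\eps)|B|/2$ of $v$'s neighbours; WLOG this colour is red. The red component $R$ containing $v$ then satisfies $|R \cap B| \geq (1-\eps)|B|/2$.

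If additionally $|R \cap A| \geq (1-\eps)|A|/2$, then $R$ is itself the required $((1-\eps)/2)$-spanning monochromatic component. Otherwise $|A \setminus R| > (1+\eps)|A|/2$, and the key observation is that every $w \in R \cap B$ has only blue edges into $A \setminus R$ (a red edge would extend $R$); combined with the $(1-\eps)$-complete-degree hypothesis, this forces $w$ to have at least $|A \setminus R| - \eps|A| > (1-\eps)|A|/2$ blue neighbours in $A \setminus R$.

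The main step---and where I expect to have to be most careful---is to argue that all of $R \cap B$ lies in a single blue component $B^*$. For this I would pick any two $w_1, w_2 \in R \cap B$ and apply inclusion--exclusion to their blue neighbourhoods in $A \setminus R$: each has size at least $|A \setminus R| - \eps|A|$, so their intersection has size at least $|A \setminus R| - 2\eps|A|$. This quantity is strictly positive precisely because $|A \setminus R| > (1+\eps)|A|/2 > 2\eps|A|$; it is here that the hypothesis $\eps \leq 1/5$ (or indeed any bound below $1/3$) is used. Hence $w_1$ and $w_2$ share a blue neighbour in $A \setminus R$ and lie in the same blue component. Having collected $R \cap B$ into $B^*$, the bounds $|B^* \cap B| \geq |R \cap B| \geq (1-\eps)|B|/2$ and $|B^* \cap A| \geq |A \setminus R| - \eps|A| > (1-\eps)|A|/2$ identify $B^*$ as the required $((1-\eps)/2)$-spanning monochromatic component.
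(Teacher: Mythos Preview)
Your proof is correct and follows essentially the same route as the paper's: find a monochromatic component $R$ that is large on one side via a majority-colour argument at a single vertex, and if $R$ fails to be large on the other side, observe that the edges between the large side of $R$ and the complement of $R$ on the other side are all of the second colour and form a single component. The only cosmetic difference is that the paper packages your inclusion--exclusion connectivity step as an appeal to Lemma~\ref{lem:dense-connectivity-and-matchings} (any $2\eps$-non-trivial subgraph of a $(1-\eps)$-complete-degree bipartite graph is connected), which is precisely where the bound $\eps\le 1/5$ enters, since one needs $(1-\eps)/2\ge 2\eps$.
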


\begin{proof}
	Having $(1-\eps)$-complete degree, $H$ has a monochromatic component $\R$ with 
	$|\bitop{\R}| \geq (1-\eps) |\bitop{H}|/2.$ If $\R$ is $((1-\eps)/2)$-spanning we are done. 
	Otherwise the monochromatic subgraph $[\bitop{\R},\bibot{H-R}]$	is $((1-\eps)/2)$-spanning, and it is connected by Lemma~\ref{lem:dense-connectivity-and-matchings}.
\end{proof}

In order to formulate a dense version of Lemma~\ref{lem:connected-components-for-two-colours} we need to define dense variants of $V$-colourings and split colourings.
We say a colouring of $E(H)$ in red and blue is an \emph{$\eps$-$V$-colouring} if there are monochromatic components $\R$ and $\B$ of distinct colours such that
\begin{enumerate}
	\item each of $\R$ and $\B$ is $\eps$-\nt{} in $H;$
	\item $\R \cup \B$ is $(1-\eps)$-spanning in $H;$
	\item $|V(\bitop { \R \cap \B})|\geq (1-\eps)|V(\bitop{H})|$ or $|V(\bibot{\R \cap \B})|\geq (1-\eps)|V(\bibot{H})|$.
\end{enumerate}
A colouring of  $E(H)$ in red and blue is \emph{$\eps$-split}, if 
\begin{enumerate}
	\item all monochromatic components are $\eps$-\nt{};
	\item each colour has exactly two monochromatic components.
\end{enumerate}
The following is a robust analogue of Lemma~\ref{lem:connected-components-for-two-colours}.

\begin{lemma}
	\label{lem:dense-connected-components-for-two-colours}
	Let $\eps < 1/6$.
	If the bipartite  $2$-edge-coloured  graph $H$ has $(1-\eps)$-complete degree,  
	then one of the following holds:
	\begin{enumerate}[\rm (a)]
		\item \label{itm:dense-connected-components-for-two-colours-a} There is a $(1-3\eps)$-spanning monochromatic component, 
		\item \label{itm:dense-connected-components-for-two-colours-b} $H$ has a $3\eps$-$V$-colouring, or
		\item \label{itm:dense-connected-components-for-two-colours-c} the edge-colouring is $2\eps$-split.
	\end{enumerate}
\end{lemma}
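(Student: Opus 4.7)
The plan is to mirror the exact Lemma~\ref{lem:connected-components-for-two-colours}, propagating densities through the argument via the preliminary lemmas. To start, I invoke Lemma~\ref{lem:dense-eps-span-subgraph} to obtain a monochromatic component, say a red one $\R$, whose biparts satisfy $|\bitop\R|\geq(1-\eps)|\bitop H|/2$ and $|\bibot\R|\geq(1-\eps)|\bibot H|/2$, and set $X:=H-\R$. As in the exact proof, by the definition of a connected component every edge of $[\bitop\R,\bibot X]$ and of $[\bibot\R,\bitop X]$ is blue. The three conclusions correspond to the three possibilities for the pair $(|\bitop X|,|\bibot X|)$.

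If both $|\bitop X|<3\eps|\bitop H|$ and $|\bibot X|<3\eps|\bibot H|$, then $\R$ is already a $(1-3\eps)$-spanning monochromatic component, giving (a). Otherwise, suppose without loss of generality $|\bitop X|<3\eps|\bitop H|$ and $|\bibot X|\geq3\eps|\bibot H|$. Since $H$ has $(1-\eps)$-complete degree and $|\bibot X|>\eps|\bibot H|$, every $u\in\bitop\R$ has at least one blue neighbour in $\bibot X$, and (using $\eps<1/6$) any two vertices of $\bibot X$ share a blue neighbour in $\bitop\R$. Consequently, all of $\bibot X$ lies in a single blue component $\B$, and $\bitop\R\subseteq\bitop\B$, so $|\bitop{\R\cap\B}|=|\bitop\R|\geq(1-3\eps)|\bitop H|$. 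After checking $3\eps$-nontriviality of $\R$ and $\B$ (tightening $\eps_0$ if needed), this is the $3\eps$-$V$-colouring of (b).

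In the remaining case both $|\bitop X|\geq3\eps|\bitop H|$ and $|\bibot X|\geq3\eps|\bibot H|$. If a blue edge lies inside $\R$ or inside $X$, I extend it to a large blue component using the complete-degree hypothesis: such an edge already has many blue extensions into the opposite sides of $X$ (or $\R$), and a further degree count forces every remaining vertex to join this component, producing a $(1-3\eps)$-spanning monochromatic component and giving (a). Otherwise all edges inside $\R$ and inside $X$ are red. Lemma~\ref{lem:dense-complete-degree-subgraph} now yields high complete degree inside $\R$ and inside $X$, and the standard neighbour-intersection argument behind Lemma~\ref{lem:dense-connectivity-and-matchings} makes $\R$ and $X$ each a single red component, while $[\bitop\R,\bibot X]$ and $[\bibot\R,\bitop X]$ are each single blue components. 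All four are $2\eps$-nontrivial (using $\eps<1/6$), so the colouring is $2\eps$-split, giving (c).

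The main obstacle will be bookkeeping the density constants so that the announced $3\eps,3\eps,2\eps$ thresholds come out cleanly in each case. In particular, the $V$-colouring case must be handled by deducing $\bitop\R\subseteq\bitop\B$ directly from the complete-degree hypothesis rather than settling for a weaker intersection estimate, and the split case relies on the $3\eps$-nontriviality of $X$ in order to keep $X$ internally red-connected via Lemma~\ref{lem:dense-complete-degree-subgraph}.
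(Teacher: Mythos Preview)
Your proposal is correct and follows essentially the same route as the paper's proof: start from the $((1-\eps)/2)$-spanning monochromatic component $\R$ provided by Lemma~\ref{lem:dense-eps-span-subgraph}, set $X=H-\R$, and split into cases according to whether $|\bitop X|$ and $|\bibot X|$ exceed the $3\eps$ threshold. The only cosmetic difference is that the paper dispatches the connectivity statements (in both the $V$-colouring and the split case) by a direct appeal to Lemma~\ref{lem:dense-connectivity-and-matchings} applied inside $H$, rather than first passing through Lemma~\ref{lem:dense-complete-degree-subgraph}; your detour works as well and yields the same constants.
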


\begin{proof}
	Let $\R$ be an $((1-\eps)/2)$-spanning \connected component in colour red, say. 
	Such a component exists by Lemma~\ref{lem:dense-eps-span-subgraph}.
	Set $X := H - \R$ and note that all edges in  $[\bitop{\R}, \bibot{X}]$ and $[\bibot{\R}, \bitop{X}]$  are blue.
			  
	We first assume that $|\bitop X|< 3\eps |V(\bitop H)|$. 
	If also $|\bibot X|< 3\eps |V(\bibot H)|$, we are done, since then $\R$ is $(1-3\eps)$-spanning.
	Otherwise, $|\bibot X| \geq 3\eps |V(\bibot H)|$, and thus the blue subgraph $[\bibot{X},\bitop{\R}]$ is connected by Lemma~\ref{lem:dense-connectivity-and-matchings} and the colouring is a $3\eps$-$V$-colouring.
			   
	So  by symmetry we can assume that both $|\bitop X| \geq 3\eps |V(\bitop H)|$ and $|\bibot X|\geq 3\eps |V(\bibot H)|$.
	If there is a blue edge in $\R$ or in $X$, then $H$ is spanned by one blue 
	component by Lemma~\ref{lem:dense-connectivity-and-matchings}. 
	Hence, all edges inside $\R$ and $X$ are red  and the colouring is $2\eps$-split, again using Lemma~\ref{lem:dense-connectivity-and-matchings}.
\end{proof}
    
\begin{corollary} \label{cor:dense-split-v}
	Let $\eps < 1/6$.
	If a bipartite $2$-edge-coloured graph $H$ has $(1-\eps)$-complete degree, 
	then 
	\begin{enumerate}[\rm (a)]
		\item \label{cor:dense-split-v-a} there are one or  two $2\eps$-\nt{} monochromatic \connected components that together $(1-3\eps)$-span $H$, and
		\item \label{cor:dense-v-b} if the colouring is not $2\eps$-split, then there is a colour with exactly one $3\eps$-\nt{} component.
	\end{enumerate}
\end{corollary}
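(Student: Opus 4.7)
The plan is to deduce both parts directly from Lemma~\ref{lem:dense-connected-components-for-two-colours}, by going through its three cases.

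For part~(\ref{cor:dense-split-v-a}), I would argue case by case. In case~(a) of the lemma, a single monochromatic component $(1-3\eps)$-spans $H$; since $\eps<1/6$ implies $1-3\eps>2\eps$, that component is automatically $2\eps$-\nt{} and alone furnishes the required cover. In case~(b), the two distinguished components $\R$ and $\B$ of the $3\eps$-$V$-colouring are of different colours, each $3\eps$-\nt{} (hence $2\eps$-\nt{}), and $\R\cup\B$ is $(1-3\eps)$-spanning by definition. In case~(c), the $2\eps$-split colouring has exactly four monochromatic components, all $2\eps$-\nt{}; fixing one colour, every vertex of $H$ lies in one of the (only two) components of that colour, so together they span $H$ outright.

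For part~(\ref{cor:dense-v-b}), the hypothesis rules out case~(c), so we are in case~(a) or~(b) of the lemma. In case~(a), say a red component $R$ $(1-3\eps)$-spans $H$; then any other red component $R'$ is vertex-disjoint from $R$, so $|\bitop{R'}|\leq 3\eps|\bitop{H}|$ and $|\bibot{R'}|\leq 3\eps|\bibot{H}|$, forbidding $R'$ from being $3\eps$-\nt{}. In case~(b), let $\R,\B$ be the components of the $3\eps$-$V$-colouring and assume by symmetry $|\bitop{\R\cap\B}|\geq(1-3\eps)|\bitop{H}|$; then for any further red component $R'$ we have $\bitop{R'}\cap\bitop{\R}=\emptyset$, so $|\bitop{R'}|\leq 3\eps|\bitop{H}|$ and $R'$ is not $3\eps$-\nt{}. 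Since $\R$ itself is $3\eps$-\nt{} (because $1-3\eps>3\eps$ when $\eps<1/6$), red has exactly one $3\eps$-\nt{} component in each situation.

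The one delicate point is that the bounds above sit right at the $3\eps$ boundary and need to be strict for the conclusion ``exactly one'' to hold. I would resolve this by observing that the proof of Lemma~\ref{lem:dense-connected-components-for-two-colours} in fact supplies a strictly $(1-3\eps)$-spanning component (the case distinction is made via a strict threshold on $|\bitop X|,|\bibot X|$), so the complementary sides have strictly less than $3\eps$-fraction and the estimates on $R'$ become strict. I expect this to be the only non-routine aspect; the remainder of the argument is a direct bookkeeping of the three cases provided by the lemma.
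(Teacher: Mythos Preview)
Your proposal is correct and follows the natural route: the paper gives no explicit proof for this corollary (nor for its exact counterpart, Corollary~\ref{cor:split-v}), treating it as an immediate consequence of Lemma~\ref{lem:dense-connected-components-for-two-colours}, and your case analysis is precisely how one unpacks that. Your observation about the strict threshold in the proof of the lemma is apt and is the intended way to resolve the boundary issue; nothing further is needed.
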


Now we prove an analogue of Lemma~\ref{lem:connected-matchings-for-two-colours}.
 
\begin{lemma}\label{lem:dense-connected-matchings-for-two-colours} 
	Let $\eps < 1/6$, and let $H$ be a balanced bipartite  graph of $(1-\eps)$-complete degree whose edges are coloured red and blue. 
	Then either
		\begin{enumerate}[\rm (a)]
			\item $H$ is $(1-5\eps)$-spanned by two vertex disjoint monochromatic connected matchings, one of each colour, or
			\item the colouring is $2\eps$-split and
			\begin{itemize}
				\item $H$ is $(1-2\eps)$ is spanned by one red and two blue vertex disjoint  monochromatic connected matchings and
				\item $H$ is $(1-2\eps)$ is spanned by one blue and two red vertex disjoint  monochromatic connected matchings.
			\end{itemize}
		\end{enumerate}
\end{lemma}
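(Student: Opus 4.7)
The plan is to model the proof on that of the exact Lemma~\ref{lem:connected-matchings-for-two-colours}, invoking the robust structural decomposition from Lemma~\ref{lem:dense-connected-components-for-two-colours} in place of its exact counterpart and tracking losses of order $\eps n$ via the defect form of Hall's theorem and Lemmas~\ref{lem:dense-subgraph}, \ref{lem:dense-complete-degree-subgraph} and \ref{lem:dense-connectivity-and-matchings}. Applying Lemma~\ref{lem:dense-connected-components-for-two-colours} to $H$ places us in one of three cases: (i) some monochromatic component is $(1-3\eps)$-spanning; (ii) the coloring is a $3\eps$-$V$-coloring; or (iii) the coloring is $2\eps$-split. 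Case (iii) yields conclusion (b); cases (i) and (ii) yield conclusion (a).

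For case (iii), mirror the exact argument. Label the red components $\rone,\rtwo$ and the blue components $\bone,\btwo$, each $2\eps$-nontrivial. To obtain one red and two blue connected matchings, extract in turn a maximum red matching in $\rone$, then a maximum blue matching in $\bone$ among still-unmatched vertices, and finally a maximum blue matching in $\btwo$ among vertices still not used. Each of these three bipartite subgraphs inherits $(1-O(\eps))$-complete degree by Lemma~\ref{lem:dense-complete-degree-subgraph}, so the defect form of Hall's theorem shows that each maximum matching leaves only $O(\eps)n$ vertices unmatched. A careful tally, using the approximate $2\times 2$ grid structure of the split coloring, gives the $(1-2\eps)$-spanning bound. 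The symmetric choice produces the second half of (b).

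For cases (i) and (ii), let $\R$ (WLOG red) be either the $(1-3\eps)$-spanning component of (i) or the red component of the $V$-coloring of (ii), where in (ii) we additionally assume WLOG $|\bitop{\R\cap\B}|\geq(1-3\eps)n$ with $\B$ the blue partner. Pick a maximum red matching $M_R$ in $\R$, and set $U_i$ to be the $i$-th side of $H$ minus $V(M_R)$ and $U_i' = U_i\cap V(\R)$, so $|U_i|-|U_i'|\leq 3\eps n$. By the maximality of $M_R$, the induced subgraph on $U_1'\cup U_2'$ contains no red edge, hence is entirely blue. Whenever $|U_i'|\geq 2\eps n$, Lemma~\ref{lem:dense-connectivity-and-matchings} shows that this subgraph is connected and thus lies in a single blue component of $H$; a second defect-Hall argument then produces a connected blue matching $M_B$ vertex-disjoint from $M_R$ of size at least $(n-|M_R|)-O(\eps)n$. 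Summing, $M_R\cup M_B$ is $(1-5\eps)$-spanning. The boundary regime $|U_i'|<2\eps n$ forces $|M_R|\geq(1-O(\eps))n$, from which the bound is immediate. Case (ii) is handled analogously, supplementing $M_R$ with a maximum blue matching in $\B-V(M_R)$ to cover the bottom vertices outside $V(\R)$ via the $V$-structure $|\bibot{\R\cup\B}|\geq(1-3\eps)n$.

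The central obstacle is that the spanning component $\R$ need not have large red min-degree, so the maximum red matching $M_R$ could a priori be small. The key observation is that whenever $M_R$ is small, the leftover subgraph of $\R$ is correspondingly large, entirely blue by the maximality of $M_R$, and inherits enough of $H$'s $(1-\eps)$-complete degree to yield a single connected blue matching of size nearly $n-|M_R|$. Tracking the $\eps$-losses across the cascaded matching extractions, together with the $3\eps$-error from the structural classification, is what produces the constants $5\eps$ and $2\eps$ in the statement.
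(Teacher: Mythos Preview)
Your handling of cases (i) and (ii) mirrors the paper's and is fine. The split case, however, contains a genuine error. Your sequential scheme ``max red in $\rone$, then max blue in $\bone$, then max blue in $\btwo$'' can fail to cover a constant fraction of $H$. Take the clean split with all four blocks of size $n/2$: the max red matching in $\rone$ exhausts both $\bitop{\rone}=\bitop{\bone}$ and $\bibot{\rone}=\bibot{\btwo}$, so the two subsequent blue matchings are (nearly) empty and all of $\rtwo$ --- that is, $n$ vertices --- remains uncovered. No ``careful tally'' rescues this; the order of extraction is wrong, not the bookkeeping.

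The paper's argument in the split case is different and is what makes the bound work: take a maximum red matching in \emph{each} of $\rone$ and $\rtwo$. Each such matching saturates one bipart of its component up to $\eps n$, and by the $2\times 2$ grid structure of a split colouring this forces one of the blue components to have fewer than $\eps n$ leftover vertices on \emph{both} sides. A single maximum blue matching in the other blue component then leaves one of its sides below $\eps n$, and the three matchings together $(1-2\eps)$-span $H$. Swapping colours gives the symmetric bullet. To repair your proof, replace the single red matching by two red matchings (one per red component) and drop to one blue matching.
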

\begin{proof}
	First assume  that the colouring is $2\eps$-split. 
	We take one red maximum matching in each of the two red components. 
	This leaves at least one of the blue components with less than $\eps |\bitop H|$ vertices on each side. 
	We extract a third maximum matching from the leftover of the other blue component, thus leaving one of its sides with less than $\eps|\bitop{H}|$ vertices. 
	All three matchings are clearly connected (or possibly empty, in case of the third matching)
	Thus the three matchings together $(1-2 \eps)$-span $H$.
	Note that we could have switched the roles of red and blue in order to obtain two blue and one red matching that $(1-2\eps)$-span $H$.

	So by Lemma~\ref{lem:dense-connected-components-for-two-colours}, we may assume that either there is a colour, say red, with an $(1-3\eps)$-spanning  component $\R$, or $H$ has a $3\eps$-$V$-colouring, with components $\R$ in red and $\B$ in blue, say. 
	In either case, we take a maximum red matching $\red{M}$  in $\R$. 
	Then there is an induced balanced bipartite subgraph of $H$, whose edges are all blue, which contains all but at most $3\eps |V(H)|$ of the uncovered vertices of each bipart of $H$. 
	If this subgraph is not $2\eps$-\nt{}, we are done. 
	Otherwise, we finish by extracting from it a maximum blue matching $\blue{M'} \subset B$, note that $M'$ is connected by Lemma~\ref{lem:dense-connectivity-and-matchings}. 
	As $H$ has $(1-\eps)$-complete degree and there are no leftover edges in said subgraph, we obtain that $M \cup M'$ $(1-4 \eps)$-span $H$, and we are done.
\end{proof}

We now prove a robust analogue of Lemma~\ref{lem:strange-one}. 
\begin{lemma}
	\label{lem:dense-strange-one}
	Let $1/6^6>\eps>0$.
	Let the edges of the bipartite graph $H$ of  $(1-\eps)$-complete degree be coloured in red, green and blue, such that each colour has at least four ${\eps}^{1/6}$-\nt{} \connected components; then there are three monochromatic  \connected components that together $(1-{\eps}^{1/6})$-span $H$.
\end{lemma}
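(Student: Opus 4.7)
Set $\delta := \eps^{1/6}$; by hypothesis $\delta < 1/6$.  My plan is to mirror the proof of Lemma~\ref{lem:strange-one} step by step, replacing every ``is empty'' by ``has size at most $O(\delta^5)|V(H)|$'', every ``is non-trivial'' by ``is $\delta$-non-trivial'', and every application of Lemma~\ref{lem:connected-components-for-two-colours} or Corollary~\ref{cor:split-v} by its dense analogue (Lemma~\ref{lem:dense-connected-components-for-two-colours}, Corollary~\ref{cor:dense-split-v}).

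To begin, I fix any red $\delta$-non-trivial component $R$.  Because red has three further such components, both biparts of $X := H - R$ have size at least $3\delta|\bitop H|$, so the two green/blue-coloured subgraphs $[\bitop R, \bibot X]$ and $[\bibot R, \bitop X]$ are each $\delta$-non-trivial.  By Lemma~\ref{lem:dense-complete-degree-subgraph} they inherit $(1-\eps/\delta)\geq(1-\delta^5)$-complete degree, so Lemma~\ref{lem:dense-connected-components-for-two-colours} applied with error parameter $\delta^5$ endows each with either a $(1-3\delta^5)$-spanning monochromatic component, or a $3\delta^5$-$V$-colouring, or a $2\delta^5$-split colouring.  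If either subgraph admits an almost-spanning monochromatic component, combining it with the at most two monochromatic components that $(1-3\delta^5)$-span the other subgraph (provided by Corollary~\ref{cor:dense-split-v}(a)) yields three monochromatic components that together $(1-\delta)$-span $H$, and we are done.

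I then run the remaining three sub-cases of the original proof.  In the ``both split'' and the ``mixed split/$V$'' cases the original contradiction scheme carries over: any green or blue edge in $X$, combined with $(1-\eps)$-complete degree and Lemma~\ref{lem:dense-connectivity-and-matchings}, would merge two of the four $\delta$-non-trivial components of the corresponding colour into one, contradicting the four-component hypothesis; the absence of all such edges forces $X$ to be almost entirely red and hence to lie in a single red component, again contradicting that hypothesis.  In the ``both $V$'' sub-case one obtains blue/green components $B_1, G_1$ in $[\bitop R, \bibot X]$ and $B_2, G_2$ in $[\bibot R, \bitop X]$ satisfying the same containment relations as in the original proof, up to error at most $O(\delta^5)|V(H)|$ on each bipart; the four-way case split then either produces a three-component $(1-O(\delta^5))$-cover directly, or reduces to analysing the two cross-rectangles $[\bibot G_1, \bitop G_2]$ and $[\bibot B_1, \bitop B_2]$, which must be $O(\delta^5)$-split (in red/blue and red/green respectively) by the four-component hypothesis, whereupon the final colour-counting contradiction on the rectangles $[\bibot G_i, \bitop B_j]$ carries over verbatim modulo error terms.

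The principal obstacle is strictly bookkeeping: each descent into a subgraph multiplies the current error parameter by $1/\delta$, and each case analysis composes only a bounded number of such descents.  Starting from $\eps \leq \delta^6$ therefore leaves enough slack to end with a $(1-\delta) = (1-\eps^{1/6})$-spanning triple of monochromatic components.  The exponent $1/6$ is dictated by the longest composition chain of such descents, which occurs in the $V$-$V$ sub-case; no fundamentally new geometric idea beyond that of Lemma~\ref{lem:strange-one} is needed.
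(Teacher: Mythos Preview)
Your proposal is correct and follows essentially the same route as the paper's proof, which likewise sets $\gamma=\eps^{1/6}$, descends into the green/blue rectangles $[\bitop R,\bibot X]$ and $[\bibot R,\bitop X]$, and runs the identical three-way case split via Lemma~\ref{lem:dense-connected-components-for-two-colours} and Corollary~\ref{cor:dense-split-v}. One minor bookkeeping point: in the $V$--$V$ sub-case the cross-rectangles $[\bibot{G_1},\bitop{G_2}]$ and $[\bibot{B_1},\bitop{B_2}]$ are only about $\delta^3$-non-trivial in $H$ (they are $O(\delta^2)$-non-trivial inside a $\delta$-non-trivial subgraph), so the split parameter obtained there is $O(\delta^3)$ rather than the $O(\delta^5)$ you state---this is still comfortably below $\delta$, so the final $(1-\delta)$-span goes through, but it is precisely this second descent that forces the exponent $1/6$.
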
   

\begin{proof}
	Set $\gamma:={\eps}^{1/6}$ and let $\R$ be a red $\gamma$-\nt{} component.
	Throughout the proof we shall make use of Lemma~\ref{lem:dense-connectivity-and-matchings} without mentioning it explicitly.
	Since there are three more red $\gamma$-\nt{}  components, the three graphs $X := H - \R$, $[\bitop{\R}, \bibot{X}]$ and $[\bibot{\R}, \bitop{X}]$ are each $\gamma$-\nt{} and  by Lemma~\ref{lem:dense-complete-degree-subgraph}, each of them has $(1-\gamma^2)$-complete degree (in themselves). 
	Moreover, the edges of the latter two graphs are green and blue.
	By Corollary~\ref{cor:dense-split-v}(\ref{cor:dense-split-v-a}) there are one or two $2\gamma^2$-\nt \ monochromatic components that together $(1-3\gamma^2)$-span $[\bibot{\R}, \bitop{X}]$. 
	So, if $[\bitop{\R}, \bibot{X}]$ has a  $(1-3\gamma^2)$-spanning monochromatic \connected component, then we can $(1-3\gamma^2)$-span $H$ with at most three components, which is as desired. 
	Therefore and by symmetry we may assume from now on that none of $[\bitop{\R}, \bibot{X}]$ and $[\bibot{\R}, \bitop{X}]$ has a $(1-3\gamma^2)$-spanning monochromatic component. 
	Suppose $[\bitop{\R}, \bibot{X}]$ has a $2\gamma^2$-split-colouring. 
	By Lemma~\ref{lem:dense-connected-components-for-two-colours}, either $[\bibot{\R}, \bitop{X}]$ is $2\gamma^2$-split or a fraction of $(1-3 \gamma^2)$ of one of $\bibot{\R}$ and $\bitop{X}$ is contained in the intersection of a blue and a green monochromatic \connected component. 
	In the latter case the union of three monochromatic components of the same colour contains a fraction of $(1-3 \gamma^2)$ vertices of one of the biparts of $H$. 
	But this is impossible as each colour has at least four $\gamma$-\nt \ \connected components, and $\gamma >3\gamma^2$. 
	On the other hand, if both $[\bitop{\R}, \bibot{X}]$ and $[\bibot{\R}, \bitop{X}]$ have a $2\gamma^2$-split colouring, then each bipart  of $H$ is contained in the union of four green components  as well as in the union of four blue components, and thus all edges in $X$ are red. 
	But then there are only two $\gamma$-\nt{} red components, $\R$ and $X$, a contradiction.
	
	So by Lemma~\ref{lem:dense-connected-components-for-two-colours}, and by symmetry, we know that $[\bitop{\R}, \bibot{X}]$ and $[\bibot{\R}, \bitop{X}]$ both   have green/blue $3\gamma^2$-$V$-edge-colourings. 
	Thus each of $[\bitop{\R}, \bibot{X}]$ and $[\bibot{\R}, \bitop{X}]$  has  a $3\gamma^2$-\nt{} blue component and a $3\gamma^2$-\nt{} green \connected component, say these are $\bone,$ $\gone$   and $\btwo,$ $\gtwo$ respectively. 
	Furthermore, a fraction of $(1-3\gamma^2)$ of $\bibot{X}$ or $\bitop{\R}$ is contained in the intersection $\bone \cap \gone$, and  a fraction of $\bitop{X}$ or $\bibot{\R}$ is contained in the intersection $\btwo \cap \gtwo.$
	
	We first look at the case where a fraction of $(1-3\gamma^2)$ of $\bibot{X}$ is contained in $\bibot{\bone \cap \gone}.$ If  a fraction of $(1-3\gamma^2)$ of $\bibot{\R}$ is contained in $\bibot{\btwo \cap \gtwo},$ then, as $\gamma >6\gamma^2$, both green and blue have at most two $\gamma$-\nt \ \connected components, which is a contradiction. 
	On the other hand, if a fraction of $(1-3\gamma^2)$ of $\bitop{X}$  is contained in $\bitop{\btwo \cap \gtwo},$ then $H$  is  $(1-3\gamma^2)$-spanned by the union of $\R$ and the blue components in $H$ that  contain $\bone$ and $\btwo$, and we are done.
	
	Consequently we can assume by symmetry and by Lemma~\ref{lem:dense-connected-components-for-two-colours} that a fraction of $(1-3\gamma^2)$ of $\bitop{\R}$ is contained in the intersection $\bitop{\bone \cap \gone}$ and a fraction of $(1-3\gamma^2)$ of $\bibot{\R}$ is contained in the intersection $\bibot{\btwo \cap \gtwo}$. Observe that $[\bibot{\green{G_1}},\bitop{\green{G_2}}]$ is coloured red and blue and  $[\bibot{\blue{B_1}},\bitop{\blue{B_2}}]$ is coloured red and green, since otherwise, we obtain the desired cover. 
    As these two graphs are each $3\gamma^3$-\nt{} subgraphs of $H$, and as $\eps/(3\gamma^3)= \gamma^3/3$, Lemma~\ref{lem:dense-complete-degree-subgraph} implies they  have $(1-\gamma^3/3)$-complete degree (in themselves).
	Suppose there is a red component of $[\bibot{\green{G_1}},\bitop{\green{G_2}}]$ that is $(1-\gamma)$-spanning in $[\bibot{G_1},\bitop{G_2}]$. 
	Such a component, together with $\bone$ and $\btwo$, $(1-2\gamma)$-spans $H$ as $\gamma < 1/3$. 
	So, we can assume $[\bibot{\green{G_1}},\bitop{\green{G_2}}]$ has no red $(1-\gamma)$-spanning red component. 
	Moreover, since there are at least  four $\gamma$-\nt{} blue components, $[\bibot{\green{G_1}},\bitop{\green{G_2}}]$ contains two blue components, which are  $\gamma /2$-\nt{}  each as $\gamma /2 > 3 \gamma^2$.

	Since these blue components are $\gamma$-\nt{} in $H$, $[\bibot{\green{G_1}},\bitop{\green{G_2}}]$ does not have a $\gamma^3$-$V$-colouring (in itself). 
	Thus, by Lemma~\ref{lem:dense-connected-components-for-two-colours} with input $\eps_{Lem \ref{lem:dense-connected-components-for-two-colours}} =  \gamma^3 /3$, $[\bibot{\green{G_1}},\bitop{\green{G_2}}]$ is $2\gamma^3/3$-split coloured in red and blue. 
	Similarly we see that $[\bibot{\blue{B_1}},\bitop{\blue{B_2}}]$ is $2\gamma^3/3$-split coloured in red and green.

	Consider the edges in  $[\bibot{\gone},\bitop{\btwo}]$ and $[\bibot{\bone},\bitop{\gtwo}]$. 
	If any of these edges is green or blue, then our graph is $(1-2\gamma^3/3)$-spanned by three green or by three blue components. 
	On the other hand, if all edges in $[\bibot{\gone},\bitop{\btwo}]$ and $[\bibot{\bone},\bitop{\gtwo}]$ are red, then $[\bibot{\gone\cup\bone},\bitop{\btwo\cup\gtwo}]$ is connected in red by Lemma~\ref{lem:dense-connectivity-and-matchings}, and thus, $H$ has  only {three} $\gamma$-\nt{} red components, a contradiction.
\end{proof}

\subsection{Proof of Lemma~\ref{lem:key-lemma-robust}}
   
We are now ready to prove Lemma~\ref{lem:key-lemma-robust}. 
We will not give specific bounds for $\eps_0 > 0$ and $n_0$ but assume that they are sufficiently small respectively large as we go through the proof. 
For $0 < \eps \leq \eps_0$ let $n \geq n_0$ and $H$ be a balanced bipartite $(1-\eps)$-dense graph which has $(1-\eps)$-complete degree and order $2n$, where $n \geq n_0$.
 
We choose numbers $\delta, \gamma,\rho$ such that 
\begin{equation}\label{equ:largeenough}
	\eps\ll \delta \ll \gamma\ll \rho < 1.
\end{equation}
Although these numbers could in principle be specified, we refrain from doing so in order to not spoil the neatness of the argumentation.
Our aim is to show that $H$ can be $(1-\rho)$-spanned with five vertex disjoint monochromatic connected matchings. 
We suppose that this is wrong in order to obtain a contradiction. 
Lemma~\ref{lem:key-lemma-robust} then follows by Lemma~\ref{lem:dense-minimum-degree}. 

The next claim is the robust analogue of Claim~\ref{cla:at-least-3-components}. 

\begin{claim}\label{cla:dense-at-least-3-components}
	Each colour has at least three $\gamma$-\nt \ \connected components.
\end{claim}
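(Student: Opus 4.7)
The plan is to mirror the proof of the non-dense Claim~\ref{cla:at-least-3-components}, absorbing errors of order $\gamma$ and $\sqrt{\gamma}$ along the way. Suppose for contradiction that red has at most two $\gamma$-\nt{} \connected components, which I denote $R_1$ and $R_2$ (allowing one or both to be absent). Every other red component $R'$ has $\min(|\bitop{R'}|,|\bibot{R'}|) < \gamma n$; splitting small components by which bipart is small, a short count gives that the total number of red edges outside $R_1 \cup R_2$ is at most $2\gamma n^2$ (each top-small component contributes $|\bitop{R'}||\bibot{R'}| < \gamma n \cdot |\bibot{R'}|$, and summing uses $\sum |\bibot{R'}| \leq n$; symmetrically for bottom-small components).

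Next I would take maximum red matchings $M^1 \subseteq R_1$ and $M^2 \subseteq R_2$; these are two monochromatic connected matchings. If $|M^1| + |M^2| \geq (1-\rho)n$ then $M^1 \cup M^2$ already $(1-\rho)$-spans $H$ and we are done, so assume $|M^1| + |M^2| < (1-\rho)n$ and set $H^\ast := H - V(M^1 \cup M^2)$, which is balanced. By maximality of the $M^i$, no red edges of $R_1 \cup R_2$ survive in $H^\ast$, so all red edges in $H^\ast$ lie in small red components and number at most $2\gamma n^2$. Let $H^\ast_{gb}$ denote the green/blue subgraph of $H^\ast$, let $W$ be the set of vertices whose red degree in $H^\ast$ is at least $\sqrt{\gamma}n$, and observe that $|W| \leq 4\sqrt{\gamma}n$ by double counting.

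I would then verify that, after removing $W$ from $H^\ast_{gb}$ and discarding at most $|W|$ further vertices to restore balance, the resulting graph $H'$ has $(1-\delta')$-complete degree in itself for an appropriate $\delta' \ll \rho$; the calculation uses $\deg_H(v) \geq (1-\eps)n$, subtracts $|M^1| + |M^2|$ for neighbours removed as matched partners, subtracts a further $\sqrt{\gamma}n$ for remaining red edges at vertices outside $W$, and works out because of the hierarchy $\eps \ll \gamma \ll \rho$ declared in~\eqref{equ:largeenough}. Applying Lemma~\ref{lem:dense-connected-matchings-for-two-colours} to $H'$ then yields two or three disjoint green/blue connected matchings that together $(1-O(\delta'))$-span $H'$; combined with $M^1, M^2$, this produces at most five monochromatic connected matchings that $(1-\rho)$-span $H$, contradicting the standing assumption in the proof of Lemma~\ref{lem:key-lemma-robust}.

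The main obstacle is quantitative bookkeeping. The non-dense analogue is a clean three-line argument, but here I must simultaneously balance three error sources -- red edges surviving inside small red components, matching defects inside $R_1 \cup R_2$, and the density loss caused by removing $V(M^1 \cup M^2)$ -- against the target uncovered fraction $\rho$. The thresholds ($\sqrt{\gamma}n$ for defining $W$, and roughly $\sqrt{\gamma}/\rho$ as the input parameter for the two-colour lemma) must be chosen so that every accumulated error term is $o(\rho)$, which is exactly what the hierarchy $\eps \ll \delta \ll \gamma \ll \rho$ buys us.
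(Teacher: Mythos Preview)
Your proposal is correct and follows essentially the same outline as the paper's proof: both mirror the non-dense Claim~\ref{cla:at-least-3-components}, bound the red edges in small components by $2\gamma n^2$, extract at most two red connected matchings, and feed the green/blue leftover into Lemma~\ref{lem:dense-connected-matchings-for-two-colours}. The only technical difference lies in the clean-up step: the paper first \emph{deletes} all red edges in small components (yielding a $(1-3\gamma)$-dense spanning subgraph with no stray red edges), then passes through the density pipeline Lemma~\ref{lem:dense-subgraph} $\to$ Lemma~\ref{lem:dense-minimum-degree} to obtain a high-complete-degree subgraph of the leftover; you instead keep the edges, remove the $O(\sqrt{\gamma}n)$ vertices of high surviving red degree, and compute the complete degree by hand. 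Both routes are valid under the hierarchy~\eqref{equ:largeenough}; the paper's has the advantage of reusing existing lemmas with no new bookkeeping, while yours avoids the density detour at the cost of the explicit threshold argument you describe.
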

\begin{proof}
	Suppose  the claim is wrong for colour red, say. 
	Let $\mathcal Y$ be the set of all red components with top bipart smaller than $\gamma n$ and let $\mathcal Z$ be the set of all red components with bottom bipart smaller than $\gamma n$. 
	The total number of edges in red components that are not $\gamma$-\nt{} is less than $$\sum_{Y\in\mathcal Y} \gamma n |\bibot{Y}| + \sum_{Z\in\mathcal Z} \gamma n |\bitop{Z}| < 2 \gamma n^2.$$ 
	Thus, deleting the (red) edges of all $Y\in\mathcal Y\cup\mathcal Z$, we obtain 
	a spanning subgraph~$H'$ of $H$ that is $(1-3\gamma)$-dense  in itself and in which each red component is either $\gamma$-\nt{} or trivial.
			
	By assumption, there are two (possibly trivial) red components $R_1$ and $R_2$ in $H'$, such that all other red components are trivial. 
	Let $\red{M}$ be a maximum red matching in $\rone\cup \rtwo$. 
	Then every edge in the balanced bipartite subgraph $X:= H' - \red{M}$ is green or blue. 

	If the (at most) two connected matchings in $\red{M}$ together $(1-\rho)$-span $H$, we are done.
	Otherwise $X$ is $\rho$-\nt{} in $H'$, and thus $(1-(\rho/20)^2)$-dense, by Lemma~\ref{lem:densetodense} and since we assume $3\gamma\leq (\rho^2/20)^2$. 

	We apply Lemma~\ref{lem:dense-minimum-degree} to obtain a subgraph $H'' \subseteq X$ that $(1-\rho/20)$-spans $X$ and has $(1- \rho/10)$-complete degree.
	By Lemma~\ref{lem:dense-connected-matchings-for-two-colours}, $H''$ can be $(1-\rho/2)$-spanned  with three vertex-disjoint monochromatic connected matchings. 
	So in total we found at most five vertex-disjoint monochromatic connected matchings that together $(1-\rho)$-span $H$.
\end{proof} 
 
\newcommand{\et}{empty}

A subgraph $X  \subset H$ is called \emph{$\eps$-\et}, if both $|\bibot{X}| < \eps |\bibot{H}|$ and $|\bitop{X}| < \eps |\bitop{H}|$ hold.
The next claim is a robust version of Claim~\ref{cla:no-2-monochromatic-components-cover-everything}.

\begin{claim}
	\label{cla:dense-no-2-monochromatic-components-cover-everything}
	There are no two monochromatic \connected components that together $(1-\gamma/2)$-span $H$.
\end{claim}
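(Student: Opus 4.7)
The plan is to mimic the proof of the exact analogue, Claim~\ref{cla:no-2-monochromatic-components-cover-everything}, replacing ``empty'' throughout by ``$\delta$-\et{}'' for an auxiliary constant $\delta$ chosen with $\eps\ll\delta\ll\gamma$, and replacing each appeal to Lemma~\ref{lem:connected-matchings-for-two-colours} by one to its robust counterpart Lemma~\ref{lem:dense-connected-matchings-for-two-colours}. Suppose for contradiction that there are monochromatic components $R$ (red) and $B$ (blue) that together $(1-\gamma/2)$-span $H$; by Claim~\ref{cla:dense-at-least-3-components} we may assume they have distinct colours.

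First I would take a maximum red matching $M^{\text{red}}$ in $R$, and then a maximum blue matching $M^{\text{blue}}$ in $B - V(M^{\text{red}})$, and set $R':=R-V(M^{\text{red}}\cup M^{\text{blue}})$ and $B':=B-V(M^{\text{red}}\cup M^{\text{blue}})$. Maximality of $M^{\text{red}}$ forbids red edges in $[\bitop{B'},\bibot{R'}]$ and $[\bibot{B'},\bitop{R'}]$, and maximality of $M^{\text{blue}}$ forbids blue ones, so any edges there are green. Using the $(1-\eps)$-complete degree of $H$, one checks that either $[\bibot{B'},\bitop{R'}]$ is $\delta$-\et{} (in which case Lemma~\ref{lem:dense-connected-matchings-for-two-colours} on $[\bibot{R'},\bitop{B'}]$ gives a green matching that, together with $M^{\text{red}}$ and $M^{\text{blue}}$, $(1-\rho)$-spans $H$), or the analogous statement holds with the two parts swapped, or both of the two bipartite subgraphs are $\delta$-\nt{}.

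In the remaining case I would pick a maximum green matching $M^{\text{green}}_1$ in the green component containing $[\bitop{B'},\bibot{R'}]$ and a maximum green matching $M^{\text{green}}_2$ in the green component containing $[\bibot{B'},\bitop{R'}]$ (one matching if these coincide), and set $R'':=R'-V(M^{\text{green}}_1\cup M^{\text{green}}_2)$, $B'':=B'-V(M^{\text{green}}_1\cup M^{\text{green}}_2)$. Maximality, together with the $(1-\eps)$-complete degree (so that ``$2\delta$-\nt{} on one side forces $\delta$-\et{} on the other''), lets me conclude that one of $R''$, $B''$ is $\delta$-\et{}, say $B''$. If $R''$ contains few green edges, a further maximum blue matching finishes; otherwise, maximality of $M^{\text{green}}_1,M^{\text{green}}_2$ forces both to be $\delta$-\et{}, so $R'\cup B'$ has essentially no green edges, and Lemma~\ref{lem:densetodense} together with Lemma~\ref{lem:dense-connected-matchings-for-two-colours} applied inside $R'$ yields at most three further connected matchings that, combined with $M^{\text{red}}$ and $M^{\text{blue}}$, $(1-\rho)$-span $H$.

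The main obstacle is purely bookkeeping: at each step the leftover may shrink by a $\delta$-fraction or acquire a $\delta$-fraction of ``uncertain'' vertices, and when Lemma~\ref{lem:dense-connected-matchings-for-two-colours} is invoked on a subgraph one must first restore complete degree via Lemma~\ref{lem:dense-complete-degree-subgraph} and Lemma~\ref{lem:dense-minimum-degree}, which again introduces error. The hierarchy $\eps\ll\delta\ll\gamma\ll\rho$ from~\eqref{equ:largeenough} is arranged precisely so that the finitely many such losses accumulate to at most $\rho$, delivering the desired $(1-\rho)$-spanning family of at most five monochromatic connected matchings and contradicting the standing assumption of the section.
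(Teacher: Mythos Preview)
Your proposal is correct and follows essentially the same route as the paper's proof: the same sequence of matchings $M^{\text{red}}, M^{\text{blue}}, M^{\text{green}}_1, M^{\text{green}}_2$, the same case split on whether the green leftover contains a further green edge, and the same appeal to Lemma~\ref{lem:dense-connected-matchings-for-two-colours} on the residual piece. The only cosmetic difference is that the paper does not thread an auxiliary constant $\delta$ through the argument but instead uses $\gamma$ for the ``empty'' threshold in the first branch and $\eps$ in the analysis of $R'',B''$ (obtaining that one of them is $\eps$-\et{} rather than $\delta$-\et{}), and it applies the final lemma to $R''$ rather than $R'$; since the green matchings are then of size below $2\eps n$, this amounts to the same thing.
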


\begin{proof}
	Suppose the claim is wrong and there are monochromatic components $R$ and $B$ that together $(1-\gamma/2)$-span $H$. 
	By Claim~\ref{cla:dense-at-least-3-components} we can assume that they have distinct colours, say $R$ is red and $B$ is blue.
	Take a red matching $\red{M^{\mbox{\scriptsize red}}}$ of maximum size  in $\R$
	and  a blue matching $\blue{M^{\mbox{\scriptsize blue}}}$ of maximum size in $\B - V(\red{M^{\mbox{\scriptsize red}}})$. 
	Set  $\rp := \R - V(\red{M^{\mbox{\scriptsize red}}} \cup \blue{M^{\mbox{\scriptsize blue}}})$ and  $\bp := \B - V(\red{M^{\mbox{\scriptsize red}}} \cup \blue{M^{\mbox{\scriptsize blue}}})$. 
	By maximality, any edge  between $\bitop{\bp}$ and $\bibot{\rp}$ is green. 
	The same holds for the edges between $\bibot{\bp}$ and $\bitop{\rp}$. 
			 
	If $[\bibot{B'},\bitop{R'}]$ is $\gamma$-\et, we finish by picking a maximum matching in $[\bibot{R'},\bitop{B'}]$.  We proceed analogously if $[\bibot{R'},\bitop{B'}]$ is $\gamma$-\et.
	So at least one $R'$ or $B'$ is $\gamma$-\nt{}. 
	Thus, since $H$ has $(1-\eps)$-complete degree, all edges of $[\bibot{B'},\bitop{R'}]$ lie in the same green component. 
	The same holds for $[\bibot{R'},\bitop{B'}]$.
			
	{Assuming that both are	non-empty we now pick} now pick a maximum matching in each of the green components of  $H- V(\red{M^{\mbox{\scriptsize red}}} \cup \blue{M^{\mbox{\scriptsize blue}}})$ that contain $[\bitop{\bp},\bibot{\rp}]$, $[\bibot{\bp},\bitop{\rp}]$. (If this is the same component, we only pick one matching.  If $R'$ or $B'$ is $\gamma$-empty, we let the matchings be empty.) Call these green matchings $\green{M^{\mbox{\scriptsize green}}_1}$ resp.~$\green{M^{\mbox{\scriptsize green}}_2}$. 
	Let $B'' :=B'-V(M^{\mbox{\scriptsize green}}_1 \cup M^{\mbox{\scriptsize green}}_2 )$ and $R'' :=R'-V(M^{\mbox{\scriptsize green}}_1 \cup M^{\mbox{\scriptsize green}}_2 )$.

	Observe that by the maximality of $\green{M^{\mbox{\scriptsize green}}_1}$ and~$\green{M^{\mbox{\scriptsize green}}_2}$,  if one of  $\bibot{R''}$, $\bitop{B''}$ has size at least $\eps n$, then the other one is empty. The same holds for the sets $\bibot{B''}$, $\bitop{R''}$. Thus one of the two graphs $R''$, $B''$ is $\eps$-\et, say this is $B''$. If $R''$ is $2\gamma$-\et, we are done, so we can assume that $R''$ is $\gamma$-\nt{}. 
		
	The edges in $R''$ are green and blue. If $R''$ contains no green edges, we can pick another blue matching of maximum size and are done. Then again, if $R''$ contains a green edge, it follows by  maximality of $\green{M^{\mbox{\scriptsize green}}_1}$ and~$\green{M^{\mbox{\scriptsize green}}_2}$ that both of them have a size of less than $2\eps n$. In this case we ignore $\green{M^{\mbox{\scriptsize green}}_1}$ and~$\green{M^{\mbox{\scriptsize green}}_2}$ and finish as follows: By Lemma~\ref{lem:dense-complete-degree-subgraph}, $R''$ has $(1-\eps/\gamma)$-complete degree in itself. So, by Lemma~\ref{lem:dense-connected-matchings-for-two-colours}, $R''$  can be $(1-5\eps/\gamma)$-spanned by at most 3 vertex disjoint monochromatic connected matchings. This proves the claim.
\end{proof}

\begin{claim}
	\label{cla:dense-no-monochromatic-component-included-in-another}
	Let $Y$ and $Z$ be monochromatic \connected components  
	of distinct colours such that $Y \cap Z$ is $2\eps$-\nt{}. 
	Then $Y-Z$ is not $\gamma /4$-\et. 

\end{claim}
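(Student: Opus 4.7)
The plan is to mirror the non-robust Claim~\ref{cla:no-monochromatic-component-included-in-another}, substituting ``non-trivial'', ``empty'' and ``spanning'' by their $\eps$-robust analogues and replacing Lemma~\ref{lem:connected-components-for-two-colours} and Claims~\ref{cla:at-least-3-components},~\ref{cla:no-2-monochromatic-components-cover-everything} by Lemma~\ref{lem:dense-connectivity-and-matchings} and Claims~\ref{cla:dense-at-least-3-components},~\ref{cla:dense-no-2-monochromatic-components-cover-everything}. Assume without loss of generality that $Y$ is red and $Z$ is blue, and suppose toward a contradiction that $Y-Z$ is $\gamma/4$-empty. Set $X := H - (Y \cup Z)$; any edge between $Y \cap Z$ and $X$ must be green.

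First I would show that $X$ contains no green edge. By Claim~\ref{cla:dense-at-least-3-components} there is a $\gamma$-non-trivial blue component $Z' \neq Z$, and because $Y-Z$ is $\gamma/4$-empty we obtain $Z'-Y \subseteq X$ with bipart sizes at least $3\gamma n/4$, so $X$ is $3\gamma/4$-non-trivial. The two bipartite graphs $[\bitop{Y \cap Z}, \bibot{X}]$ and $[\bibot{Y \cap Z}, \bitop{X}]$ are purely green and $2\eps$-non-trivial (using $\gamma \gg \eps$), hence each sits inside a single green component $G_1$ and $G_2$ by Lemma~\ref{lem:dense-connectivity-and-matchings}. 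If $G_1=G_2$ then $G_1 \cup Z$ covers everything except $Y-Z$ and thus $(1-\gamma/4)$-spans $H$, contradicting Claim~\ref{cla:dense-no-2-monochromatic-components-cover-everything}. Applying $(1-\eps)$-complete degree once more, every $u \in \bitop{X}$ has at least $\eps n$ green neighbours in $\bibot{Y \cap Z}$, so $u \in G_2$, and symmetrically $\bibot{X} \subseteq G_1$; any green edge in $X$ would then merge $G_1$ and $G_2$, a contradiction.

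Next, Claim~\ref{cla:dense-at-least-3-components} yields a third $\gamma$-non-trivial green component $G' \notin \{G_1,G_2\}$. Since $X \cup (Y \cap Z) \subseteq G_1 \cup G_2$ as vertex sets, we get $G' \subseteq (Y-Z) \cup (Z-Y)$; the $\gamma/4$-emptiness of $Y-Z$ then forces $G' \cap Z$ to be $3\gamma/4$-non-trivial while $G'-Z \subseteq Y-Z$ stays $\gamma/4$-empty. Hence the pair $(G',Z)$ satisfies the same hypotheses as $(Y,Z)$, but with green in place of red.

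The final step is to rerun the first-stage argument with $(G',Z)$ instead of $(Y,Z)$ --- now the forbidden third colour is red rather than green --- to deduce that $H-(G' \cup Z)$ carries no red edge. Since $X \subseteq H-(G' \cup Z)$, the graph $X$ contains no red edge either, so every edge of $X$ is blue. Lemma~\ref{lem:dense-connectivity-and-matchings} then places $X$ inside a single blue component $B'$, and $B' \neq Z$ because $X \cap Z = \emptyset$; but $B' \cup Z$ covers $H$ outside of $Y-Z$, so it $(1-\gamma/4)$-spans $H$, once more contradicting Claim~\ref{cla:dense-no-2-monochromatic-components-cover-everything}. The main obstacle in writing this out rigorously will be the careful bookkeeping of error parameters: each application of Lemma~\ref{lem:dense-connectivity-and-matchings} requires its input to be $2\eps$-non-trivial, and each appeal to Claim~\ref{cla:dense-no-2-monochromatic-components-cover-everything} requires coverage of fraction at least $1-\gamma/2$; verifying these at every step is precisely where the hierarchy $\eps \ll \delta \ll \gamma$ from~\eqref{equ:largeenough} is used.
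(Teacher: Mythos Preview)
Your proposal is correct and follows essentially the same approach as the paper's proof: assume $Y-Z$ is $\gamma/4$-empty, use Claim~\ref{cla:dense-at-least-3-components} to see that $X=H-(Y\cup Z)$ is sufficiently non-trivial, apply Lemma~\ref{lem:dense-connectivity-and-matchings} and Claim~\ref{cla:dense-no-2-monochromatic-components-cover-everything} to force $X$ to be green-free, pick a third green component $G'$ trapped in $(Y-Z)\cup(Z-Y)$, rerun the argument with $(G',Z)$ to make $X$ red-free, and finish by observing that the resulting blue $X$ together with $Z$ gives a forbidden two-component near-cover. Your write-up is in fact slightly more explicit than the paper's (e.g.\ the verification that $\bitop{X}\subseteq G_2$ and $\bibot{X}\subseteq G_1$), and your closing remark about parameter bookkeeping is exactly the point of the hierarchy~\eqref{equ:largeenough}.
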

\begin{proof}
	Let $Y$ be a red component, $Z$ be a blue component, and let $X := H - (Y \cup Z)$. 
	Suppose that $Y-Z$ is $\gamma/4$-\et. 
	We first note that all edges in  $[\bitop{Y \cap Z}, \bibot{X}]$ and $[\bibot{Y \cap Z}, \bitop{X}]$ are green. 
	Moreover, by Claim~\ref{cla:dense-at-least-3-components}, there is another $\gamma$-\nt{} blue component in $H$ and hence, $X$ is $2\eps$-\nt{} in $H$, since  $\gamma - \gamma /4 > 2\eps$ by~\eqref{equ:largeenough}.

	Thus the subgraphs  $[\bitop{Y \cap Z}, \bibot{X}]$ and $[\bibot{Y \cap Z}, \bitop{X}]$ are connected  in green by   Lemma~\ref{lem:dense-connectivity-and-matchings}. 
	But they cannot belong to the same green component, since otherwise $H$ is $(1-\gamma/4)$-spanned by the union of said green component and $Z$, which is not possible by Claim~\ref{cla:dense-no-2-monochromatic-components-cover-everything}. 
	Consequently, $X$ has no green edges.
	By  Claim~\ref{cla:dense-at-least-3-components} there is   a  green $\gamma$-\nt{} 
	component $G \subset  Y \cup Z$. 
	As $H = Z  \cup (Y -Z) \cup X$ and $Y-Z$ is $(\gamma/4)$-\et, we obtain that  $G \cap Z$ is $(3 \gamma /4)$-\nt{} in $H$ and   $ G - Z \subset Y-Z$  is $(\gamma/4)$-\et. 
	Thus  $G$ has the same properties as $Y$ with respect to $Z$ and we can repeat the same  arguments as above to obtain  that all edges in $X$ are blue. 
	Hence $X$ is connected in blue by  Lemma~\ref{lem:dense-connectivity-and-matchings}. 
	But this is a contradiction to Claim~\ref{cla:dense-no-2-monochromatic-components-cover-everything}, as $X$ and $Z$ together $(1-\gamma/4)$-span~$H$.
\end{proof}

\begin{claim}
	\label{cla:dense-one-has-at-most-3-components}
	There is a colour that has exactly three $\delta$-\nt \ \connected components.
\end{claim}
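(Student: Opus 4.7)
The plan is to mirror the exact proof of Claim~\ref{cla:one-has-at-most-3-components}, replacing every appeal to ``trivial/non-trivial'' by its density analogue at a carefully chosen scale and using the robust lemmas from this subsection in place of their exact counterparts. Together with Claim~\ref{cla:dense-at-least-3-components} (giving at least three $\gamma$-\nt{} components per colour, hence at least three $\delta$-\nt{} ones since $\gamma\gg\delta$), it suffices to show that some colour has at most three $\delta$-\nt{} components. So I suppose for contradiction that every colour has at least four $\delta$-\nt{} components.

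First I would apply Lemma~\ref{lem:dense-strange-one} with input parameter $\eps':=\delta^6$ (which is admissible because $\eps\ll\delta$ guarantees $H$ has $(1-\eps')$-complete degree after passing to a tiny subgraph via Lemma~\ref{lem:dense-minimum-degree}; alternatively, since $\delta$-\nt{} implies $\eps'^{1/6}=\delta$-\nt{}, the hypothesis is met directly). This produces three monochromatic components $X,Y,Z$ that together $(1-\delta)$-span~$H$. I then split into cases according to the colour pattern of $(X,Y,Z)$, exactly as in the proof of Claim~\ref{cla:one-has-at-most-3-components}. If all three have the same colour, we contradict $(1-\delta)$-spanning together with the existence of a fourth $\delta$-\nt{} component of that colour. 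If two, say $X$ and $Y$, are red, then $H-(X\cup Y)$ still contains a $\delta$-\nt{} red component $R^*$ by assumption; its intersection with $Z$ is $2\eps$-\nt{} (because $\eps\ll\delta$) and $R^*-Z$ is $(\gamma/4)$-\et{} (as $R^*$ lies almost entirely in $Z$), contradicting Claim~\ref{cla:dense-no-monochromatic-component-included-in-another}.

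The main case, and the main obstacle, is when $X,Y,Z$ have pairwise different colours (red, blue, green, say). Here I would follow the chain of deductions of the exact proof, but in each step replace ``is empty'' by ``is $\gamma_i$-\et{}'' and ``is non-trivial'' by ``is $\gamma_i$-\nt{}'' for a decreasing sequence of density parameters $\delta\ll\gamma_1\ll\gamma_2\ll\dots\ll\gamma$. The analogue of~\eqref{equ:bot-YcapZ-small} is derived by observing that if $(Y\cap Z)-X$ is $\gamma_1$-\nt{}, then by $(1-\eps)$-complete degree of $H$ the bipartite graph $[\bibot X,\bitop{(Y\cap Z)-X}]$ is connected (Lemma~\ref{lem:dense-connectivity-and-matchings}), and all its edges are green or blue, forcing $\bibot X$ to be almost contained in $Y\cup Z$ and similarly for $\bitop X$, contradicting Claim~\ref{cla:dense-no-2-monochromatic-components-cover-everything}. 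The analogues of~\eqref{equ:R'-R''}--\eqref{equ:ZcapY} follow the same logic: any red edge from $R_i\cap(Y-Z)$ to $R_j\cap(Z-Y)$ with $i\neq j$ would connect these two red components, so by Lemma~\ref{lem:dense-connectivity-and-matchings} all but a tiny bipartite slice must vanish, and by pigeonhole one of the three $\delta$-\nt{} red components outside of $X$ has its relevant intersections $\gamma_2$-\et. Iterating gives almost-containment of biparts of $Y\setminus(X\cup Z)$ and $Z\setminus(X\cup Y)$ in $X$, which then produces (via Lemma~\ref{lem:dense-connectivity-and-matchings}) green and blue components $G$ and $B$ so that $H-X-((G\cap Y)\cup(B\cap Z))$ is $\gamma_3$-\et{} on one side. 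A fourth $\delta$-\nt{} green component $G'$ then has $\bibot{G'-X}$ and $\bitop{G'-X}$ both $\gamma_3$-\et, while $\bibot{G'\cap X}$ and $\bitop{G'\cap X}$ are $\delta$-\nt, contradicting Claim~\ref{cla:dense-no-monochromatic-component-included-in-another}.

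The hard part is bookkeeping the density parameters: each application of Lemma~\ref{lem:dense-connectivity-and-matchings} or Lemma~\ref{lem:dense-complete-degree-subgraph} loses a factor, and each "almost-empty" conclusion must be robust enough to feed into the next step. Because only a bounded number of such steps occur, one can fix in advance a finite hierarchy $\eps\ll\delta\ll\gamma_1\ll\dots\ll\gamma_k\ll\gamma$, so~\eqref{equ:largeenough} provides all the slack needed. No genuinely new combinatorial idea is required beyond the exact proof; the delicate part is simply verifying that every ``empty'' versus ``non-empty'' dichotomy survives after being relaxed to ``$\gamma_i$-\et'' versus ``$\gamma_{i+1}$-\nt,'' which is possible because the parameters are chosen with enough separation.
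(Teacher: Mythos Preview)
Your proposal is correct and follows essentially the same route as the paper's proof: reduce to showing ``at most three,'' invoke Lemma~\ref{lem:dense-strange-one} to get three covering components, then run the three-colour case analysis from Claim~\ref{cla:one-has-at-most-3-components} with robust thresholds, finishing via Claim~\ref{cla:dense-no-monochromatic-component-included-in-another}. The only cosmetic differences are that the paper applies Lemma~\ref{lem:dense-strange-one} directly at scale~$\eps$ (using $\delta>\eps^{1/6}$ so that $\delta$-\nt{} implies $\eps^{1/6}$-\nt{}) rather than at scale~$\delta^6$, and it tracks explicit constants ($\eps$, $2\eps$, $\eps^{1/6}$, $\gamma/4$) instead of an abstract hierarchy $\gamma_1\ll\cdots\ll\gamma_k$; both choices work.
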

\begin{proof}
			
	We show that there is a colour with at most three $\delta$-\nt{} components. 
	This together with  Claim~\ref{cla:dense-at-least-3-components} yields the desired result.
	So suppose otherwise. Then each colour has at least four $\delta$-\nt{} components. 
	By Lemma~\ref{lem:dense-strange-one}, there are \connected components $X,$ $Y$ and $Z$ that together $(1- \eps^{1/6})$-span~$H.$ 
			
	By assumption, and as $\delta > \eps^{1/6}$ by~\eqref{equ:largeenough}, not all of $X,$ $Y$ and $Z$ have the same colour. 
	If two of these components, say $X$ and $Y$, have the same colour, say red, then $H-(X \cup Y)$ contains a red component that is $\delta$-\nt{} in $H$, by the assumption that our claim is false. 
	As $\delta \geq  \eps^{1/6} + 2 \eps $ by~\eqref{equ:largeenough}, we have that the intersection of this red component with $Z$ is $2\eps$-\nt{} in $H$.
	Hence we get a contradiction to Claim~\ref{cla:dense-no-monochromatic-component-included-in-another} as $\gamma/4 > \eps^{1/6}$  by~\eqref{equ:largeenough}.
			
	So assume $X$ is red, $Y$ is blue and $Z$ is green. 
	We claim that (after possibly swapping top and bottom parts)
	\begin{equation}
		\label{equ:dense-bot-YcapZ-small}
		\bibot{(Y \cap  Z) - X} \text{ has less than } \eps n \text{ vertices}.
	\end{equation}
	Indeed, otherwise $(Y \cap  Z) - X$ is $\eps$-\nt. 
	Then, as $[\bibot{X}, \bitop{(Y \cap Z) -X}]$ is $ \eps$-\nt{} and its edges are green and blue, we get $\bibot{X} \subset Y \cup Z$ since every vertex in $\bibot{X}$ sees a vertex in $\bitop{Y \cap Z}$. 
	In the same way we obtain $\bitop{X} \subset Y \cup Z$. 
	Thus $Z \cup Y$ is $(1-\eps^{1/6})$-\nt{}, which is not possible by  Claim~\ref{cla:dense-no-2-monochromatic-components-cover-everything}.
	This proves~\eqref{equ:dense-bot-YcapZ-small}.

	By assumption, $H - X$ contains three $\delta$-\nt{} red components $R_1$, $R_2$ and $R_3$, say. 
	For $i \neq j$, $[\bitop{R_i \cap (Y-Z)}, \bibot{R_j \cap (Z-Y)}]$ has no red, blue or green edges and thus cannot be $\eps$-\nt{}. So for at most one $i \in \{1,2,3\}$ the subgraph $R_i \cap [\bitop{Y - Z}, \bibot{Z - Y} ]$ is $\eps$-non-trivial. The same holds for $[\bibot{R_i \cap (Y-Z)}, \bitop{R_j \cap (Z-Y)}]$. Consequently, and by the pigeonhole principle we can assume that
	\begin{equation}\label{equ:dense-R'-R''}
		\text{none of } R_1 \cap [\bitop{Y - Z}, \bibot{Z - Y} ] \text{ and } R_1 \cap [\bibot{Y - Z}, \bitop{Z - Y} ] \text{  is } \eps \text{-\nt{}.}
	\end{equation}
	By~\eqref{equ:dense-R'-R''} and as $R_1$ is $\delta$-\nt{}, at least one of $R_1 \cap Z$, $R_1 \cap Y$ is $3 \eps$-\nt{}. We will assume the former.
	Thus, by~\eqref{equ:dense-bot-YcapZ-small} $ \bibot{R_1 \cap (Y-Z)}$  has a size of at least~$ 2\eps n$. 
	Hence, by~\eqref{equ:dense-R'-R''} we get:
	\begin{equation}\label{equ:dense-R_1captopZ-Y-klein}
		| \bitop{R_1 \cap Z-Y}| < \eps n.
	\end{equation}
	Moreover,  Claim~\ref{cla:dense-no-monochromatic-component-included-in-another} (applied to $R_1$ and $Y$ implies that $R_1$ has at least $\gamma n /4 - \eps^{1/6}n>    2\eps n$ vertices in  $\bitop{Z-Y}$ or $\bibot{Z-Y}$. 
	By~\eqref{equ:dense-R_1captopZ-Y-klein} we have the latter case and hence
	\begin{equation}
		\label{equ:dense-unten-RcapZ-Y-gross}
		\bibot{R_1 \cap (Z-Y)} \text{ and }  \bibot{R_1 \cap (Y-Z)} \text{   each have a size of at least } 2\eps n.
	\end{equation}
	The fact that  $[\bitop{Y-(X \cup Z)},\bibot{R_1\cap (Z-Y)}]$ and $[\bitop{Z-(X\cup Y)},\bibot{R_1 \cap (Y-Z)}]$ only have red edges, together with~\eqref{equ:dense-R'-R''} and~\eqref{equ:dense-unten-RcapZ-Y-gross}, yields that
	\begin{equation}
		\label{equ:dense-ZcapY}
		\bitop{Y  - (X \cup  Z)} \text{ and }  \bitop{Z  - (X \cup  Y)} \text{ each  have less than } \eps n \text{ vertices.}
	\end{equation}
	Now by~\eqref{equ:dense-ZcapY} (and by the existence of $R_1$, $R_2$, $R_3$), we know that $\bitop{(Y \cap Z)-X}$ has at least $\eps n$ vertices. 
	So each vertex of $\bibot{X}$ has a neighbour in $\bitop{(Y \cap Z) -X}$ and hence $\bibot{X} \subset \bibot{Y \cup Z}$. 
	Since, by Claim~\ref{cla:dense-no-2-monochromatic-components-cover-everything}, $H$ is not $(1-\eps^{1/6}-2\eps)$-spanned by $Y \cup Z$, we have that $\bitop{X - (Y \cup Z)}$ has a size of at least $2 \eps n$. 
	This and~\eqref{equ:dense-unten-RcapZ-Y-gross} imply that $[X - (Y \cup Z), \bibot{Y - (X \cup Z)}]$ and $[X - (Y \cup Z), \bibot{Z - (X \cup Y)}]$ are $2 \eps$-\nt{} each. 
	As the edges of these subgraphs are green and blue respectively and as Lemma~\ref{lem:dense-connectivity-and-matchings} applies, there are green and blue components $G$ and $B$ such that $\bibot{H-X -[ (G \cap Y) \cup (B \cap Z)}]$ has a size of less than $\eps  n +\eps^{1/6}n$ by~\eqref{equ:dense-bot-YcapZ-small}. 

	Now let $G'$ be another $\delta$-\nt{} green component.
	Then $\bibot{G'- X}$ has at most  $\eps^{ {1/6}} n$  vertices, while $\bibot{G' \cap X}$ has at least $2 \eps n$ vertices. 
	By~\eqref{equ:dense-ZcapY} it follows that $\bitop{G' -X}$ has at most $\eps n + \eps^{1/3}n$ vertices, while $\bitop{G' \cap X}$ has at least $2 \eps n$ vertices. 
	This is not possible by Claim~\ref{cla:dense-no-monochromatic-component-included-in-another} and  completes the proof.
\end{proof}
Using Claim~\ref{cla:dense-one-has-at-most-3-components} we assume from now on that without loss of generality the colour red has exactly three $\delta$-\nt{} \connected components $\red{R_1},$ $\red{R_2}$ and $\red{R_3}$. 
For $i = 1,2,3$ let $\red{M_i}$ be a red matching of maximum size in $\red{R_i}$. 

None of the red edges in $Y :=H- \red{M_1} -\red{M_2} -\red{M_3}$ is in a red $\delta$-\nt{} component. 
As seen in the proof of Claim~\ref{cla:dense-at-least-3-components}, the number of red edges which are not in $\delta$-\nt{} red components sums up to at most $2\delta n^2.$
Therefore the number of red edges in $Y$ is at most $2\delta n^2$.
Let $Y'$ be the subgraph of $Y$ where these edges have been deleted. 
Note that the edges of $Y'$ are coloured in blue and green. 
Moreover, $H$ is still $(1-3\delta)$-dense after the removal of the red edges of $Y$. 

If $Y'$ is not $(3\delta)^{1/3}$-\nt{}, then we are done as $Y'$ is balanced. 
Otherwise $Y'$ is $(1-(3\delta)^{1/3})$-dense  by Lemma~\ref{lem:dense-subgraph} and thus 
contains a $(1-(3\delta)^{1/6})$-spanning subgraph $Y''$ of $Y$ with $(1-2(3\delta)^{1/6})$-complete degree, by Lemma~\ref{lem:dense-minimum-degree}. 
By removing at most $(3 \delta)^{1/6}n$ vertices from $Y''$ we can assure that $Y''$ is balanced. 
If $Y''$ can be $(1-10 (3\delta)^{1/6})$-spanned by two disjoint monochromatic connected matchings, we are done, since in that case, we found five matchings which together $(1-11 (3\delta)^{1/6})$-span~$H.$ 
Otherwise, as the edges of $Y''$ are green and blue the colouring of $Y''$ is $4(3\delta)^{1/6}$-split in $Y''$, by Lemma~\ref{lem:dense-connected-matchings-for-two-colours}.
We denote its blue and green \connected components by $\blue{B'_1},$ $\blue{B'_2},$ 
respectively $\green{G'_1},$  $\green{G'_2}$, with $\bitop{\blue{B'_1}} = \bitop{\green{G'_1}}$, $\bitop{\blue{B'_2}} = \bitop{\green{G'_2}}$, $\bibot{\blue{B'_1}} = \bibot{\green{G'_2}}$, and $\bibot{\blue{B'_2}} = \bibot{\green{G'_1}}$.

Since $Y''$ is $(1-(3\delta)^{1/6})$-spanning in $Y'$ it is also $(1-(3\delta)^{1/6})$-spanning in~$Y$. 
Therefore the subgraph
\begin{equation}
	\label{equ:dense-M1M2M3-spanning}
	\blue{B'_1} \cup \blue{B'_2} \cup \red{M_1} \cup \red{M_2} \cup \red{M_3} \text{ is } (1-(3\delta)^{1/6})\text{-\nt{} in } H.
\end{equation}
By Lemma~\ref{lem:dense-connected-matchings-for-two-colours}, $Y''$ can be $(1-4(3\delta)^{1/6})$-spanned by two blue matchings $M_4 \subset B'_1$, $M_5 \subset B'_2$ and an additional green matching. 
If any of the matchings $M_i$ has less than $\gamma n$ edges, we can ignore it and still have a sufficiently large cover of $H$. 
Thus we get that
\begin{equation}
	\label{equ:dense-big-guys}
	B'_1,~B'_2,~G'_1,~G'_2,~\red{M_1},~\red{M_2}, \text{ and } \red{M_3} \text{ are $\gamma$-\nt{} in $H$.}
\end{equation}
Moreover, let $\blue{B_1}$ and $\blue{B_2}$ be the blue \connected components in $H$ that contain $\blue{B'_1}$ and $\blue{B'_2}$, respectively.
We define $\green{G_1}$ and $\green{G_2}$  analogously. 
If $\blue{B_1} = B_2$, we are done as $M_4 \cup M_5$ is a connected matching. 
This and symmetry implies
\begin{equation}
	\label{cla:dense-B_1-not-B_2}
	\blue{B_1} \neq \blue{B_2} \text{ and } \green{G_1} \neq \green{G_2}.
\end{equation}
 
\begin{claim}
	\label{cla:dense-something-is-red}
	For each $i = 1,2,3$ we have that 
	\begin{enumerate}[\rm (a)]
		\item \label{itm:dense-something-is-red-a} 
		      \begin{itemize}
		      	\item if $|\bitop{\red{M_i}} \setminus \bitop{\green{G_1} \cup \green{G_2}}| > 6 \eps n$, then 
		      	      $\bibot{\blue{B'_1}} \subset \bibot{\red{R_i}} $ or $\bibot{\blue{B'_2}} \subset  \bibot{\red{R_i}};$      
		      	      		      	      		      	      
		      	\item if $|\bitop{\red{M_i}} \setminus \bitop{\blue{B_1} \cup \blue{B_2}}| > 6 \eps n$, then
		      	      $\bibot{\green{G'_1}} \subset \bibot{\red{R_i}} $ or $\bibot{\green{G'_2}} \subset  \bibot{\red{R_i}};$ 
		      	      		      	      		      	      
		      \end{itemize}         
		\item  \label{itm:dense-something-is-red-b}  
		      \begin{itemize}
		      	\item if $|\bibot{\red{M_i}} \setminus \bibot{\green{G_1} \cup \green{G_2}}| > 6 \eps n$, then 
		      	      $\bitop{\blue{B'_1}} \subset \bitop{\red{R_i}}$ or $\bitop{\blue{B'_2}} \subset  \bitop{\red{R_i}};$ 
		      	\item if $|\bibot{\red{M_i}} \setminus \bibot{\blue{B_1} \cup \blue{B_2}}| > 6 \eps n$, then
		      	      $\bitop{\green{G'_1}} \subset \bitop{\red{R_i}} $ or $\bitop{\green{G'_2}} \subset  \bitop{\red{R_i}};$ 
		      \end{itemize}
		\item  \label{itm:dense-something-is-red-c}  
		      \begin{itemize}
		      	\item if $|\bitop{\red{M_i}} \setminus \bitop{\green{G_1} \cup \green{G_2} \cup  \blue{B_1} \cup  \blue{B_2}}|   > 2 \eps n,$
		      	      then $\bibot{\blue{B'_1} \cup \blue{B'_2}} = \bibot{\green{G'_1} \cup \green{G'_2}} \subset \bibot{ \red{R_i} };$
		      	\item if $|\bibot{\red{M_i}} \setminus \bibot{\green{G_1} \cup \green{G_2} \cup  \blue{B_1} \cup  \blue{B_2}}|   > 2 \eps n,$
		      	      then $\bitop{\blue{B'_1} \cup \blue{B'_2}} = \bitop{\green{G'_1} \cup \green{G'_2}} \subset \bitop{ \red{R_i} }.$
		      \end{itemize}
	\end{enumerate}
\end{claim}
\begin{proof}
	For the first part of (a), assume $|\bitop{\red{M_1}} \setminus \bitop{G_1 \cup G_2}| > 6 \eps n$. 
	Note that there is no green edge between $\bitop{\red{M_1}} \setminus \bitop{G_1 \cup G_2}$  and $\bibot{G'_1}.$ 
	First assume that $\bitop{\red{M_1} \cap B_1} \setminus \bitop{G_1 \cup G_2}$ has a size of at least $2\eps n.$ 
	Then, by \eqref{cla:dense-B_1-not-B_2}, any edge between $\bitop{\red{M_1} \cap B_1} \setminus \bitop{G_1 \cup G_2}$ and $\bibot{B'_2}=\bibot{G_1'}$ is red. 
	So, by Lemma~\ref{lem:dense-connectivity-and-matchings} and~\eqref{equ:dense-big-guys} the result follows.
	So we can assume that this is not true. 
	Similarly, the result holds if $|\bitop{\red{M_1} \cap B_2} \setminus \bitop{G_1 \cup G_2}| \geq 2 \eps n$. 
		Therefore, we can assume that $\bitop{\red{M_1}} \setminus \bitop{B_1 \cup B_2 \cup G_1 \cup G_2}$ has a size of at least $2\eps n$.
		In this case, since all edges between $\bitop{M_1}\setminus \bitop{B_1 \cup B_2 \cup G_1 \cup G_2}$ and $\bibot{B'_1}$ are red, the result follows again by Lemma~\ref{lem:dense-connectivity-and-matchings} and~\eqref{equ:dense-big-guys}.
	Item (b) and the second part of (a) follow similarly.
			 
	For the first part of (\ref{itm:dense-something-is-red-c}), note that any edge between $\bitop{\red{M_i}} \setminus \bitop{\green{G_1} \cup \green{G_2} \cup  \blue{B_1} \cup  \blue{B_2}}$ and $\bibot{\blue{B'_1} \cup \blue{B'_2}} = \bibot{\green{G'_1} \cup \green{G'_2}}$ has to be red and use Lemma~\ref{lem:dense-connectivity-and-matchings} with~\eqref{equ:dense-big-guys}. 
	The second part of (c) is analogous.
\end{proof}
 
By Claim~\ref{cla:dense-at-least-3-components} there are green and blue $\gamma$-\nt{}  components $\green{G_3} \neq G_1,G_2$ and  $\blue{B_3} \neq B_1,B_2$ in $H.$ 
 
\begin{claim}
	\label{cla:dense-blue-and-green-touch}
	It holds that $|V(\green{G_3} \cap \blue{B_3} \cap (M_1 \cup M_2 \cup M_3)) | > 36 \eps n.$ 
\end{claim}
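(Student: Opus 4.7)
The plan is to mirror the exact version (Claim~\ref{cla:blue-and-green-touch}) while tracking error terms carefully, using the robust supporting claims of this section. Suppose for contradiction that $|V(\green{G_3} \cap \blue{B_3} \cap (\red{M_1} \cup \red{M_2} \cup \red{M_3}))| \leq 36\eps n$. Since $\blue{B_3} \neq B_1, B_2$ and $\green{G_3} \neq G_1, G_2$, the components $\blue{B_3}$ and $\green{G_3}$ are disjoint from $\blue{B'_1} \cup \blue{B'_2} = \green{G'_1} \cup \green{G'_2}$. Together with~\eqref{equ:dense-M1M2M3-spanning}, this means that almost all vertices of $\blue{B_3}$ and $\green{G_3}$ sit inside $\red{M_1} \cup \red{M_2} \cup \red{M_3}$, so $\blue{B_3} \cap (\red{M_1} \cup \red{M_2} \cup \red{M_3})$ and $\green{G_3} \cap (\red{M_1} \cup \red{M_2} \cup \red{M_3})$ are each $(\gamma - (3\delta)^{1/6})$-\nt{}, hence still $\gamma/2$-\nt{} by~\eqref{equ:largeenough}.

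I would then locate indices $i, i', j, j'$ and subgraphs $\blue{B''_3} \subset \blue{B_3}$, $\green{G''_3} \subset \green{G_3}$ with $\bitop{B''_3} \subset \bitop{M_i}$, $\bibot{B''_3} \subset \bibot{M_{i'}}$, $\bitop{G''_3} \subset \bitop{M_j}$, $\bibot{G''_3} \subset \bibot{M_{j'}}$, each of size at least (say) $\gamma n / 20$. The separation $i \neq i'$ and $j \neq j'$ is forced by Claim~\ref{cla:dense-no-monochromatic-component-included-in-another}: if $i = i'$, then $\blue{B_3}$ would be $\gamma/4$-\et{} outside $\red{M_i}$, contradicting the claim combined with the fact that $\blue{B_3} \cap \red{M_i}$ is $2\eps$-\nt{}. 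As in the exact proof, an index swap using~\eqref{equ:dense-M1M2M3-spanning} recovers $i \neq i'$.

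The key intermediate step is that for any $k \neq i$, the bipartite graph $[\bitop{B''_3 \cap M_i}, \bibot{G''_3 \cap M_k}]$ has only blue and green edges; by Lemma~\ref{lem:dense-connectivity-and-matchings} a $2\eps$-\nt{} bipartite piece here would lie inside $\green{G_3} \cap \blue{B_3}$, contradicting the standing assumption that this intersection contributes at most $36\eps n$ vertices to $\red{M_1} \cup \red{M_2} \cup \red{M_3}$. This forces $|\bibot{G_3 \cap M_k}| < 2\eps n$ for $k \neq i$, and symmetrically one concludes $j' = i$ and $j = i'$. In particular, all four quantities $|\bibot{\red{M_i} \cap \green{G_3}}|$, $|\bitop{\red{M_j} \cap \green{G_3}}|$, $|\bitop{\red{M_i} \cap \blue{B_3}}|$, $|\bibot{\red{M_j} \cap \blue{B_3}}|$ exceed $6\eps n$, which is exactly the threshold required by Claim~\ref{cla:dense-something-is-red}\eqref{itm:dense-something-is-red-a},\eqref{itm:dense-something-is-red-b}.

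Applying that claim yields, without loss of generality, $\blue{B'_1} \subset \red{R_i}$ and $\blue{B'_2} \subset \red{R_j}$. Letting $k$ be the third index, Claim~\ref{cla:dense-something-is-red} also gives $|\red{R_k} \cap (\green{G_3} \cup \blue{B_3})| < O(\eps n)$. I would then study the green edges between $\bibot{\blue{B'_2}} \subset \bibot{\green{G_1} \cap R_j}$ and $\bitop{\blue{B''_3} \cap R_i}$: these force $\bitop{\blue{B''_3}} \subset \bitop{\green{G_1}}$ up to small error. Transferring this to $\red{R_k}$ via the blue edges to $\bibot{\red{R_k} - \blue{B_3}}$ (which is almost all of $\bibot{R_k}$), and then repeating on the top side with $\bitop{\blue{B'_1}}$ and $\bibot{\blue{B''_3}}$, one obtains that $\red{R_k} \cap \green{G_1}$ is $2\eps$-\nt{} while $\red{R_k} - \green{G_1}$ is $\gamma/4$-\et{}. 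This contradicts Claim~\ref{cla:dense-no-monochromatic-component-included-in-another}, finishing the proof.

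The main obstacle is the quantitative bookkeeping: every use of Lemma~\ref{lem:dense-connectivity-and-matchings} or Claim~\ref{cla:dense-no-monochromatic-component-included-in-another} contributes small exceptional sets, and one must verify (via the hierarchy~\eqref{equ:largeenough}) that after a constant number of such steps the accumulated error remains below the thresholds $2\eps n$, $6\eps n$, $\gamma n$, $\gamma n/4$ invoked throughout. The choice of $36\eps n$ in the statement is precisely large enough to absorb these errors while still small enough (compared to $\gamma$) to drive the contradictions.
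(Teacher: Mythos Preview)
Your proposal is correct and follows essentially the same route as the paper's proof: assume the intersection is small, use~\eqref{equ:dense-M1M2M3-spanning} to locate $\blue{B_3}$ and $\green{G_3}$ inside the red matchings, force $j'=i$ and $j=i'$ via the blue/green edge argument, apply Claim~\ref{cla:dense-something-is-red} to pin $B'_1,B'_2$ into $R_i,R_j$, and then derive that $R_k\cap G_1$ is $2\eps$-\nt{} while $R_k-G_1$ is small, contradicting Claim~\ref{cla:dense-no-monochromatic-component-included-in-another}. The paper uses the threshold $37\eps n$ for the subgraphs $B'_3,G'_3$ (rather than your $\gamma n/20$) so that exactly $\eps n$ survives after subtracting the assumed $36\eps n$, but your larger threshold works just as well given the hierarchy~\eqref{equ:largeenough}.
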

\begin{proof}
	Assume otherwise. 
	That is, assume
	$$ |V(\green{G_3} \cap \blue{B_3} \cap (M_1 \cup M_2 \cup M_3)) | \leq  36 \eps n.$$
	The components $B_3$ and $G_3$ do not meet with $\blue{B_1'} \cup \blue{B_2'} = \green{G_1'} \cup \green{G_2'}$ and by \eqref{equ:dense-M1M2M3-spanning}, there are not more than $2(3\delta)^{1/6} n$ vertices outside of $ \blue{B'_1} \cup \blue{B'_2} \cup \red{M_1} \cup \red{M_2} \cup \red{M_3}$.
	As $\gamma > 2(3\delta)^{1/6} + \delta$ by~\eqref{equ:largeenough}, we conclude that $B_3 \cap (M_1 \cup M_2 \cup M_3)$ and $G_3 \cap (M_1 \cup M_2 \cup M_3)$ are each $\delta$-\nt{}. 
	Hence there are indices $i,i',j,j'$ such that there is a blue $37 \eps$-\nt{} subgraph $\blue{B_3'} \subset \blue{B_3}$ and a green $37 \eps$-\nt{} subgraph $\green{G_3'} \subset \green{G_3}$ such that  $\bitop{\blue{B_3'}} \subset \bitop{\red{M_i}}$ and $\bibot{\blue{B_3'}} \subset \bibot{\red{M_{i'}}}$, and $\bitop{\green{G_3'}} \subset \bitop{\red{M_j}}$ and $\bibot{\green{G_3'}} \subset \bibot{\red{M_{j'}}}$. 
	Actually, we can choose these indices such that $  i \neq i'$ and  $j \neq j'$. 
	Since if $i= i'$, say, Claim~\ref{cla:dense-no-monochromatic-component-included-in-another} yields that $(B_3 \cap H) \setminus M_i$ is not $\gamma/4$-\et and therefore, by  \eqref{equ:largeenough} and \eqref{equ:dense-M1M2M3-spanning}, there is some index $k \neq i$ such that $B_3 \cap M_k$ is not $37 \eps$-\et, which allows us to swap $i'$ for $k$.

	For an index $k \neq i$, the edges between $\bitop{\blue{B_3'} \cap \red{M_i}}$ and $\bibot{\green{G_3'} \cap \red{M_k}}$ are blue and green.
	As by our initial assumption $|V(\green{G_3} \cap \blue{B_3} \cap (M_1 \cup M_2 \cup M_3)) | \leq  36 \eps n,$  this implies that $|\bibot{\green{G_3} \cap  \red{M_k}}| \leq 36 \eps n$. 
	In the same way we obtain that $|\bitop{\green{G_3}  \cap  \red{M_k}}| \leq 36 \eps n$ for $k \neq i'$ or $|\bitop{B_3'\cap M_i}|\leq 36 \eps n$, but the latter cannot happen by the choice of $B'_3$.
	Hence we have $i = j'$ and $i' = j$; in other words, $$|\bibot{\red{M_i} \cap \green{G_3}}| \geq 37  \eps n, ~ |\bitop{\red{M_j} \cap \green{G_3}}| \geq  37  \eps n, ~|\bitop{\red{M_i} \cap \blue{B_3}}| \geq  37 \eps  n  \text{ and }  |\bibot{\red{M_j} \cap \blue{B_3}}| \geq  37  \eps n.$$
			  
	So by Claim~\ref{cla:dense-something-is-red} (a) and (b), either we have $\blue{B'_1} \subset \red{R_i}$ and $\blue{B'_2} \subset \red{R_j}$, or we have 
	$\green{G'_1} \subset \red{R_i}$ and $\green{G'_2} \subset \red{R_j}$.
		Indeed, the fact that $|\bibot{\red{M_i} \cap \green{G_3}}| \geq37\eps n$ together with Claim~\ref{cla:dense-something-is-red} (b) implies that $\bitop{B'_1} = \bitop{G'_1} \subset \bitop{R_i}$ or $\bitop{B'_2} = \bitop{G'_2} \subset \bitop{R_i}$. Without loss of generality, we assume the latter. 
			Next, since $|\bitop{\red{M_i} \cap \green{B_3}}| \geq37\eps n$, and by Claim~\ref{cla:dense-something-is-red} (a), we get that  $\bibot{G'_1} = \bibot{B'_2} \subset \bibot{R_i}$ or $\bibot{G'_2} = \bibot{B'_1} \subset \bibot{R_i}$. Without loss of generality, we assume the former.
			We repeat the same with index $j$, but as we already have $B'_2 \subset R_i$, the output of Claim~\ref{cla:dense-something-is-red} has to be $\bibot{B'_1} = \bibot{G'_2} \subset \bibot{R_j}$ for $|\bitop{\red{M_j} \cap \green{G_3}}| \geq37\eps n$ and $\bitop{B'_1} = \bitop{G'_1} \subset \bitop{R_j}$ for $|\bibot{\red{M_j} \cap \blue{B_3}}| \geq37\eps n $. For the remainder of the proof, let us assume that $\blue{B'_1} \subset \red{R_i}$ and $\blue{B'_2} \subset \red{R_j}$.
	Then $ \green{G'_1} \cap \red{R_k} = \emptyset = \green{G'_2} \cap \red{R_k}$, where $k$ is the third index, which together with Claim~\ref{cla:dense-something-is-red} (a) and (b) gives that $\red{R_k} \cap (\green{G_3} \cup \blue{B_3})$ is $6\eps$-\et.
	The edges between $\bibot{\blue{B'_2}} = \bibot{\green{G'_1}} \subset \bibot{\green{G_1} \cap R_j}$ and $\bitop{\blue{B'_3} \cap R_i}$ have to
	be green, which implies that $\bitop{\blue{B'_3}} \subset \bitop{\green{G_1}}$.
	As any edge between $\bitop{\blue{B'_3}}$ and $\bibot{\red{R_k} -  \blue{B_3}}$ has to be green we deduce that $|\bibot{\red{R_k} \cap \green{G_1} }| \geq 2 \eps n$ since $R_k$ is $\gamma$-\nt{} and $|\bibot{\red{R_k} \cap \blue{B_3} }| \leq 6 \eps n$. This also implies that $|\bibot{R_k - G_1|} \leq 6 \eps n$.

	By repeating the same argument with $\bitop{\blue{B'_1}} = \bitop{\green{G'_1}} \subset \bitop{\green{G_1}}$ and $\bibot{\blue{B'_3}}$, it follows that $|\bitop{\red{R_k} \cap \green{G_1} }| \geq 2 \eps n$ and $|\bitop{R_k - G_1|} \leq 6 \eps n$.
	So $R_k \cap G_1$ is $2 \eps$-\nt{} and $\red{R_k} - \green{G_1}$ is $6\eps$-\et, a contradiction to Claim~\ref{cla:dense-no-monochromatic-component-included-in-another}.
\end{proof}
Claim~\ref{cla:dense-blue-and-green-touch} allows us assume that without loss of generality
\begin{equation}
	\label{equ:dense-1}
	|\bitop{\red{M_3} \cap \green{G_3} \cap \blue{B_3}}| > 6 \eps n.
\end{equation}
 This implies $|\bitop{\red{M_3}} \setminus \bitop{\green{G_1} \cup \green{G_2} \cup  \blue{B_1} \cup  \blue{B_2}}|   > 2\eps$ and thus by Claim~\ref{cla:dense-something-is-red}(\ref{itm:dense-something-is-red-c}) with $i = 3$ we obtain   \begin{equation}
\label{equ:dense-2}
\bibot{\blue{B'_1} \cup \blue{B'_2}} = \bibot{\green{G'_1} \cup \green{G'_2}} \subset \bibot{ \red{R_3} }.
\end{equation} 
This implies that $(\bibot{\red{R_1} \cup R_2}) \cap (\bibot{\green{G'_1} \cup \green{G'_2}}) = \emptyset$.
Since the edges between $\bitop{\red{M_3} \cap \green{G_3} \cap \blue{B_3}}$ and $\bibot{\red{R_1} \cup \red{R_2}}$ are coloured green and blue, we have
by (\ref{equ:dense-1}) and Lemma~\ref{lem:dense-connectivity-and-matchings} that 
\begin{equation}
	\label{equ:dense-3}
	\bibot{\red{M_1} \cup \red{M_2}}  \subset \bibot{\red{R_1} \cup \red{R_2}} \subset \bibot{\green{G_3} \cup \blue{B_3}}.
\end{equation}
So, by~\eqref{equ:dense-big-guys} and Claim~\ref{cla:dense-something-is-red}(\ref{itm:dense-something-is-red-b}) with $i = 1$,  we can assume that without loss of generality
\begin{equation}
{\label{equ:dense-4}
	\bitop{\blue{B'_1}} = \bitop{\green{G'_1}} \subset \bitop{\red{R_1}}},
\end{equation}
and hence, by~\eqref{equ:dense-big-guys} and Claim~\ref{cla:dense-something-is-red}(\ref{itm:dense-something-is-red-b}) with $i = 2$ it follows that
\begin{equation}
	{\label{equ:dense-5}
		\bitop{\blue{B'_2}} = \bitop{\green{G'_2}} \subset \bitop{\red{R_2}}}.
\end{equation}
The last two assertions imply that 
$\bitop{\red{R_3} } \cap \bitop{\green{G'_1} \cup \green{G'_2}} = \emptyset$.
Suppose that there is an $x \in \bitop{\red{R_1} \cup \red{R_2}} \setminus \bitop{\green{G_1} \cup \green{G_2} \cup  \blue{B_1} \cup  \blue{B_2}}$. 
By (\ref{equ:dense-2}), the edges between $x$ and $\bibot{\green{G'_1} \cup \green{G'_2}} = \bibot{{B'_1} \cup {B'_2}}$ are not red, and neither green or blue by choice of $x$. 
As $\green{G'_1}$ and $\green{G'_2}$ are both $\gamma$-\nt{} in $H$ by 
\eqref{equ:dense-big-guys} and $H$ has $(1-\eps)$-complete degree, we obtain a contradiction. 
Hence
\begin{equation}
	\label{equ:dense-6}
	\bitop{\red{M_1} \cup \red{M_2}} \setminus \bitop{\green{G_1} \cup \green{G_2} \cup  \blue{B_1} \cup  \blue{B_2}}   = \emptyset.
\end{equation}
In the same fashion, suppose there is an $x \in (\bibot{\red{M_3}}  \setminus \bibot{\green{G_1} \cup \green{G_2} }) \cup (\bibot{\red{M_3}}  \setminus \bibot{ \blue{B_1} \cup  \blue{B_2}})$. By~\eqref{equ:dense-4} and~\eqref{equ:dense-5}, and  by the choice of $x$, the edges between $x$ and $\bitop{\blue{B'_1}} = \bitop{\green{G'_1}}$ respectively $\bitop{\blue{B'_2}} = \bitop{\green{G'_2}}$ are neither green nor blue. Again, using~\eqref{equ:dense-big-guys} and the $(1-\eps)$-completeness of $H$, we obtain
\begin{equation}
	\label{equ:dense-7}
	\bibot{\red{M_3}}  \setminus \bibot{\green{G_1} \cup \green{G_2} } =  \bibot{\red{M_3}}  \setminus \bibot{ \blue{B_1} \cup  \blue{B_2}} = \emptyset.
\end{equation}
Finally, suppose there is an $x \in \bitop{B_3 \cup G_3} \cap \bitop{M_1 \cup M_2} $. By~\eqref{equ:dense-big-guys} and the $(1-\eps)$-completeness of $H$, $x$ sees vertices in $\bibot{M_3}$. This, however, contradicts~\eqref{equ:dense-7}  and thus
\begin{equation}\label{equ:dense-B3cupG3capM1cupM2empty}
	\bitop{B_3 \cup G_3} \cap \bitop{M_1 \cup M_2} = \emptyset.
\end{equation}
Now let us turn to back the graph $H$, for reasons that will become clear below. Assume that $H$ has a red edge $vw$ outside of $M_1 \cup M_2 \cup M_3$. 
By maximality of the matchings $M_i$, $vw$ is not part of $R_1$, $R_2$ or $R_3$. 
By~\eqref{equ:dense-big-guys}, ~\eqref{equ:dense-4} and~\eqref{equ:dense-5} we have $\bibot{vw} \in G_1 \cap B_2$ or $\bibot{vw} \in G_2 \cap B_1$. 
However, both cases contradict~\eqref{equ:dense-1}. 
This yields
\begin{equation}
	\label{equ:dense-no-red-edges-in-Y}
	V(H) = V(B'_1) \cup V(B'_2) \cup V(M_1) \cup V(M_2) \cup V(M_3).
\end{equation}
Next, we restore the symmetry between the colours.
\begin{claim}
	\label{cla:dense-all-have-exactly-3-components}
	Each colour has exactly three  \connected components.
\end{claim}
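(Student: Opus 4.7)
The plan is to mirror the proof of Claim~\ref{cla:all-have-exactly-3-components} in the robust setting, replacing every ``non-empty'' by ``$\eps$-\nt{}'' and every ``empty'' by ``$\eps$-\et{}'' throughout, and invoking Lemma~\ref{lem:dense-connectivity-and-matchings} each time the exact proof extracts a single edge from completeness of $H$. Since red already has exactly three components $R_1,R_2,R_3$, and blue is symmetric to green, the task reduces to ruling out a green component $G_4\neq G_1,G_2,G_3$, treated in two cases according to the size of $\bibot{G_4}$.

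First I would handle the case $|\bibot{G_4}|\geq\eps n$. All edges from $\bibot{G_4}$ to $\bitop{G'_1\cup G'_2}$ must be red or blue. By~\eqref{cla:dense-B_1-not-B_2} together with Lemma~\ref{lem:dense-connectivity-and-matchings}, no vertex of $\bibot{G_4}$ can have substantial blue edges to both $\bitop{G'_1}$ and $\bitop{G'_2}$ (else $B_1=B_2$); similarly, by~\eqref{equ:dense-4} and~\eqref{equ:dense-5}, no such vertex can have substantial red edges to both (else $R_1=R_2$). The $(1-\eps)$-complete degree of $H$, coupled with the $\gamma$-nontriviality of $G'_1,G'_2$ provided by~\eqref{equ:dense-big-guys}, then forces all but $O(\eps n)$ of $\bibot{G_4}$ to lie inside $\bibot{R_1\cup R_2}\cap\bibot{B_1\cup B_2}$. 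Finally, applying the $(1-\eps)$-complete degree to this residual set together with $\bitop{M_3\cap G_3\cap B_3}$, which has size at least $6\eps n$ by~\eqref{equ:dense-1}, produces an edge that is forbidden in each of the three colours (red by $R_3\neq R_1,R_2$; green by $G_3\neq G_4$; blue by $B_3\neq B_1,B_2$), the desired contradiction.

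Second I would treat the case $|\bibot{G_4}|<\eps n$, so $\bitop{G_4}$ consists of vertices with essentially no green neighbourhood in $H$. If $\bitop{G_4}\setminus\bitop{M_3}$ is $\eps$-\nt, then using~\eqref{equ:dense-2} and the dense version $\bitop{R_3}=\bitop{M_3}$ of~\eqref{equ:R3=M3} (which follows from~\eqref{equ:dense-4} and~\eqref{equ:dense-5}), a large fraction of its edges to $\bibot{G'_1\cup G'_2}$ must be blue, and Lemma~\ref{lem:dense-connectivity-and-matchings} then joins $B_1$ and $B_2$, contradicting~\eqref{cla:dense-B_1-not-B_2}. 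Hence $\bitop{G_4}\subseteq\bitop{M_3}$ up to $\eps n$ vertices, and I would finish as in the exact proof by analysing the edges from $\bitop{G_4}$ to $\bibot{M_1\cup M_2}$: using~\eqref{equ:dense-7},~\eqref{equ:dense-B3cupG3capM1cupM2empty} and~\eqref{equ:dense-no-red-edges-in-Y}, these edges force a large fraction of $\bitop{G_4}$ into $\bitop{B_3}$, and repeating the argument with $\bitop{G_3}\cap\bitop{M_3}$ makes $\bibot{M_1\cup M_2}\cap\bibot{G_3\cap B_3}$ $\eps$-\nt, contradicting~\eqref{equ:dense-4} or~\eqref{equ:dense-5}.

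The main obstacle is the bookkeeping of $\eps$-slacks across the chain of size-quantified deductions: each application of Lemma~\ref{lem:dense-connectivity-and-matchings} or of the $(1-\eps)$-complete degree loses $O(\eps n)$ vertices, so I would need to verify at every step that the surviving set still exceeds the next threshold. The hierarchy $\eps\ll\delta\ll\gamma\ll\rho$ from~\eqref{equ:largeenough} supplies ample room, but pinning down the exact constants at each deduction is the tedious part of the argument.
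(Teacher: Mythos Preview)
Your approach is essentially correct and follows the same two-case structure as the paper, but you miss a key simplification that eliminates all the $\eps$-bookkeeping you identify as the main obstacle. The paper's first move is to invoke~\eqref{equ:dense-no-red-edges-in-Y}, established just before this claim, which says $V(H) = V(B_1') \cup V(B_2') \cup V(M_1) \cup V(M_2) \cup V(M_3)$ \emph{exactly}; hence $Y = Y' = Y''$ and the split structure on $Y$ is exact, not merely approximate. This lets the paper run the case split on $\bibot{G_4} = \emptyset$ versus $\bibot{G_4} \neq \emptyset$ and argue vertex by vertex, exactly as in Claim~\ref{cla:all-have-exactly-3-components}: a single vertex $v \in \bibot{G_4}$ has degree at least $(1-\eps)n$ and so sends edges to both $\bitop{G_1'}$ and $\bitop{G_2'}$ (each of size at least $\gamma n > \eps n$); these edges are red or blue, and since $v$ cannot send blue to both (by~\eqref{cla:dense-B_1-not-B_2}) nor red to both (by~\eqref{equ:dense-4},~\eqref{equ:dense-5}), we get $v \in \bibot{R_1 \cup R_2} \cap \bibot{B_1 \cup B_2}$ with no loss. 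One edge from $v$ to $\bitop{M_3 \cap G_3 \cap B_3}$ then gives the contradiction. No slack accumulates, so the ``tedious part'' you anticipate simply does not arise.

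You also gloss over the red case: from the setup we only know red has exactly three $\delta$-\nt{} components, not three components in total. The paper handles this in one line, again via~\eqref{equ:dense-no-red-edges-in-Y} together with~\eqref{equ:dense-2},~\eqref{equ:dense-4},~\eqref{equ:dense-5}, which pin every vertex of $H$ into $R_1 \cup R_2 \cup R_3$.
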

\begin{proof}
	By~\eqref{equ:dense-no-red-edges-in-Y} there are no red edges in 
	$Y = H - V(M_1 \cup M_2 \cup M_3)$ and hence $Y = Y' = Y''$. 
	By~\eqref{equ:dense-2},~\eqref{equ:dense-4} and~\eqref{equ:dense-5} $R_1$, $R_2$ and $R_3$ are the only red components in $H$.
			  
	Suppose there is a (possibly trivial)  green \connected component $\green{G_4}$ distinct from $\green{G_1},$ $\green{G_2}$ and $\green{G_3}.$   Assume first that $\bibot{\green{G_4}}\neq\emptyset$. 
	Note that any edge between $\bibot{\green{G_4}}$ and $\bitop{G_1' \cup G_2'}$ is red or blue. 
	By \eqref{cla:dense-B_1-not-B_2}, no vertex of $\bibot{\green{G_4}}$ can send blue edges to both $\bitop{G_1'}$ and $\bitop{G_2'}$. 
	Moreover, by \eqref{equ:dense-4} and~\eqref{equ:dense-5}, no vertex of $\bibot{\green{G_4}}$ can send red edges to both $\bitop{G_1'}$ and $\bitop{G_2'}$. 
	Since $H$ has $(1-\eps)$-complete degree and $\overline{G_1'}=\overline{B_1'}$ and $\overline{G_2'}=\overline{B_2'}$ are $\gamma$-\nt{}, we derive $\bibot{G_4} \subseteq \bibot{R_1 \cup R_2} \cap \bibot{B_1 \cup B_2}$. 
	But this contradicts~\eqref{equ:dense-1}, because $H$ is $(1-\eps)$-complete.
			
	Now let us assume that  $\bibot{\green{G_4}}=\emptyset$, and so, $\bitop{\green{G_4}}\neq\emptyset$. 
	In other words, $G_4$ consists of a single vertex with no incident green edges. 
	Suppose that
	$\bitop{G_4}\cap \bitop{ M_3} = \emptyset$. 
	So by~\eqref{equ:dense-big-guys} and \eqref{equ:dense-2},  the edges between $\bitop{\green{G_4}}$ and $\bibot{G_1' \cup G_2'}$ are blue, which contradicts that $B'_1$ and $B'_2$ lie in distinct blue components, as asserted by~\eqref{cla:dense-B_1-not-B_2}.
	Therefore $\bitop{G_4} \subset \bitop{ M_3}$. 
	So as $\bibot{G_4} = \emptyset$, all edges between $\bitop{G_4}$ and $\bibot{M_1 \cup M_2}$ are blue. 
	By~\eqref{equ:dense-7},~\eqref{equ:dense-B3cupG3capM1cupM2empty} and \eqref{equ:dense-no-red-edges-in-Y},  $B_3 \subset [\bibot{M_1 \cup M_2},\bitop{M_3}]$.    
	Since $H$ is $(1-\eps)$-complete and $B_3$ is $\gamma$-\nt{}, we obtain that $\bitop{G_4} \subset \bitop{B_3}$. 
	We also have that $G_3 \subset [\bibot{M_1 \cup M_2},\bitop{M_3}]$ by~\eqref{equ:dense-7},~\eqref{equ:dense-B3cupG3capM1cupM2empty} and \eqref{equ:dense-no-red-edges-in-Y}. 
	Since $G_3$ is $\gamma$-\nt{} it follows that, $\bibot{G_3} \cap \bibot{M_1 \cup M_2}$ has a size of at least $\gamma n$. 
	Since the edges between $\bitop{G_4}$ and $\bibot{G_3}$ are blue, we obtain that $\bibot{M_1 \cup M_2} \cap \bibot{G_3 \cap B_3} \neq \emptyset$.
	But this represents a contradiction to~\eqref{equ:dense-4} or~\eqref{equ:dense-5}, since there is no colour left for the edges between $\bibot{G_3 \cap B_3}$ and $\bitop{B_1' \cup B_2'}$.
	Since a fourth blue component would behave the same way as $G_4$, this finishes the proof of the claim.
\end{proof}
By~\eqref{equ:dense-2}  and \eqref{equ:dense-no-red-edges-in-Y} it follows that  $\bibot{\red{R_i}} = \bibot{\red{M_i}}$ for $i = 1,2$. 
In the same way~\eqref{equ:dense-4},~\eqref{equ:dense-5} and~\eqref{equ:dense-no-red-edges-in-Y} imply that 
\begin{equation}\label{equ:dense-R3=M3}
	\bitop{\red{R_3}} = \bitop{\red{M_3}}.
\end{equation} 
For $1 \leq i,j,k \leq 3$ we denote $\cpt{i}{j}{k} := \bitop{\red{R_i} \cap \green{G_j} \cap \blue{B_k}}$ and $\cp{i}{j}{k} := \bibot{\red{R_i} \cap \green{G_j} \cap \blue{B_k}}.$ 
From~(\ref{equ:dense-big-guys}),~(\ref{equ:dense-1}),~(\ref{equ:dense-4}) and~(\ref{equ:dense-5}) we obtain that
\begin{equation}
	\label{equ:dense-cool}
	|\cpt{1}{1}{1}|, |\cpt{2}{2}{2}|, |\cpt{3}{3}{3}| > 6 \eps n.
\end{equation}

Note that by definition and $(1-\eps)$-completeness it follows that for all $i,i',j,j',k,k'$ with $i \neq i'$, $j \neq j'$ and $k \neq k'$ we have (modulo switching biparts)
	\begin{equation}\label{equ:dense-pw-dif-follows-empty}
	\text{ if $|\cpt{i}{j}{k}| \geq \eps n$, then $|\cp{i'}{j'}{k'}| = 0$.}
	\end{equation}

Let us show that $\cp{i}{j}{k} =\emptyset,$ unless $i,j,k$ are pairwise different. 
	Indeed, otherwise, if say  $\cp{1}{1}{k} \neq \emptyset$  for $k = 1,2$ or $3$, we obtain a contradiction to~\eqref{equ:dense-pw-dif-follows-empty} as $|\cpt{2}{2}{2}|, |\cpt{3}{3}{3}| \geq 6 \eps n$ by~\eqref{equ:dense-cool}.
Then the edges of the graph $[\cp{1}{1}{k}, \cpt{2}{2}{2} \cup \cpt{3}{3}{3}]$ are all blue as $H$ has $(1-\eps)$-complete degree, implying that 2 = k = 3, a contradiction.
Hence $\bibot{H}$ can be decomposed into sets $\cp{i}{j}{k},$ where $1 \leq i,j,k \leq 3$ are pairwise different. 
So we have:
\begin{equation}
	\label{equ:dense-bot-partition}
	\cp{1}{3}{2} \cup \cp{1}{2}{3}  \cup \cp{2}{3}{1}  \cup \cp{2}{1}{3} \cup \cp{3}{2}{1} \cup \cp{3}{1}{2} = \bibot{H}.
\end{equation}
\begin{claim}
	\label{cla:dense-top-partition-prelim}
	We have $\bitop{H}=\cpt{1}{1}{1} \cup \cpt{2}{2}{2} \cup \cpt{3}{3}{3}\cup\cpt{3}{1}{2} \cup \cpt{3}{2}{1} $.
\end{claim}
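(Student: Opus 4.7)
The plan is to follow the structure of the exact proof of Claim~\ref{cla:top-partition-prelim}, adapting each step to the dense setting, but replacing the \emph{non-empty vs. empty} dichotomy of the first phase by a degree-based argument, so as to obtain exact emptiness and not merely $|\cpt{i}{j}{k}|<\eps n$.

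First phase: I will show $\cpt{i}{j}{k} = \emptyset$ whenever exactly two of $i, j, k$ coincide. Suppose for contradiction $v \in \cpt{i}{j}{k}$; for concreteness take $(i,j,k) = (3,1,1)$. Every neighbour of $v$ in $\bibot{H}$ lies in $\bibot{R_3} \cup \bibot{G_1} \cup \bibot{B_1}$, because the colour of an incident edge forces membership in $v$'s monochromatic component in that colour. From~\eqref{equ:dense-bot-partition} one computes $\bibot{R_3} \cup \bibot{G_1} \cup \bibot{B_1} = \cp{3}{1}{2} \cup \cp{3}{2}{1} \cup \cp{2}{1}{3} \cup \cp{2}{3}{1}$, whose complement in $\bibot{H}$ is $\cp{1}{2}{3} \cup \cp{1}{3}{2} = \bibot{R_1}$. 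By Claim~\ref{cla:dense-one-has-at-most-3-components}, $R_1$ is $\delta$-\nt{}, so $|\bibot{R_1}| \geq \delta n > \eps n$ by~\eqref{equ:largeenough}, contradicting that $v$ has at most $\eps n$ non-neighbours in $\bibot{H}$. An analogous computation in each other case shows the ``missing'' set is always $\bibot{X}$ for some $X \in \{R_1,R_2,R_3,G_1,G_2,G_3,B_1,B_2,B_3\}$, and all nine of these are $\gamma$-\nt{} by~\eqref{equ:dense-big-guys} and Claim~\ref{cla:dense-at-least-3-components}, so the same contradiction applies.

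Second phase: I will show $\cpt{i}{j}{k} = \emptyset$ for the permutations $(1,2,3), (1,3,2), (2,1,3), (2,3,1)$ of $(1,2,3)$. From~\eqref{equ:dense-2} and the inclusions $\bibot{B'_1} \subset \bibot{B_1 \cap G_2}$ and $\bibot{B'_2} \subset \bibot{B_2 \cap G_1}$, I obtain $\bibot{B'_1} \subset \cp{3}{2}{1}$ and $\bibot{B'_2} \subset \cp{3}{1}{2}$. Since $B'_1$ and $B'_2$ are $\gamma$-\nt{} by~\eqref{equ:dense-big-guys} and $\gamma \gg \eps$ by~\eqref{equ:largeenough}, both $|\cp{3}{2}{1}|$ and $|\cp{3}{1}{2}|$ are at least $\gamma n > \eps n$. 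Applying~\eqref{equ:dense-pw-dif-follows-empty} in its bottom-to-top form yields $\cpt{i'}{j'}{k'} = \emptyset$ for each $(i',j',k')$ with $i' \neq 3,\ j' \neq 1,\ k' \neq 2$ (which kills $(1,2,3)$ and $(2,3,1)$), and symmetrically for $i' \neq 3,\ j' \neq 2,\ k' \neq 1$ (which kills $(1,3,2)$ and $(2,1,3)$). Combined with the first phase, this proves the claim.

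The main subtlety is the first phase: a verbatim translation of the exact proof would use~\eqref{equ:dense-pw-dif-follows-empty} in place of~\eqref{equ:pw-dif-follows-empty}, but the former only triggers when the hypothesised set has size at least $\eps n$ and would therefore only yield $|\cpt{i}{j}{k}| < \eps n$ rather than $\cpt{i}{j}{k} = \emptyset$. The degree-based argument above circumvents this and delivers exact emptiness; the second phase needs no such replacement because the relevant bottom cells are already of size at least $\gamma n$.
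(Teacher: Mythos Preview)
Your proof is correct and follows essentially the same route as the paper's. Your second phase is identical to the paper's (the paper also extracts $|\cp{3}{1}{2}|,|\cp{3}{2}{1}|\geq\gamma n$ from~\eqref{equ:dense-2} and then applies~\eqref{equ:dense-pw-dif-follows-empty} bottom-to-top; the paper's printed $\cpt{\cdot}{\cdot}{\cdot}$ there is a typo for $\cp{\cdot}{\cdot}{\cdot}$).

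Your concern in the last paragraph is unfounded, however. The paper's first phase does \emph{not} use~\eqref{equ:dense-pw-dif-follows-empty} in the top-to-bottom direction (which would indeed only yield $|\cpt{i}{j}{k}|<\eps n$). It uses the contrapositive of the bottom-to-top direction: from a single vertex in $\cpt{3}{1}{1}$ it concludes $|\cp{1}{2}{3}|,|\cp{1}{3}{2}|<\eps n$, hence $|\bibot{R_1}|<2\eps n$, contradicting $\gamma$-non-triviality. This gives exact emptiness of $\cpt{3}{1}{1}$, just as your degree argument does---in fact your degree argument is precisely the content of~\eqref{equ:dense-pw-dif-follows-empty} unpacked, so the two first-phase arguments are the same in substance.
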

\begin{proof}
	First, we show there is no	$\cpt{i}{j}{k} \neq \emptyset$ such that exactly two of $i,j,k$ are equal.
	If $\cpt{3}{1}{1} \neq \emptyset$, say, then $|\cp{1}{2}{3}| , |\cp{1}{3}{2}| \leq \eps n$ by~\eqref{equ:dense-pw-dif-follows-empty}. 
	Together with~\eqref{equ:dense-bot-partition}, this implies that $R_1$ is not $\gamma$-\nt{}, a contradiction.
	Second, note that (\ref{equ:dense-2}) implies that $\cpt{3}{1}{2}$ and $\cpt{3}{2}{1}$ have each a size of at least $\gamma n$. 
	Again, by~\eqref{equ:dense-pw-dif-follows-empty}, it follows that $\cpt{i}{j}{k} = \emptyset,$ if $i \neq 3$ and $3 \in \{j,k\}$.
	This proves the claim.
\end{proof}

\begin{claim}
	\label{cla:dense-top-partition}
	We have $\bitop{H}= \cpt{1}{1}{1} \cup \cpt{2}{2}{2} \cup \cpt{3}{3}{3}$.
\end{claim}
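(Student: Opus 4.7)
The plan is to mirror the proof of the exact-version counterpart Claim~\ref{cla:top-partition}, but now with $\eps n$-sized fudge factors in all statements about emptiness, using the robust versions of the tools developed earlier (in particular~\eqref{equ:dense-pw-dif-follows-empty} and~\eqref{equ:dense-R3=M3}). By Claim~\ref{cla:dense-top-partition-prelim} it suffices to show that $\cpt{3}{1}{2}$ and $\cpt{3}{2}{1}$ are both smaller than $\eps n$; I will suppose for contradiction that $|\cpt{3}{1}{2}|\geq \eps n$ and derive from there that $H$ can be $(1-\rho)$-spanned by five disjoint monochromatic connected matchings.

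First I would use~\eqref{equ:dense-pw-dif-follows-empty}, applied to a vertex of $\cpt{3}{1}{2}$, to conclude that $|\cp{1}{2}{3}|=|\cp{2}{3}{1}|=0$. If also $|\cpt{3}{2}{1}|\geq\eps n$ then the same lemma yields $|\cp{1}{3}{2}|=0$, and then by Claim~\ref{cla:dense-top-partition-prelim} together with~\eqref{equ:dense-bot-partition} the component $\red{R_1}$ satisfies $\bibot{\rone}=\emptyset$, contradicting that $\rone$ is $\delta$-\nt. So without loss of generality $|\cpt{3}{2}{1}|<\eps n$, leaving essentially $\bitop{H}=\cpt{1}{1}{1}\cup\cpt{2}{2}{2}\cup\cpt{3}{3}{3}\cup\cpt{3}{1}{2}$ up to a set of size less than $\eps n$.

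The key construction is now to discard $M_1$ and $M_2$ and try to span $H$ using $M_3$ together with four other matchings. Take a maximum green matching $\green{M_1^{\mbox{\scriptsize green}}}$ in $\green{G_1}-\red{M_3}$ and a maximum blue matching $\blue{M_2^{\mbox{\scriptsize blue}}}$ in $\blue{B_2}-\red{M_3}-\green{M_1^{\mbox{\scriptsize green}}}$; both are connected since they sit inside $\green{G_1}$ and $\blue{B_2}$ respectively. Define primed versions $\cpt{i}{j}{k}'$ and $\cp{i}{j}{k}'$ by removing the vertices used by $\red{M_3}\cup\green{M_1^{\mbox{\scriptsize green}}}\cup\blue{M_2^{\mbox{\scriptsize blue}}}$. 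Assuming this union is not already $(1-\rho/5)$-spanning (else we are done with $M_4,M_5$), the $(1-\eps)$-complete degree of $H$ together with the maximality conditions yields, up to $O(\eps n)$ error: $|\cpt{3}{1}{2}'|,|\cp{3}{1}{2}'|=O(\eps n)$; $|\cpt{1}{1}{1}'|=O(\eps n)$ or $|\cp{2}{1}{3}'|=O(\eps n)$ (from maximality of $\green{M_1^{\mbox{\scriptsize green}}}\subset\green{G_1}$); $|\cpt{2}{2}{2}'|=O(\eps n)$ or $|\cp{1}{3}{2}'|=O(\eps n)$ (from maximality of $\blue{M_2^{\mbox{\scriptsize blue}}}\subset\blue{B_2}$); and $|\cpt{3}{3}{3}'|=O(\eps n)$ by~\eqref{equ:dense-R3=M3}.

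The endgame is a small case analysis on which of the four alternatives above hold. In each case one picks two additional connected monochromatic matchings to absorb the remaining primed sets: if $|\cpt{1}{1}{1}'|,|\cpt{2}{2}{2}'|=O(\eps n)$ we already have a $(1-\rho)$-cover by three connected matchings; if $|\cp{2}{1}{3}'|,|\cp{1}{3}{2}'|=O(\eps n)$ we use a maximum green matching in $\green{G_2}$ and a maximum blue matching in $\blue{B_1}$ on the leftover vertices (both connected since they live inside the respective components); the two remaining mixed cases are handled symmetrically by absorbing $\cpt{2}{2}{2}'\cup\cp{2}{1}{3}'\cup\cp{3}{2}{1}'$ or $\cpt{1}{1}{1}'\cup\cp{1}{3}{2}'\cup\cp{3}{2}{1}'$ with two further monochromatic connected matchings. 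The main obstacle compared to the exact case is bookkeeping: every step's ``$=\emptyset$'' becomes ``$\leq c\eps n$'' for some absolute constant $c$, and one must verify that the cumulative loss across all five matchings stays below $\rho n$, which follows from the hierarchy $\eps\ll\delta\ll\gamma\ll\rho$ fixed in~\eqref{equ:largeenough}.
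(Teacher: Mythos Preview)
Your overall strategy matches the paper's proof almost exactly: reduce via Claim~\ref{cla:dense-top-partition-prelim}, rule out the case where both $\cpt{3}{1}{2}$ and $\cpt{3}{2}{1}$ survive by forcing $\bibot{R_1}$ to be too small, then in the remaining situation replace $M_1,M_2$ by $\green{M_1^{\mbox{\scriptsize green}}}\subset G_1$ and $\blue{M_2^{\mbox{\scriptsize blue}}}\subset B_2$ and finish by the four-way case analysis. The bookkeeping you describe is also essentially what the paper does.

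There is, however, one genuine slip at the very start. The claim asserts \emph{exact} equality $\bitop{H}=\cpt{1}{1}{1}\cup\cpt{2}{2}{2}\cup\cpt{3}{3}{3}$, so you must show $\cpt{3}{1}{2}=\cpt{3}{2}{1}=\emptyset$, not merely that each has fewer than $\eps n$ vertices. By assuming for contradiction that $|\cpt{3}{1}{2}|\geq\eps n$, you would only conclude $|\cpt{3}{1}{2}|<\eps n$, which does not prove the claim as stated (and the downstream arguments in the paper really use the exact partition $|\bitop H|=X+Y+Z$). The fix is to assume instead $\cpt{3}{1}{2}\neq\emptyset$ and use the \emph{contrapositive} of~\eqref{equ:dense-pw-dif-follows-empty}: from a single vertex in $\cpt{3}{1}{2}$ you get $|\cp{1}{2}{3}|,|\cp{2}{3}{1}|<\eps n$ (rather than $=0$), and likewise a single vertex in $\cpt{3}{2}{1}$ forces $|\cp{1}{3}{2}|<\eps n$, which still makes $|\bibot{R_1}|<2\eps n<\gamma n$ and yields the desired contradiction. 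The same adjustment gives $\cpt{3}{2}{1}=\emptyset$ exactly (not just $<\eps n$), so that the configuration before the matching construction is precisely $\bitop{H}=\cpt{1}{1}{1}\cup\cpt{2}{2}{2}\cup\cpt{3}{3}{3}\cup\cpt{3}{1}{2}$ with no leftover $\eps n$ error on top. After this correction your argument goes through as written.
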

\begin{proof}
	By the previous claim it remains to show that $\cpt{3}{1}{2} = \cpt{3}{2}{1} = \emptyset.$ 
	To this end, suppose that $\cpt{3}{1}{2} \neq \emptyset$ and thus $\cps{1}{2}{3} , \cps{2}{3}{1} \leq \eps n$ by~\eqref{equ:dense-pw-dif-follows-empty}. 
	If $\cpt{3}{2}{1} \neq \emptyset$ as well, then by~\eqref{equ:dense-pw-dif-follows-empty} also $\cps{1}{3}{2} \leq \eps n$ which, by Claim~\ref{cla:dense-top-partition-prelim} and (\ref{equ:dense-bot-partition}) gives the contradiction that $\rone \subset [\cpt{1}{1}{1}, \cp{1}{2}{3} \cup \cp{1}{3}{2}]$ is not $\gamma$-\nt{}.
	So we have  $$\bitop{H} = \cpt{1}{1}{1} \cup \cpt{2}{2}{2} \cup \cpt{3}{3}{3} \cup \cpt{3}{1}{2},$$ with $\cpt{3}{1}{2} \neq \emptyset$.
	This partition is shown in	Figure~\ref{fig:ugly-colouring}.

	Ignoring from now on the matchings $M_1$ and $M_2$, we aim at covering $H$ with $M_3$ and four other matchings. To this end take a green matching $\green{M_1^{\mbox{\scriptsize green}}}$ of maximum size in $\green{G_1}-\red{M_3}$ and next 	a blue matching $\blue{M_2^{\mbox{\scriptsize blue}}}$ of maximum size in $\blue{B_2}-\red{M_3} - \green{M_1^{\mbox{\scriptsize green}}}.$ 
	Denote
	\begin{itemize}
		\item  $\cpt{i}{j}{k}' := \cpt{i}{j}{k} \setminus \bitop{\red{M_3} \cup \green{M_1^{\mbox{\scriptsize green}}} \cup \blue{M_2^{\mbox{\scriptsize blue}}} }$ and
		\item $\cp{i}{j}{k}' := \cp{i}{j}{k} \setminus \bibot{\red{M_3} \cup \green{M_1^{\mbox{\scriptsize green}}} \cup \blue{M_2^{\mbox{\scriptsize blue}}} }.$
	\end{itemize}
	We can assume that $M_3 \cup \green{M_1^{\mbox{\scriptsize green}}} \cup \blue{M_2^{\mbox{\scriptsize blue}}}$ is not $(1-\eps)$-spanning.
	Thus, as $H$ has  $(1-\eps)$-complete degree, the maximality of the matchings $\red{M_3},$  $\green{M_1^{\mbox{\scriptsize green}}}$ and $\blue{M_2^{\mbox{\scriptsize blue}}}$ implies that $\cpt{3}{1}{2}', \cp{3}{1}{2}' = \emptyset$. 

	Moreover it follows that
	\begin{itemize}
		\item $|\cpt{1}{1}{1}'| \leq  \eps n$ or $|\cp{2}{1}{3}'| \leq  \eps n$ by maximality of $\green{M_1^{\mbox{\scriptsize green}}} \subset \green{G_1},$
		\item $|\cpt{2}{2}{2}' |\leq  \eps n$ or $|\cp{1}{3}{2}'| \leq  \eps n$ by maximality of $\blue{M_2^{\mbox{\scriptsize blue}}} \subset \blue{B_2},$
		\item $\cpt{3}{3}{3}' =   \emptyset$ as $\bitop{\red{R_3}} = \bitop{\red{M_3}}$ by~\eqref{equ:dense-R3=M3}.
	\end{itemize}
	If $|\cpt{1}{1}{1}'| , | \cpt{2}{2}{2}'| \leq  \eps n$, then we have found three disjoint connected matchings that $(1-2\eps)$-span $H$, contradicting our assumption. 
	If $|\cp{2}{1}{3}'| ,| \cp{1}{3}{2}'| \leq  \eps n$, we take a green matching in $G_2$ and a blue maximum matching in $B_1$, 	among the yet unmatched vertices. 
	After this step, there are at most $\eps n$ vertices of $\cp{3}{2}{1}'$ left uncovered and therefore all but at most $3\eps n$ vertices of $\bibot{H}$ are covered. 
	Thus, as $H$ is balanced, we have found five disjoint monochromatic connected matchings which together $(1-3\eps)$-span $H$. 
	So, either $|\cpt{2}{2}{2}'| ,|\cp{2}{1}{3}'| \leq \eps n$, or $|\cpt{1}{1}{1}'|,|\cp{1}{3}{2}'| \leq \eps n$.
	In either case we can find two disjoint monochromatic connected matchings that cover all but at most  $2 \eps n$  vertices of the two other 	sets from the previous sentence and all but at most  $ 2\eps n$ vertices  of  $\cp{3}{2}{1}'$. 
	So we have five disjoint monochromatic connected matchings $(1-4\eps)$-spanning $H$, a contradiction. 

\end{proof}

For ease of notation we set
$$X := |\cpt{1}{1}{1}|,~Y:= |\cpt{2}{2}{2}|,~Z:= |\cpt{3}{3}{3}| \ \text{  and}$$
$$A:= |\cp{1}{3}{2}|,~B:= |\cp{1}{2}{3}|,~C:=  |\cp{2}{3}{1}|,~D:=  |\cp{2}{1}{3}|,~E:= |\cp{3}{2}{1}|,~F:= |\cp{3}{1}{2}|.$$  
By Claim~\ref{cla:dense-top-partition} and (\ref{equ:dense-bot-partition}) we have $|\bitop{H}| = X +Y +Z $ and $|\bibot{H}| = A + B + C + D + E+ F$.
Note that the edges between any upper and lower part are monochromatic (see Figure~\ref{fig:nice-colouring}). 
Also note that  we reached complete symmetry between the colours and the indices of the components, so we will from now on again treat them as interchangeable.

Observe that for (at least) one index $i\in\{1,2,3\}$ it holds that $|\bitop{R_i}|\leq |\bibot{R_i}|$.
We shall call such an index $i$ a {\em weak} index for the  colour red. 
If furthermore $|\bitop{R_i}|< |\bibot{R_i \cap B_j}|=|\bibot{R_i \cap  G_k}|$ and $|\bitop{R_i}|< |\bibot{R_i \cap  B_k}|=|\bibot{R_i \cap  G_j}|$, where $j,k$ are the other two indices from $\{1,2,3\}$, then we call $i$ {\em very weak} for colour red.
Analogously define {\em (very) weak} indices for colours blue and red. 

\begin{claim}
	\label{cla:dense-a+b-into-x-gen} If index $i$ is weak for colour $c$, then
	\begin{enumerate}[(a)]
		\item\label{cla:dense-a+b-into-x-gen-a} the indices in $\{1,2,3\}-\{i\}$ are not weak for colour $c$, and
		\item\label{cla:dense-a+b-into-x-gen-b} index $i$ is very weak for colour $c$.
	\end{enumerate}
\end{claim}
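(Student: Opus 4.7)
The plan is to follow the exact-case argument of Claim~\ref{cla:a+b-into-x-gen} very closely, keeping $\eps$-slacks under control at every step. By Claims~\ref{cla:dense-top-partition-prelim} and~\ref{cla:dense-top-partition} we already have the block structure of Figure~\ref{fig:nice-colouring} up to $O(\eps n)$ slack: the bipartite subgraphs $[\cpt{i}{i}{i}, \cp{i'}{j}{k}]$ are $(1-\eps)$-complete in a single colour by the block structure, and the relevant blocks are $\gamma$-non-trivial by~\eqref{equ:dense-cool} and~\eqref{equ:dense-2}. Since $\eps \ll \rho$ by~\eqref{equ:largeenough}, every strict inequality obtained below (such as $X>A+B$ or $Y<C$) will be derived through a contradiction with our standing assumption that $H$ is not $(1-\rho)$-spanned by five monochromatic connected matchings.

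For part (a), fix $i=2$ and colour red, so $Y \leq C+D$. Exactly as in the exact case, $|\bitop{H}|=|\bibot{H}|$ forces that not both $X<A+B$ and $Z<E+F$ can hold, so without loss of generality $Z \geq E+F$. Assume for contradiction that $X \leq A+B$. I would construct three disjoint red connected matchings $N_1 \subset R_1$, $N_2 \subset R_2$, $N_3 \subset R_3$ as follows: $N_1$ is a maximum red matching in $R_1$ saturating $\cpt{1}{1}{1}$ (possible by $X \leq A+B$); $N_2$ saturates $\cpt{2}{2}{2}$ (possible by weakness); and $N_3$ saturates $\cp{3}{2}{1} \cup \cp{3}{1}{2}$ (possible by $Z \geq E+F$). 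Each exists up to $O(\eps n)$ unmatched vertices on the side meant to be saturated, since the corresponding bipartite subgraph of $R_i$ is $(1-O(\eps))$-complete in red. What remains is an essentially balanced bipartite graph between $\cpt{3}{3}{3}$ on top and the unused parts of $\cp{1}{3}{2} \cup \cp{1}{2}{3} \cup \cp{2}{3}{1} \cup \cp{2}{1}{3}$ on the bottom; by the block structure its edges split cleanly into green (to $\cp{1}{3}{2} \cup \cp{2}{3}{1}$, sharing $G_3$) and blue (to $\cp{1}{2}{3} \cup \cp{2}{1}{3}$, sharing $B_3$). Applying Lemma~\ref{lem:dense-connected-matchings-for-two-colours} to this leftover yields two further monochromatic connected matchings covering all but $O(\sqrt{\eps}\, n)$ of its vertices. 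In total, five monochromatic connected matchings $(1-\rho)$-span $H$, a contradiction. Hence $X > A+B$, and the symmetric argument (swapping indices $1$ and $3$) gives $Z > E+F$.

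For part (b), I would prove $Y < C$ and $Y < D$ separately by symmetric arguments; I describe only $Y < C$. Assume for contradiction $Y \geq C$. Take maximum red matchings $N_1 \subset R_1$ and $N_3 \subset R_3$ saturating $\bibot{R_1}$ and $\bibot{R_3}$ (possible since part~(a) gives $X > A+B$ and $Z > E+F$), and a maximum red matching $N_2 \subset R_2$ which, using $C \leq Y \leq C+D$, saturates $\cpt{2}{2}{2}$ while covering all of $\cp{2}{3}{1}$ (each up to $O(\eps n)$ slack). What remains is a balanced bipartite graph with top in $\cpt{1}{1}{1} \cup \cpt{3}{3}{3}$ and bottom in the unused part of $\cp{2}{1}{3}$, whose edges are green between $\cpt{1}{1}{1}$ and $\cp{2}{1}{3}$ (sharing $G_1$) and blue between $\cpt{3}{3}{3}$ and $\cp{2}{1}{3}$ (sharing $B_3$). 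A further application of Lemma~\ref{lem:dense-connected-matchings-for-two-colours} produces two disjoint monochromatic connected matchings covering all but $O(\sqrt{\eps}\, n)$ more vertices, so five matchings $(1-\rho)$-span $H$ -- the desired contradiction.

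The main obstacle is the $\eps$-bookkeeping: every ``saturating'' matching leaves behind $O(\eps n)$ unmatched vertices, every invocation of Lemma~\ref{lem:dense-connected-matchings-for-two-colours} introduces an additional $O(\sqrt{\eps}\, n)$ slack, and the block structure itself only holds up to $O(\eps n)$ exceptions. The hierarchy $\eps \ll \delta \ll \gamma \ll \rho$ from~\eqref{equ:largeenough} ensures that all these errors together amount to far less than $\rho n$, so the five constructed matchings genuinely $(1-\rho)$-span $H$, thereby contradicting our assumption and yielding the claimed strict inequalities.
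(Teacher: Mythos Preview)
Your argument is correct and follows the paper's proof essentially step for step: the same three red matchings, the same leftover structure, and the same completion by one green and one blue connected matching. The only cosmetic difference is that you phrase the final two-matching step as an application of Lemma~\ref{lem:dense-connected-matchings-for-two-colours}, whereas the paper simply says ``cover the remaining vertices with a blue and a green matching''; since the block structure forces each leftover edge to lie in a single green or blue component, the direct argument already suffices and avoids the extra $\sqrt{\eps}$ bookkeeping.
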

\begin{proof}
	Let us show this for $i=2$ and colour red (the other cases are analogous).
	By assumption, $Y \leq C+D$.
	Since $X < A+B$ and $Z < E+F$ cannot both hold, we can assume without loss of generality that $ Z \geq E+F$.
	Now if $X \leq A+B$, then we pick maximal red matchings in $[\cpt{1}{1}{1},\cp{1}{3}{2} \cup \cp{1}{2}{3}]$, $[\cpt{2}{2}{2},\cp{2}{3}{1} \cup  \cp{2}{1}{3}]$ and $[\cp{3}{2}{1} \cup \cp{3}{1}{2},\cpt{3}{3}{3}]$, thus covering all but at most $3\eps n$ vertices of $\cpt{1}{1}{1}\cup \cpt{2}{2}{2} \cup \cp{3}{2}{1} \cup \cp{3}{1}{2}$. 
	To finish we cover all but $4 \eps n $ of the remaining vertices in $\cpt{3}{3}{3} \cup (\bibot{H \setminus R_3})$ with a blue and a green matching, a contradiction.  
	Hence $  X > A+B$. 
	Using this fact, $ Z > E+F$ follows by symmetry.
	This proves (a).
			 
	In order to show (b), let us first prove that $ Y < C$. 
	We pick a maximal red matching in each of $R_1$ and $R_3$, thus covering all but at most $2 \eps n $ vertices of $\bibot{R_1 \cup R_3}$.
	Now if $Y \geq C,$ then all but at most $\eps n$ vertices of $\cp{2}{3}{1}$ are contained in a maximal red matching that also contains all but at most $\eps n$ vertices of $\cpt{2}{2}{2}$.
	We  cover all but $4 \eps n $ of the remaining vertices in $\bitop{R_1 \cup R_3}$ with a blue and a green matching, a contradiction. 
	The fact that $Y<D$ follows analogously.
\end{proof}

Suppose two of the three indices $1,2,3$ are weak for  different colours, say 1 is weak for red and  2 is weak for green. 
Then Claim~\ref{cla:dense-a+b-into-x-gen}\eqref{cla:dense-a+b-into-x-gen-b} gives that $X<A$ and $Y<E$. 
Thus we can match all but at most $\eps n$ vertices of $\cpt{1}{1}{1}$  into $\cp{1}{3}{2}$ and all but at most $\eps n$ vertices of $\cpt{2}{2}{2}$  into $\cp{3}{2}{1}$ with two matchings, one red and one green, and cover all but $6 \eps n $ of the remaining vertices with three disjoint matchings, one from each of $R_3$, $G_3$, $B_3$, a contradiction.

Hence, since each colour has a weak index, there is an index $i$ that is weak for all three colours, $i=2$ say. 
We match  all but at most $\eps n$ vertices of $\cpt{2}{2}{2}$   into $\cp{3}{1}{2}$ with a blue matching $M$. 
Further choose a subset $F \subset \cp{3}{1}{2} \setminus V(M)$ of size $|\cpt{2}{2}{2}| - |V(M) /2| \leq \eps n$, and let  us from now work with the remaining set $\cp{3}{1}{2}' = \cp{3}{1}{2} \setminus (V(M) \cup F)$ of cardinality $F'=F-Y$. 
Set $n'=n-Y$.
(So instead of five we will have to find four monochromatic connected matchings covering all but few vertices of $H-M$.)
Without loss of generality assume $Z\geq X$. 
Claim~\ref{cla:dense-a+b-into-x-gen}\eqref{cla:dense-a+b-into-x-gen-a} gives that
\begin{equation}\label{dense-xxx}
	\text{$X > A+B,C+E,D+F' $ and  $Z > A+C,B+D,E+F'.$}
\end{equation}
Hence $X>n'/3$. 
So, one of the three sums $A+C,B+D,E+F'$ has to be strictly smaller than $X$, say $A+C<X$. 
Consequently, $Z = n'-X<B+D+E+F'$.
 
If $Z\geq D+E+F'$, then we cover all but at most $\eps n$ vertices of $\bibot{R_3 -M}$ with a red matching, and cover all but at most $\eps n$ vertices of the remains of $\cpt{3}{3}{3}$ with a  blue matching that also covers all but at most $\eps n$ vertices of $\cp{2}{1}{3}$. 
Now all that is left on the top is $\cpt{1}{1}{1}$, which we can match  with a red and a blue matching into the remains of $\cp{1}{3}{2} \cup \cp{1}{2}{3}\cup \cp{2}{3}{1}$ (except for $\eps n$ vertices). 
Thus we found four connected matchings that cover all but at most $\gamma n$ vertices of $H-V(M)$, and are done.

So we may assume $Z< D+E+F'$ and thus $X>A+B+C$.
If $X\leq A+B+C+E$, then we can proceed similarly as in the previous paragraph to find four matchings covering all vertices of $H$.
Hence $X> A+B+C+E$, implying that $Z<D+F'$. 
But by~\eqref{dense-xxx} we have $D+F'<X$ a contradiction to our assumption that $X\leq Z$. 
This proves  Lemma~\ref{lem:key-lemma-robust}.

\section{From connected matchings to cycles}
\label{sec:regularity}
In this section we prove Theorem~\ref{thm:LSS}(\ref{itm:LSSa}).
We basically follow the approach of \L uczak~\cite{Luc99}, which has become a standard method in this field.
Therefore we present only an outline of the proof, omitting most of the tedious details that have been discussed in earlier works in more general contexts.
We refer the interested reader to~\cite{BBG+14,DN14,GRSS06,GRSS11,GS09,LRS98}.

For a graph $G$ the bipartite subgraph $H =  [A,B] \subset G$ is \emph{$(\eps, G)$-regular} if
$$X \subset A,~Y\subset B,~|X|> \eps |A|,~|Y| > \eps |B| \text{ imply } |d_G(X,Y) - d_G(A,B)| < \eps .$$
A vertex-partition $ \{ V_0, V_1 , \ldots , V_l \}$ of $l+1$ \emph{clusters} of a graph $G$ is called \emph{$(\eps,G)$-regular}, if 
\begin{enumerate}[\rm (a)]
	\item $|V_1| = |V_2| = \ldots = |V_l|;$
	\item $|V_0| < \eps n;$
	\item apart from at most  $\eps \binom{l}{2}$ exceptional pairs, the graphs $[V_i, V_j]$ are $(\eps, G)$-regular.
\end{enumerate}

\begin{lemma}[Regularity Lemma with prepartition and colours]
	\label{lem:regularity-lemma}
	For every $\eps > 0$ and positive integers $m, r, s \in \mathbb{N}$ there are $m \leq M \in \mathbb{N}$ and $n_0 \in \mathbb{N}$ such that for $n \geq n_0$ 
	the following holds. 
	For any set of mutually edge-disjoint
	graphs $G_1, G_2, \ldots, G_r$ with $V(G_1) = V(G_2) = \ldots = V(G_r) =V$, with $|V| = n$, and any partition $W_1 \cup \ldots \cup W_s = V$, there is a partition $V_0\cup V_1 \cup \ldots \cup V_l$
	of $V$ into $l+1$  clusters such that
	\begin{enumerate}[\rm (a)]
		\item \label{itm:regularity-a} $m \leq l \leq M;$
		\item \label{itm:regularity-b} for each $1 \leq i \leq l$ there is $1 \leq j \leq s$ such that $V_i \subset W_j;$
		\item  \label{itm:regularity-c} $V_0\cup V_1 \cup \ldots \cup V_l$ is $(\eps,G_i)$-regular for  each $1\leq i \leq r.$
	\end{enumerate}
\end{lemma}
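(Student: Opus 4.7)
The plan is to adapt the standard proof of Szemerédi's Regularity Lemma, with two modifications that together handle the multi-colour and the prepartition aspects. For any vertex partition $\mathcal{P}=\{U_1,\dots,U_l\}$ of $V$ and each $1\le i\le r$, define the mean-square density (index)
\[
q_i(\mathcal{P})=\sum_{1\le j<k\le l}\frac{|U_j|\,|U_k|}{n^2}\,d_{G_i}(U_j,U_k)^2,
\]
and let $q(\mathcal{P})=\sum_{i=1}^r q_i(\mathcal{P})$. Because the $G_i$ are edge-disjoint and simple, $0\le q(\mathcal{P})\le r$. As starting partition I would take a common equitable refinement of $W_1,\dots,W_s$: pick an integer $m_0\gg m$ with $s/m_0<\eps$, split each $W_j$ into as many clusters of size exactly $\lfloor n/m_0\rfloor$ as possible, and collect the at most $s\lfloor n/m_0\rfloor<\eps n$ leftover vertices into an exceptional set $V_0$. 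This initial partition satisfies (a) and (b) by construction.

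I would then run the usual iterative refinement loop. At each step, if the current partition $\{V_0,V_1,\dots,V_l\}$ is not $(\eps,G_i)$-regular for some $i$, I apply the defect-form Cauchy--Schwarz argument to $G_i$: for each pair $(V_j,V_k)$ witnessing irregularity, the witnesses $X\subset V_j$, $Y\subset V_k$ define a subdivision of each $V_j$ ($j\ge 1$) into at most $2^l$ atoms, producing a refinement $\mathcal{P}'$ with
\[
q_i(\mathcal{P}')\ge q_i(\mathcal{P})+\eps^5.
\]
Crucially, $q_{i'}(\mathcal{P}')\ge q_{i'}(\mathcal{P})$ for every other colour $i'$, since refining a partition never decreases any index. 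Because the refinement only subdivides existing clusters, every new cluster stays inside the $W_j$ containing its parent, so property (b) is preserved throughout the whole procedure. As $q(\mathcal{P})\le r$ throughout, the loop terminates after at most $r/\eps^5$ steps, giving a bound $M=M(\eps,m,r,s)$ (tower-type in $\eps$, with top floor controlled by $r$ and $s$) on the final number of clusters. A final equalising step that moves at most one vertex per cluster into $V_0$ restores (a) while keeping $|V_0|<\eps n$, provided $m_0$ was chosen large enough.

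The main obstacle is really just bookkeeping: one must check that the standard refinement step can be performed simultaneously for all $r$ colours while respecting the prepartition. The multi-colour aspect is handled by summing the indices, since any single colour that still fails regularity gives the required energy boost to the total $q$; the prepartition aspect is handled automatically because we never merge clusters, only split them, and any split of a cluster stays inside whichever $W_j$ contained that cluster. Thus the whole procedure reduces to the classical proof with essentially no new ideas, only careful parameter tracking.
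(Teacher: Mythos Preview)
The paper does not actually prove this lemma; it is stated as a known variant of Szemer\'edi's Regularity Lemma (with reference to~\cite{Sze78}) and then used as a black box in Section~\ref{sec:regularity}. So there is no ``paper's own proof'' to compare against.

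Your sketch is the standard energy-increment argument, correctly adapted. The two key observations---that summing the mean-square densities over the $r$ colours gives a potential bounded by $r$ while any single irregular colour still yields an $\eps^5$ gain, and that refinement only splits clusters and hence never crosses the prepartition $W_1,\dots,W_s$---are exactly what makes the multi-colour/prepartition version go through with no new ideas.

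One minor inaccuracy: the final equalising step does not in general move ``at most one vertex per cluster'' into $V_0$. After the refinement loop the non-exceptional clusters can have quite different sizes; the usual fix is to chop each of them into pieces of some common size and throw all remainders into $V_0$, which may be many vertices per cluster but still totals less than $\eps n$ provided the number of clusters is large compared to $1/\eps$ (this is why one takes $m_0$ large). This is entirely routine and does not affect the validity of your argument.
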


Let us now prove Theorem~\ref{thm:LSS}(\ref{itm:LSSa}).
Let $ n \gg 0$ and $0 <  \eps \ll 1$.
Let the edges of $K_{n,n}$ with biparts $W_1$ and $W_2$ be coloured in red, green and blue. 
We denote by $G_1$, $G_2$ and $G_3$ the graphs induced by the edges of each of the colours. 

For $m \gg 0$ and $\eps \ll d  \ll  0$, Lemma~\ref{lem:regularity-lemma} provides a vertex-partion $V_0,V_1, \ldots, V_{l}$ of $\kn$ satisfying Lemma~\ref{lem:regularity-lemma}(\ref{itm:regularity-a})--(\ref{itm:regularity-c}) for some $M \geq m$.
As usual, we define the $(\eps,d)$-reduced graph $\Gamma$ by identifying a new vertex $v_i$ with each cluster $V_i$ for $1 \leq i \leq l$.
For $1 \leq i,j \leq l$ and  $1 \leq q  \leq 3$  we  place an edge of colour $q$ between two vertices $v_i$,  $v_j$ if the subgraph  $[V_i,V_j]$ of the respective clusters has edge-density at least $d$ in $G_q$ and is  $(\eps,G_q)$-regular. 
{To get a simple graph, we keep an arbitrary edge from each multi-edge.}

Since the clusters have the same size, we can, if necessary, remove some of them to obtain  a balanced bipartite $(1-2\eps)$-complete subgraph of $\Gamma$, which we will continue to call $\Gamma$.
Therefore Lemma~\ref{lem:key-lemma-robust} can be used to cover all but at most $\rho |V(\Gamma)|$ vertices of $\Gamma$ by five vertex-disjoint monochromatic connected matchings $M_1, \ldots, M_5$.
We finish the proof by turning these five matchings into  monochromatic cycles of $K_{n,n}$ using the following lemma from~\cite{BBG+14,DN14,GRSS06,GRSS11,GS09,LRS98}.

\begin{lemma}
	Let $0 < \eps \ll \rho \ll d  \leq 1$ and let $\Gamma$ be the $(\eps,d)$-reduced graph of $G_1, G_2, \ldots, G_r$, obtained from Lemma~\ref{lem:regularity-lemma}. Assume that there is a set of disjoint monochromatic connected matchings $\mathcal{M}$ in $\Gamma$. 
	Let $U \subset V(G)$ be the set of vertices, which are in clusters associated to the vertices of $V(M)$.
	{Then there are $|\mathcal{M}|$ monochromatic cycles in $G$ partitioning all but $(1-\rho)|U|$ vertices of $U$.}
\end{lemma}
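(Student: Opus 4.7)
The plan is to handle each monochromatic connected matching $M \in \mathcal{M}$ separately and turn it into a single monochromatic cycle in $G$ that covers almost all vertices of the associated clusters. Fix such an $M$, say red for concreteness, and let $C$ be the red component of $\Gamma$ that contains $M$. For each edge $v_iv_j \in M$, the pair $(V_i, V_j)$ is $(\eps, G_1)$-regular with density at least $d$ in the red graph. By discarding a small proportion of low-degree vertices from each cluster, one upgrades this to a super-regular pair while losing only $O(\eps)|V_i|$ vertices per cluster. The Blow-up Lemma (or a direct greedy argument exploiting regularity) then yields a Hamilton path in the pair whose endpoints may be prescribed within any two linear-sized candidate sets on the appropriate sides.

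Next I would stitch these Hamilton paths into a single cycle using the connectedness of $C$ in $\Gamma$. Between any two matching edges $v_iv_j, v_{i'}v_{j'}$ of $M$ there is a red path in $C$, and each edge of that path corresponds to an $(\eps, G_1)$-regular pair in $G$ of density at least $d$. In each such pair one can greedily route a short red path using only a constant number of vertices per intermediate cluster, and this short path can be attached at the endpoints prescribed above. Iterating along a spanning red subtree of $C$ (or along a closed walk visiting every matching edge) closes the Hamilton paths obtained from the matching edges into a single red cycle, while the connecting segments together occupy at most $O(\eps)|V_i|$ vertices in total.

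The uncovered vertices inside $U$ therefore come from three sources: the vertices removed to make pairs super-regular, the few vertices skipped by the Blow-up Lemma when fixing endpoints, and the vertices of intermediate clusters used for connecting segments. Each of these contributes at most an $O(\eps)$-fraction of $|U|$, so taking $\eps$ sufficiently small compared with $\rho$ ensures coverage of at least $(1-\rho)|U|$. The construction is carried out independently for each $M \in \mathcal{M}$; since the matchings are vertex-disjoint in $\Gamma$, their associated cluster sets are disjoint in $G$, and the resulting cycles are pairwise vertex-disjoint as required.

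The main obstacle I expect is the coordination between the endpoint-prescribed Hamilton paths and the connecting walks: one must guarantee that at the moment a short connecting path enters a cluster $V_i$ incident to a matching edge, there is still a linear-sized set of unused vertices on the correct side of $V_i$ available to serve as an endpoint of the Hamilton path. This is arranged in the standard references by choosing candidate endpoint sets to be a fixed linear fraction of each cluster, reserving a tiny buffer per intermediate cluster, and invoking super-regularity to guarantee that many of the candidate endpoints admit a valid extension. Since this machinery is entirely standard and fully worked out in the references \cite{BBG+14,DN14,GRSS06,GRSS11,GS09,LRS98}, I would invoke it as a black box rather than reproduce it here.
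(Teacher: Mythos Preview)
The paper does not prove this lemma; it simply states it and attributes it to the references \cite{BBG+14,DN14,GRSS06,GRSS11,GS09,LRS98}, treating it as a black box from \L uczak's method. Your sketch is the standard argument found in those references, so in spirit you are doing exactly what the paper intends.

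One point in your write-up is not quite right, though. You argue that the resulting cycles are pairwise vertex-disjoint because the matchings in $\mathcal{M}$ are vertex-disjoint in $\Gamma$, so their associated cluster sets are disjoint in $G$. But your connecting segments pass through \emph{intermediate} clusters of the ambient monochromatic component $C$, and those clusters may well be vertices of a different matching $M'\in\mathcal{M}$. So the constructions for the various $M$ cannot be carried out fully independently: you must first route all the short connecting paths (for all matchings), remove the $O(1)$ vertices used per cluster, and only then invoke super-regularity and the Blow-up Lemma on the remaining (still almost full) clusters. This is exactly the ``tiny buffer'' you mention, but it is what guarantees disjointness, not the disjointness of the matchings in $\Gamma$. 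The standard references handle this ordering explicitly; since you are already invoking them as a black box, the gap is cosmetic rather than substantive.
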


\section{Covering all vertices}
\label{sec:covering-by-18-cycles}

\subsection{Preliminaries}

We call a balanced bipartite subgraph $H$ of a $2n$-vertex graph {$(1-\eps)$-\emph{Hamiltonian}}, if any balanced bipartite subgraph of $H$ with at least
$2(1-\eps) n$ vertices  is Hamiltonian. 
The next lemma is a combination of results from~\cite{Hax97,PRR02}.

\begin{lemma}
	\label{lem:eps-hamiltonian}
	For any $1 >\gamma > 0,$ there is an $n_0 \in
	\mathbb{N}$ such that any
	balanced bipartite graph on $2n \geq 2n_0$ vertices and of
	edge density at least $\gamma$ has a {$(1-\gamma/4)$-Hamiltonian} subgraph
	of size at least $\gamma^{3024/\gamma} n/3$.
\end{lemma}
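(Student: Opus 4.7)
The idea is to extract from $G$ a large balanced bipartite subgraph $H$ on biparts of size $m$ in which every vertex has very high degree — specifically, at least $(1-\gamma/8)m$ — and then to deduce $\gamma/4$-Hamiltonicity from the bipartite version of Dirac's theorem.

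For the easy end of the argument, given such an $H$, take any balanced bipartite subgraph $H'$ of $H$ with biparts of size $m' \geq (1-\gamma/4)m$. Each vertex of $H'$ has lost at most $m - m' \leq \gamma m/4$ neighbours, so
\[
\delta(H') \;\geq\; (1 - \gamma/8)m - \gamma m/4 \;=\; (1 - 3\gamma/8)\, m \;>\; m'/2,
\]
the last inequality holding because $1 - 3\gamma/8 > (1-\gamma/4)/2 = 1/2 - \gamma/8$ whenever $\gamma<2$. By the Moon--Moser theorem on bipartite Hamiltonicity, $H'$ has a Hamilton cycle, so $H$ is indeed $\gamma/4$-Hamiltonian.

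The substantive part is producing $H$. Starting from $G$ of density $\gamma$, I would iteratively refine, as in~\cite{Hax97,PRR02}. One natural implementation uses a dependent-random-choice-style step: sample a small set $S$ on one side, pass to its common neighbourhood $N(S)$ on the other side (which has expected size at least a polynomial-in-$\gamma$ fraction of that side, by convexity), then discard vertices of low degree into $N(S)$ on the first side. Each round either boosts the ratio ``minimum-degree divided by bipart-size'' by a definite amount or can be iterated on a proportionally smaller but still dense subgraph. After $O(1/\gamma)$ rounds, the ratio surpasses $1 - \gamma/8$; rebalancing the two biparts at each step and discarding only a controlled fraction of vertices gives an accumulated size loss of the form $\gamma^{c/\gamma} n$, which a careful bookkeeping pins down with the explicit constant $3024$.

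The main obstacle is the quantitative control through these iterations: every round must preserve a large enough fraction of vertices while making measurable progress toward the high-min-degree target, and the total number of rounds must stay at $O(1/\gamma)$ in order to keep the final subgraph of size polynomial in $n$. Using the Szemer\'edi regularity lemma at this point would yield tower-type dependence on $\gamma$ and ruin the stated bound, so one has to rely instead on the quantitatively sharper but more elementary tools from~\cite{Hax97,PRR02}.
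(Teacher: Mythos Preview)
Your proposal has a genuine gap in the ``substantive part'': the target minimum-degree condition $\delta(H)\geq (1-\gamma/8)m$ is simply unattainable in a subgraph of linear size when $\gamma$ is small. Consider the random bipartite graph $G(n,n,\gamma)$. With high probability it has density $\gamma$, yet by standard concentration every balanced bipartite subgraph on parts of size $m\geq c(\gamma)n$ has density $\gamma+o(1)$; in particular its minimum degree is at most $(\gamma+o(1))m$, which is far below $(1-\gamma/8)m$ once $\gamma<8/9$. No amount of iteration can ``boost the ratio minimum-degree divided by bipart-size'' past the global density in a linear-sized piece of a random graph, so your Moon--Moser route cannot reach the stated bound $\gamma^{3024/\gamma}n/3$.

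The paper proceeds differently precisely to avoid this obstruction. Instead of aiming for near-complete minimum degree, it invokes Theorem~\ref{thm:PRR02} (Peng--R\"odl--Ruci\'nski) to find, with $\eps=\gamma/253$, a balanced $(\eps,\gamma/2)$-dense subgraph of size at least $\gamma^{12/\eps}n/2$; a clean-up yields a $(2\eps,\gamma/3)$-\emph{uniform} subgraph $[X,Y]$, where ``uniform'' means only that the minimum degree is at least $(\gamma/3)|X|$ \emph{and} that every $2\eps$-non-trivial bipartite pair spans an edge. After deleting any $\gamma/4$-fraction from each side one still has $(3\eps,\gamma/12)$-uniformity, and since $\gamma/12>7\cdot 3\eps$, Haxell's Theorem~\ref{thm:Hax97} gives Hamiltonicity. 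The point is that Haxell's criterion needs minimum degree only a fixed multiple of the hole-threshold~$\eps$, not anything close to $m/2$; the pseudorandomness (no large empty pair) does the work that a Dirac-type condition cannot do here.
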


We make no attempt to optimise the bounds
in Lemma~\ref{lem:eps-hamiltonian}. 
For the proof, we need some definitions and tools.
For a graph $G$, and disjoint $A, B \subset V(G)$ let $e(A,B)$ denote the number of edges in $[A,B].$ For 
$0 < \eps, \sigma < 1$, $[A,B]$ is called \emph{$(\eps,\sigma)$-dense} if $e(X,Y) \geq \sigma |A||B|$ for every $X \subset A$, $Y \subset B$ with
$|X| \geq \eps |A|$ and and $|Y| \geq \eps |B|$. 

\begin{theorem}[Peng et.~al~\cite{PRR02}]
	\label{thm:PRR02}
	Given a bipartite balanced graph of size $2n$ and edge density $0 < \gamma <1$. 
	Then for all $0 < \eps < 1$ there is an $(\eps, \gamma/2)$-dense balanced subgraph
	on at least $\gamma^{12/\eps} n /2$ vertices.
\end{theorem}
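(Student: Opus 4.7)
The proof is by an iterative ``cleaning'' argument. We build a nested sequence of balanced bipartite subgraphs $[A_0, B_0] \supseteq [A_1, B_1] \supseteq \ldots$ starting from $[A_0, B_0]=[A,B]$, and at each step either certify that the current subgraph already satisfies the $(\eps,\gamma/2)$-density condition (in which case we stop) or exploit a witness of failure to pass to a suitable smaller balanced subgraph. The invariant we maintain throughout is that $e(A_i,B_i)\geq \gamma|A_i||B_i|$, so the starting density is preserved down the chain.

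At step $i$, if $[A_i,B_i]$ is not $(\eps,\gamma/2)$-dense then by definition there are subsets $X\subseteq A_i$, $Y\subseteq B_i$ with $|X|\geq \eps|A_i|$, $|Y|\geq \eps|B_i|$, and $e(X,Y)<(\gamma/2)|A_i||B_i|$. Combined with the density invariant, the three complementary ``quadrants'' $[X,B_i\setminus Y]$, $[A_i\setminus X,Y]$ and $[A_i\setminus X,B_i\setminus Y]$ together contain more than $(\gamma/2)|A_i||B_i|$ edges. By pigeonhole, at least one of these three quadrants — say $[A^*,B^*]$ — is sufficiently dense within its own bipartition to restore the invariant after passing to a balanced bipartite subgraph of $[A^*,B^*]$ (losing at most a factor $2$ in each part when we rebalance). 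We take this balanced subgraph as $[A_{i+1},B_{i+1}]$.

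To bound the number of iterations, one designs a potential $\phi_i$ that incorporates both the bipart sizes and a measure of how ``concentrated'' the edges are: since each witness $(X,Y)$ has both sides of relative size at least $\eps$, each iteration forces a step of size $\Omega(\eps)$ in $\phi$, so $\phi$ bottoms out after $O(1/\eps)$ rounds. Each iteration shrinks the bipart sizes by at most a factor bounded by a fixed power of $\gamma$; multiplying over the $O(1/\eps)$ rounds and accounting for the rebalancing loss gives a final balanced subgraph of size at least $\gamma^{12/\eps}n/2$.

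The main obstacle is the quadrant-selection step: choosing carelessly can either produce a highly unbalanced subgraph (forcing a large rebalancing loss) or allow the density witness to be essentially entire biparts ($|X|\approx|A_i|$ or $|Y|\approx|B_i|$), which degenerates the quadrant argument. Handling these cases requires a refined pigeonhole that, depending on the relative sizes of $X$ and $Y$, chooses either to retain the ``large side'' quadrant or to pass instead to a denser ``small side'' quadrant; the careful bookkeeping needed to extract the explicit exponent $12/\eps$ rather than a looser $O(\log(1/\gamma)/\eps^2)$ is the most delicate part of the argument.
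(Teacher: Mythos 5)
The paper does not prove Theorem~\ref{thm:PRR02} at all: it is imported verbatim from Peng, R\"odl and Ruci\'nski~\cite{PRR02}, so there is no internal proof to compare your attempt against. Judged on its own, your outline has the right general shape --- an iterated density-increment (``cleaning'') argument, which is indeed how such statements are proved --- but it omits the one idea that the theorem is actually about.

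The entire quantitative content of the statement is the exponent $12/\eps$, i.e.\ that the iteration halts after $O(1/\eps)$ rounds while each round costs only a fixed power of $\gamma$ in the part sizes. You assert that ``each iteration forces a step of size $\Omega(\eps)$'' in a potential $\phi$ that you never define, and the only concrete mechanism you do describe --- the pigeonhole over the three quadrants complementary to the witness rectangle $[X,Y]$ --- yields only an $\Omega(\eps^2)$ density increment per round: the rectangle $[X,Y]$ need occupy only an $\eps^2$ fraction of the area $|A_i||B_i|$, so discarding it raises the average density of the remainder by a factor of roughly $(1-\eps^2/2)/(1-\eps^2)\approx 1+\eps^2/2$. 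Since the density starts at $\gamma$ and is bounded above by $1$, this gives $O(\log(1/\gamma)/\eps^2)$ rounds and hence an exponent of order $\log(1/\gamma)/\eps^2$ --- precisely the ``looser'' bound you concede in your final paragraph and promise to repair by ``careful bookkeeping''. That bookkeeping \emph{is} the proof; as written, your argument establishes a strictly weaker theorem. Two further problems: (i) your stated invariant, that the density stays at least $\gamma$, does not by itself force termination --- you need a quantity that strictly increases by a definite amount each round; (ii) the rebalancing loss is not ``at most a factor $2$'': an off-diagonal quadrant such as $[X, B_i\setminus Y]$ can have side ratio $\eps:(1-\eps)$, so balancing it costs a factor of $\eps$, and one must also justify (say, by averaging over sub-pairs) that a balanced sub-pair inherits the density bound. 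To get the genuine $1/\eps$ behaviour one should use a one-sided increment step --- e.g.\ if some $X\subseteq A_i$ with $|X|\ge\eps|A_i|$ satisfies $e(X,B_i)<\tfrac{d}{2}|X||B_i|$, then $[A_i\setminus X,B_i]$ has density at least $d(1+\eps/2)$ --- where the discarded witness occupies an $\eps$ (not $\eps^2$) fraction of what is removed; this is the route taken in~\cite{PRR02}.
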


For $0 < \eps, \delta < 1$, we say that the balanced subgraph $H=[A,B]$  is $(\eps, \delta)$-\emph{uniform} in $G$, if it has minimum degree at least $\delta |A|$, and any $\eps$-\nt{} subgraph
of $H$ has at least one edge.
The next result, due to Haxell, shows that sufficiently strong uniformity implies hamiltonicity.

\begin{theorem}[Haxell~\cite{Hax97}]
	\label{thm:Hax97}
	Let $\eps > 0$ be given, and suppose that $H =[A,B]$ is a bipartite graph with $|A| = |B| \geq \frac{1}{\eps}$ such that $H$ is $(\eps, \delta)$-uniform 
	for $\delta > 7 \eps$. 
	Then $H$ is Hamiltonian.
\end{theorem}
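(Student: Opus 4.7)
The plan is to prove Haxell's theorem by the P\'osa rotation-extension method adapted to bipartite graphs: first produce a nearly spanning longest path by extremal arguments, enlarge the sets of possible endpoints via rotations, use the uniformity condition to close the path into a cycle covering most vertices, and finally absorb any remaining vertices to obtain a Hamiltonian cycle.

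Concretely, let $P=v_1 v_2 \cdots v_k$ be a longest path in $H$, with $v_1\in A$ and $v_k \in B$ (so $k$ is even). The minimum degree condition forces all neighbours of $v_1$ and $v_k$ to lie on $P$, and a standard extension argument using uniformity shows that $V(P)$ misses fewer than $\eps|A|$ vertices in each bipart. Indeed, if both $A\setminus V(P)$ and $B\setminus V(P)$ had size at least $\eps|A|$, $(\eps,\delta)$-uniformity would supply an edge $uv$ with $u,v\notin V(P)$, and attaching $uv$ to $P$ via neighbours of the endpoints (available by the degree condition) would contradict the maximality of~$P$.

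Next, apply P\'osa rotations at the endpoint $v_1$: for each neighbour $v_j$ of $v_1$ (necessarily with $j$ even), the path $v_{j-1}v_{j-2}\cdots v_1 v_j v_{j+1}\cdots v_k$ is another longest path with new left endpoint $v_{j-1}\in A$. Iterating produces a set $S_A\subseteq A$ of possible left endpoints of longest paths whose right endpoint is $v_k$, and, by a symmetric rotation at $v_k$ starting from a path with left endpoint $s\in S_A$, a set $S_B(s)\subseteq B$ of possible right endpoints. A standard counting shows that each rotation which fails to enlarge $S_A$ forces a neighbour of $v_1$ to land in a small "forbidden" set of blocked positions, so the minimum degree hypothesis $\delta|A|>7\eps|A|$ (which budgets for the off-path vertices, the parity-lost indices, and the endpoint-adjacent positions) forces $|S_A|>\eps|A|$, and analogously $|S_B(s)|>\eps|A|$ for some $s$. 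By $(\eps,\delta)$-uniformity, there is an edge between $S_A$ and $S_B(s)$, and this edge closes the corresponding rotated path into a cycle $C$ spanning $V(P)$.

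To finish, let $R=V(H)\setminus V(C)$; by the first step $|R|<2\eps|A|$. For any $u\in R$, the minimum degree gives at least $\delta|A|$ neighbours of $u$ on $C$, and cutting $C$ at a well-chosen edge near such a neighbour produces a path $P'$ on $V(C)\cup\{u\}$ to which the rotation-closing argument applies again, giving a cycle covering $V(C)\cup\{u\}$. Iterating absorbs all of $R$ and yields a Hamiltonian cycle. The main obstacle will be the bipartite bookkeeping in the rotation step: in a bipartite graph each rotation at $v_1$ only produces new candidates inside $A$, and symmetrically for $v_k$, so one must control $S_A$ and $S_B$ separately rather than in one combined set as in the classical P\'osa argument; the factor $7$ in $\delta>7\eps$ exists precisely to provide enough rotation budget in each bipart, since the counting of bad positions accumulates several terms of size $\eps|A|$.
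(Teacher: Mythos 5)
This theorem is not proved in the paper at all: it is imported verbatim from Haxell \cite{Hax97} and used as a black box in the proof of the $\eps$-Hamiltonian lemma, so there is no in-paper argument to compare yours against. Judged on its own, your rotation--extension sketch has a genuine gap at its central step, the closing of the path into a cycle. Uniformity applied to $S_A$ and $S_B(s)$ only guarantees \emph{some} edge $s't$ with $s'\in S_A$ and $t\in S_B(s)$, with no control over which vertices it touches. But $t$ is only known to be the right endpoint of a longest path whose left endpoint is the particular vertex $s$, while $s'$ is only known to be the left endpoint of a longest path whose right endpoint is $v_k$; unless $s'=s$, no longest path with endpoint pair $\{s',t\}$ has been exhibited, so the edge closes nothing. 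The uniformity hypothesis cannot force an edge at a single prescribed vertex $s$ (it needs sets of size $\eps|A|$ on both sides), and a degree count does not rescue this either, since $|N(s)|+|S_B(s)|=O(\eps)|B|\ll|B|$, so $N(s)\cap S_B(s)$ may well be empty. This pairing problem is exactly the delicate point of the rotation method in the ``two large sets span an edge'' setting, and the proposal does not engage with it.

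Two further steps fail as written. First, an edge $uv$ with both ends off the longest path $P$ does not contradict the maximality of $P$: all neighbours of the endpoints of $P$ already lie on $P$, so there is no way to ``attach'' $uv$; the near-spanning property of $P$ must instead be derived \emph{after} the rotation step, from an edge between the large endpoint set $S_A$ (each of whose vertices heads a longest path) and $B\setminus V(P)$. Second, the absorption step is impossible in a bipartite graph: every cycle meets $A$ and $B$ in equally many vertices, so there is no cycle whatsoever on $V(C)\cup\{u\}$ for a single extra vertex $u$; uncovered vertices would have to be absorbed in balanced $A$--$B$ pairs, which requires a different insertion mechanism. (You also tacitly assume the longest path has one endpoint in each bipart, which needs justification, and the claim ``$|S_B(s)|>\eps|A|$ for some $s$'' should be ``for every $s$''.) The theorem is of course true, but a correct proof needs a different device for producing the cycle --- for instance, one can get a nearly spanning cycle directly from a chord between the two end-segments of $P$ of length $2\lceil\eps|A|\rceil$ supplied by uniformity --- and a separate, parity-respecting argument to upgrade it to a Hamiltonian cycle.
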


\begin{proof}[Proof of Lemma~\ref{lem:eps-hamiltonian}]
	Set $\eps := \gamma/253$ and $n_0 := 2\gamma^{-12/\eps}\eps^{-1}$.
	Let $H$ be a balanced bipartite graph of density $\gamma$ and size $2n \geq 2n_0.$ 
	Apply Theorem~\ref{thm:PRR02} to obtain a balanced $(\eps,\gamma/2)$-dense subgraph $[A,B] \subset H$ of size at least
	$\gamma^{12/\eps} n/2.$ Deleting at most $\eps |A|$ vertices on either side, we arrive at a $(2\eps,\gamma/3)$-uniform subgraph $[X,Y] \subset [A,B]$ of size at least 
	$\gamma^{12/\eps}n/3$.
			
	In order to see that $[X,Y]$ is {(1-$\gamma/4$)-Hamiltonian}, delete an arbitrary fraction of at most $\gamma/4<1/4$ vertices from each of $X$, $Y$. 
	Clearly, the obtained subgraph 
	$[X',Y'] $ is $( 3\eps, \frac{\gamma}{12})$-uniform, and has size at least $\gamma^{12/\eps}n_0/4\geq  1/(3\eps)$. 
	Thus Theorem~\ref{thm:Hax97} applies and we are done.
\end{proof}

Finally, we make use of the following lemma due to Gy\'{a}rf\'{a}s et al.
It allows us to absorb small vertex sets with few monochromatic cycles.

\begin{lemma}[Gy\'arf\'as et al.~\cite{GRSS06b}]
	\label{lem:somewhat-unbalanced}
	There is a constant $n_0\in\mathbb N$ such that for  $n \ge n_0$ and $m \leq \frac{n}{(8r)^{8(r+1)}}$, and for any  $r$-colouring of $K_{n,m}$, there are $2r$ disjoint monochromatic cycles covering all $m$ vertices on the smaller side.
\end{lemma}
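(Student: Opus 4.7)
The strategy is to reduce via pigeonhole to covering each ``majority-colour'' class of $B$ separately, and then to cover each such class with at most two monochromatic cycles, exploiting the huge ratio $n/m$.

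First I would apply pigeonhole vertex-by-vertex: for each $b \in B$, some colour $c(b) \in \{1,\dots,r\}$ satisfies that $b$ has at least $n/r$ edges of colour $c(b)$ to $A$. Setting $B_i = \{b \in B : c(b) = i\}$ partitions $B = B_1 \cup \cdots \cup B_r$. It now suffices to cover each $B_i$ with at most two disjoint colour-$i$ cycles using auxiliary vertices of $A$, which gives at most $2r$ cycles in total. Disjointness across the $r$ colour classes will be automatic: each cycle uses at most $|B_i| \le m$ vertices of $A$, so the $2r$ cycles require only $2m = o(n)$ vertices of $A$, and these can be selected greedily to lie in pairwise disjoint subsets of $A$.

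The heart of the proof is then the sub-problem: given $B_i$ of size $s \le m$ in which every vertex has at least $n/r$ colour-$i$ neighbours in $A$, and with $n \ge s\cdot(8r)^{8(r+1)}$, cover $B_i$ with at most two colour-$i$ cycles. My plan here is iterated pigeonhole on the colour pattern of $A$-vertices with respect to small, growing test subsets $T \subset B_i$: each round, we pigeonhole on the pattern of an $a \in A$ over a constant-size test set, losing a factor of roughly $r$ in $|A|$ but pinning down a uniform colour between the retained $A^* \subset A$ and a new vertex of the test set; the factor $(8r)^{8(r+1)}$ in the hypothesis is exactly the budget needed to carry out enough rounds to build a subset $A^* \subseteq A$ of size at least $s$ whose colour-$i$ edges to $B_i$ form a (nearly) complete bipartite graph. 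Once such $A^*$ is in hand, a bipartite Dirac/Moon--Moser type theorem provides a Hamilton cycle of $[A^*, B_i]$ through $B_i$, and if the residual structure genuinely forces a split, the second cycle handles the leftover vertices.

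The main obstacle is exactly this iterated pigeonhole step. A priori, two vertices $b, b' \in B_i$ need not share any colour-$i$ neighbour in $A$, which blocks any naive greedy cycle construction and is also why random sampling of $A^*$ fails (since the density $n/r$ is not enough to force positive common neighbourhoods). The iterated pigeonhole circumvents this by restricting to a subset $A^*$ in which common colour-$i$ neighbourhoods between pairs of $B_i$-vertices are plentiful; the bookkeeping of the shrinkage factors per round, together with the slack provided by the two-cycle budget per colour (which absorbs any genuine splitting imposed by the colour structure), are what yield the final bound $m \le n/(8r)^{8(r+1)}$.
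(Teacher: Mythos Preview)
First, note that the paper does not prove this lemma: it is quoted from \cite{GRSS06b} and used as a black box, so there is no proof here to compare against.

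That said, your proposal has a genuine gap. The central claim---that once $B$ is split by majority colour, each $B_i$ can be covered by at most two colour-$i$ cycles---is false in general. For $r\ge 3$, take $B_i=\{b_1,\dots,b_r\}$, partition $A$ into blocks $A_1,\dots,A_r$ of size $n/r$, and colour the edges from $b_j$ so that those to $A_j$ receive colour $i$ while the remaining $(r-1)n/r$ edges are distributed among the other colours. Each $b_j$ then has exactly $n/r$ colour-$i$ neighbours, so placing it in $B_i$ is legitimate; but the colour-$i$ graph on $[A,B_i]$ has $r$ connected components, one per $b_j$, and any colour-$i$ cycle is confined to a single component. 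Hence $r$ colour-$i$ cycles are needed to cover this $B_i$, not two, and summed over colours your scheme only delivers an $O(r^2)$ bound.

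The ``iterated pigeonhole'' step does not rescue this, and is in fact internally inconsistent as written. You correctly observe that pigeonholing on colour patterns over a test set $T$ pins down \emph{some} uniform colour between the surviving $A^*$ and each vertex of $T$---but that colour need not be $i$, so you cannot then conclude that the ``colour-$i$ edges to $B_i$ form a (nearly) complete bipartite graph''. Separately, the budget $(8r)^{8(r+1)}$ allows only about $8(r+1)$ rounds of shrinking $A$ by a factor of order $r$, so your test set can have size at most $O(r)$; this says nothing about the colour-$i$ neighbourhoods of the remaining $|B_i|-O(r)$ vertices of $B_i$, and $|B_i|$ may be linear in $n$. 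To obtain the bound $2r$ one cannot commit in advance to covering each $B_i$ in its majority colour; you would need to consult \cite{GRSS06b} for the actual argument.
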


\subsection{Proof of Theorem~\ref{thm:LSS}(\ref{itm:LSSb})}
\label{sec:covering-with-18-cycles}
Let $A$ and $B$ be the two partition classes of the $3$-edge-coloured $\kn$.
We assume that $n \ge n_0$, where we specify $n_0$ later.
Pick subsets $A_1 \subseteq A$ and $B_1 \subseteq B$ of size $\lceil n / 2 \rceil$ each.
Say red is the majority colour of $[A_1,B_1]$.
Lemma~\ref{lem:eps-hamiltonian} applied with $\gamma = 1/3$ yields a red {$(1-1/12)$-Hamiltonian} subgraph $[A_2,B_2]$ of $[A_1,B_1]$ with 
\[
	|A_2|=|B_2|\ge  3^{9999} |A_1| \ge 3^{-10^4} n.
\]
Set $H := G - (A_2 \cup B_2)$, and note that each bipart of $H$ has order at least $\lfloor n / 2 \rfloor$.
Let $\delta := 24^{-32} \cdot 3^{-10^4}$.
Assuming $n_0$ is large enough, Theorem~\ref{thm:LSS}(\ref{itm:LSSa}) yields five monochromatic vertex-disjoint cycles  covering all but at most
$2 \delta n$ vertices of $H$.
Let $X_A \subseteq A$ (resp.~$X_B \subseteq B$) be the set of uncovered vertices in $A$ (resp.~$B$).
Since we may assume none of the monochromatic cycles is an isolated vertex, we have $|X_A|=|X_B| \le \delta n$.

By the choice of $\delta$, and since we assume $n_0$ to be sufficiently large, we can apply Lemma~\ref{lem:somewhat-unbalanced} 
to the bipartite graphs $[A_2,X_B]$ and $[B_2,X_A]$.
We obtain a union $\mathcal C$ of twelve vertex-disjoint monochromatic cycles that together cover $X_A \cup X_B$.
As $|X_A|=|X_B| \le \delta n\leq 3^{-10^4}/12$, we know that $[A_2,B_2]-V(\bigcup\mathcal C)$ contains
a red Hamiltonian cycle.
Thus, in total, we covered $G$ with at most $5+12+1=18$ vertex-disjoint monochromatic cycles.

\subsection{A remark on 3-coloured complete graphs}\label{sec:complete-graphs-10-cycles}
The number of 17 cycles needed to partition a 3-coloured complete graph, obtained by Gy\'arf\'as et al.~\cite{GRSS11}, is not expected to be optimal.
By a slight modification of their method, one can replace the number $17$ with (the still not optimal number) $10$.

Erd\H os et al.~\cite{EGP91} have shown that any large enough 3-coloured $K_n$ has a monochromatic \emph{triangle cycle} of linear size.
That is, a union of two cycles $(u_1,u_2,\ldots,u_k,u_1)$ and $(u_1,v_1,u_2,v_2,\ldots,u_k,v_k,u_1)$.
Clearly, after the deletion of an arbitrary subset of the \emph{outer vertices}, $\{v_1,\ldots,v_k\}$, the triangle cycle still has a Hamiltonian cycle.

Given a sufficiently large 3-coloured $K_n$, we proceed as follows.
First we reserve the vertex set of a linear sized monochromatic triangle cycle $T$  for later use.
We cover the remaining graph,  except for some small set $X$, with three vertex-disjoint monochromatic cycles, using the result of Gy\'arf\'as et al.~\cite{GRSS11}.
We then use Lemma~\ref{lem:somewhat-unbalanced} to cover all of $X$ with six vertex-disjoint monochromatic cycles, which use  some of  the outer vertices of $T$ (and $X$).
This can be done since $T$ is of linear size while $|X|$ is a vanishing fraction of $n$.
We finish by covering the remains of $T$ with a monochromatic Hamiltonian cycle.

\bibliographystyle{amsplain}

\begin{thebibliography}{10}
	
	\bibitem{All08}
	P.~Allen, \emph{Covering two-edge-coloured complete graphs with two disjoint
		monochromatic cycles}, Combin. Probab. Comput. \textbf{17} (2008), 471--486.
	
	\bibitem{Aye79}
	J.~Ayel, \emph{Sur l'existence de deux cycles suppl{\'e}mentaires
		unicolor{\'e}s, disjoints et de couleurs diff{\'e}rentes dans un graphe
		complet bicolore}, Ph.D. thesis, L'universit{\'e} de {G}renoble, 1979.
	
	\bibitem{BBG+14}
	J.~Balogh, J.~Bar\'{a}t, D.~Gerbner, A.~Gy\'arf\'as, and G.~S\'ark\"ozy,
	\emph{Partitioning 2-edge-colored graphs by monochromatic paths and cycles},
	Combinatorica \textbf{34} (2014), no.~5, 507--526.
	
	\bibitem{BT10}
	S.~Bessy and S.~Thomass\'e, \emph{Partitioning a graph into a cycle and an
		anticycle: a proof of {L}ehel{'s} conjecture}, J. Combin. Theory Ser. B
	\textbf{100} (2010), 176--180.
	
	\bibitem{DN14}
	L.~DeBiasio and L.~Nelsen, \emph{Monochromatic cycle partitions of graphs with
		large minimum degree}, Preprint (2014).
	
	\bibitem{EGP91}
	P.~Erd\H{o}s, A.~Gy\'arf\'as, and L.~Pyber, \emph{Vertex coverings by
		monochromatic cycles and trees}, J. Combin. Theory Ser. B \textbf{51} (1991),
	90--95.
	
	\bibitem{GG67}
	L.~Gerencs\'er and A.~Gy\'arf\'as, \emph{On {R}amsey-type problems}, Annales
	Univ. E\"otv\"os Section Math. \textbf{10} (1967), 167--170.
	
	\bibitem{Gya83}
	A.~Gy{\'a}rf{\'a}s, \emph{Vertex coverings by monochromatic paths and cycles},
	J. Graph Theory \textbf{7} (1983), 131--135.
	
	\bibitem{GRSS06}
	A.~Gy{\'a}rf{\'a}s, M.~Ruszink{\'o}, G.~N. S{\'a}rk{\"o}zy, and
	E.~Szemer{\'e}di, \emph{An improved bound for the monochromatic cycle
		partition number}, J. Combin. Theory Ser. B \textbf{96} (2006), 855--873.
	
	\bibitem{GRSS06b}
	A.~Gy{\'a}rf\'as, M.~Ruszink\'o, G.~N. S\'ark\"ozy, and E.~Szemer\'edi,
	\emph{One-sided coverings of coloured complete bipartite graphs}, Algorithms
	and Combinatorics \textbf{26} (2006), 133--144.
	
	\bibitem{GRSS07}
	A.~Gy{\'a}rf{\'a}s, M.~Ruszink{\'o}, G.~N. S{\'a}rk{\"o}zy, and
	E.~Szemer{\'e}di, \emph{Three-color {R}amsey numbers for paths},
	Combinatorica \textbf{27} (2007), no.~1.
	
	\bibitem{GRSS11}
	\bysame, \emph{Partitioning $3$-coloured complete graphs into three
		monochromatic cycles}, Electron. J. Combin. \textbf{18} (2011), 16 pp.
	
	\bibitem{GS09}
	A.~Gy\'arf\'as and G.~S\'ark\"ozy, \emph{Stars versus two stripes {R}amsey
		numbers and a conjecture of {S}chelp}, Combinatorics, Probability, and
	Computing \textbf{309} (2009), no.~4590--4595.
	
	\bibitem{Hax97}
	P.~E. Haxell, \emph{Partitioning complete bipartite graphs by monochromatic
		cycles}, J. Combin. Theory Ser. B \textbf{69} (1997), 210--218.
	
	\bibitem{KSS97}
	J.~Koml{\'o}s, G.~S{\'a}rk{\"o}zy, and E.~Szemer{\'e}di, \emph{{B}low-{U}p
		{L}emma}, Combinatorica \textbf{17} (1997), no.~1, 109--123.
	
	\bibitem{LS16}
	R.~Lang and M.~Stein, \emph{Local colourings and monochromatic partitions in
		complete bipartite graphs}, European Journal of Combinatorics \textbf{60}
	(2017), 42--54.
	
	\bibitem{Let15}
	S.~Letzter, \emph{Monochromatic cycle partitions of 2-coloured graphs with
		minimum degree 3n/4}, Preprint (2015).
	
	\bibitem{Let16}
	\bysame, \emph{Monochromatic cycle partitions of 3-coloured complete graphs},
	In preparation (2016).
	
	\bibitem{Lim16}
	A.~Lima, \emph{V\'{e}rtice-particionamentos de grafos aresta-coloridos em
		caminhos e ciclos monochrom\'{a}ticos}, Master's thesis, Universidade Federal
	do Cear\'{a}centro de Ci\^{e}ncias, 2016.
	
	\bibitem{Luc99}
	T.~{\L}uczak, \emph{${R}({C}_n ,{C}_n ,{C}_n ) \le (4 + o(1))n$}, J. Combin.
	Theory Ser. B \textbf{75} (1999), 174--187.
	
	\bibitem{LRS98}
	T.~{\L}uczak, V.~R\"odl, and E.~Szemer\'edi, \emph{Partitioning two-coloured
		complete graphs into two monochromatic cycles}, Combin. Probab. Comput.
	\textbf{7} (1998), 423--436.
	
	\bibitem{PRR02}
	Y.~Peng, V.~R{\"{o}}dl, and A.~Ruci{\'{n}}ski, \emph{Holes in graphs},
	Electron.~J.~Combin. \textbf{9} (2002).
	
	\bibitem{Pok14}
	A.~Pokrovskiy, \emph{Partitioning edge-coloured complete graphs into
		monochromatic cycles and paths}, J. Combin. Theory Ser. B \textbf{106}
	(2014), 70--97.
	
	\bibitem{Pok16}
	\bysame, \emph{Partitioning a graph into a cycle and a sparse graph}, Preprint
	(2016).
	
	\bibitem{RRS09}
	V.~R{\"o}dl, A.~Ruci{\'n}ski, and E.~Szemer{\'e}di, \emph{Perfect matchings in
		large uniform hypergraphs with large minimum collective degree}, Journal of
	Combinatorial Theory, Series A \textbf{116} (2009), no.~3, 613--636.
	
	\bibitem{SS14}
	O.~Schaudt and M.~Stein, \emph{Partitioning two-coloured complete multipartite
		graphs by monochromatic paths}, Preprint 2014.
	
	\bibitem{Sze76}
	E.~Szemer{\'e}di, \emph{Regular partitions of graphs}, Colloq. Internat. CNRS
	\textbf{260} (1976), 399--401.
	
\end{thebibliography}

\providecommand{\bysame}{\leavevmode\hbox to3em{\hrulefill}\thinspace}
\providecommand{\MR}{\relax\ifhmode\unskip\space\fi MR }
\providecommand{\MRhref}[2]{%
	\href{http://www.ams.org/mathscinet-getitem?mr=#1}{#2}
}
\providecommand{\href}[2]{#2}

\end{document}